\def\L{\Lambda}
\def\b{\partial}
\def\bt{\partial_{\textsc{n}}}
\def\bl{\partial_{\textsc{w}}}
\def\br{\partial_{\textsc{e}}}
\def\PHB{P_\textsc{hb}}
\def\PMHB{P_\textsc{mhb}}
\def\QMHB{Q_\textsc{mhb}}
\def\Z{\mathbb{Z}}
\def\P{\mathbb{P}}
\def\integers{\mathbb{Z}}
\def\R{\mathbb{R}}
\def\N{\mathbb{N}}
\def\E{\mathrm{E}}
\def\B{\mathcal{B}}
\def\Var{\mathrm{Var}}
\def\e{\mathrm{e}}
\def\Tcoup{T_{\rm coup}}
\newcommand{\tightoverset}[2]{%
	\mathop{#2}\limits^{\vbox to -.5ex{\kern-1.37ex\hbox{$#1$}\vss}}}
\def\L{\Lambda}
\def\b{\partial}
\def\PHB{P_\textsc{fk}}
\def\PMHB{P_\textsc{mhb}}
\def\MHB{\textsc{mhb}}
\def\Z{\mathbb{Z}}
\def\R{\mathbb{R}}
\def\E{\mathrm{E}}
\def\Var{\mathrm{Var}}
\def\e{\mathrm{e}}
\newcommand{\llb}{[\![}
\newcommand{\rrb}{]\!]}
\newcommand{\north}{{\textsc{n}}}
\newcommand{\south}{{\textsc{s}}}
\newcommand{\east}{{\textsc{e}}}
\newcommand{\west}{{\textsc{w}}}
\newcommand{\ext}{\textsc{ext}}
\newcommand{\interior}{\textsc{int}}
\newcommand{\tmix}{t_{\textsc{mix}}}
\newcommand{\gap}{\mathrm{gap}}
\newcommand{\tv}{{\textsc{tv}}}
\newcommand{\inner}[3]{\langle #1 , #2 \rangle_{#3}}
\newtheorem{lemma}{Lemma}[section]
\newtheorem{theorem}[lemma]{Theorem}
\newtheorem{proposition}[lemma]{Proposition}
\newtheorem{claim}[lemma]{Claim}
\newtheorem{fact}[lemma]{Fact}
\theoremstyle{remark}
\newtheorem{remark}{Remark}
\theoremstyle{definition}
\newtheorem{definition}[lemma]{Definition}
\title{Random-cluster dynamics in~$\Z^2$: rapid mixing with general boundary conditions}
\author{
	Antonio Blanca\thanks{School of Computer Science, Georgia Tech, Atlanta, GA 30332.
		Email: {\textrm \{ablanca,vigoda\}@cc.gatech.edu}.}
	\and
	Reza Gheissari \thanks{Courant Institute of Mathematical Sciences, New York University, New York, NY 10003. Email: {\textrm reza@cims.nyu.edu}}
	\and
	Eric Vigoda$^*$
	}
\begin{document}
	
	\maketitle	
\vspace{-.6cm}
\begin{abstract} 
\noindent The random-cluster model with parameters $(p,q)$ is a random graph model that generalizes bond percolation ($q=1$) and the Ising and Potts models ($q\geq 2$). We study its Glauber dynamics on $n\times n$ boxes $\L_{n}$ of the integer lattice graph $\mathbb Z^2$, where the model exhibits a sharp phase transition at $p=p_c(q)$. Unlike traditional spin systems like the Ising and Potts models, the random-cluster model has non-local interactions. 
	Long-range interactions can be imposed as external connections in the boundary of $\L_n$, known as \textit{boundary conditions}.
	For select boundary conditions that do not carry long-range information (namely, wired and free), Blanca and Sinclair proved that when $q>1$ and $p\neq p_c(q)$, the Glauber dynamics on $\L_n$ mixes in optimal $O(n^2 \log n)$ time. In this paper, we prove that 
	this mixing time is polynomial in $n$
	for every boundary condition that is \emph{realizable} as a configuration on $\mathbb Z^2 \setminus \L_{n}$.
	We then use this to prove near-optimal $\tilde O(n^2)$ mixing time for ``typical'' boundary conditions. As a complementary result, we construct classes of non-realizable (non-planar) boundary conditions inducing slow (stretched-exponential) mixing at~$p\ll p_c(q)$. 
\end{abstract}

\section{Introduction}

The \textit{random-cluster model} \cite{FK} is a well-studied 
generalization of independent bond percolation, with connections to the study of electrical networks and random spanning trees~\cite{Grimmett}.
Perhaps most importantly, the random-cluster model is closely related to the \textit{ferromagnetic Ising/Potts model},
which is the classical model for spin systems in statistical physics. In fact, the random-cluster model is often referred to as the FK-representation of this model
and is 
a key tool in the analysis of its phase transition; see, e.g., the recent breakthroughs on the infinite 2-dimensional
integer lattice graph $\integers^2$~\cite{BD1,DST,DGHMT}. It also plays an indispensable role in
the design of efficient
Markov Chain Monte Carlo (MCMC) algorithms, like the Swendsen-Wang algorithm~\cite{SW}, for studying the Ising/Potts model.

For a graph $G=(V,E)$ and parameters $p\in (0,1)$ and $q>0$, random-cluster configurations are subsets of edges in $\Omega = \{S\subseteq E\}$, with the probability of $S\subset E$ given by
\begin{align}\label{eq:pi}
\pi_{G,p,q}(S) = \frac 1Z p^{|S|}(1-p)^{|E\setminus S|}q^{c(S)}\,,
\end{align}
where $c(S)$ is the number of connected components (including isolated vertices) in the subgraph 
$(V,S)$, and the \textit{partition function} $Z=Z_{p,q}$ is the normalizing 
constant that makes $\pi_{G,p,q}$ a probability measure.  

In addition to being interesting in its own right as a random graph and dependent percolation model, the random-cluster model provides a unifying framework for the study
of several important probabilistic models. 
For example, for \textit{integer} $q \ge 2$ the random-cluster model is dual, in a precise sense, to the $q$-state Ising/Potts model, where
configurations are assignments of spin values $\{1,\dots,q\}$ to the vertices of $G$. Each configuration $\sigma \in [q]^{V}$ has probability $\mu_G(\sigma)\propto \exp(\beta H(\sigma))$,
where $H(\sigma)$ is the number of edges connecting vertices with the same spin values,
and $\beta > 0$ is a model parameter associated with the inverse temperature of the system.
When $\beta=-\ln(1-p) > 0$, 
it is straightforward to check 
via a probabilistic coupling \cite{ES} that 
correlations in the Ising/Potts model correspond to connectivities 
in the random-cluster setting; this has illuminated much of the study of these models
and 
has made 
the random-cluster model
an accepted generalization of the Ising/Potts model to non-integer values of~$q$.

The random-cluster model, however, is not a spin system in the usual sense, as 
the weight of a configuration $S$
is not a function of local interactions between edges in $G$, but instead of global connectivity properties.  
This non-local structure is a crucial feature of the model but significantly complicates its analysis. The present paper studies the influence of this non-locality on the speed of convergence of the Glauber dynamics (i.e., local Markov chains) on subsets of $\mathbb Z^2$, where the model has been of particular interest. 

On the infinite graph $\mathbb Z^2$, both the random-cluster and the Ising/Potts model undergo phase transitions
corresponding to the sudden
emergence of long-range correlations as some parameter of the system is continuously varied  ($p$ and $\beta$ in this case).
A classical result of Onsager \cite{Onsager} established that the Ising model (the $q=2$ case) undergoes
a phase transition at a critical $\beta = \beta_c(2) = \ln(1+\sqrt{2})$. 
Recently it was shown in a celebrated result
of Beffara and Duminil-Copin \cite{BD1} that the Potts model ($q\geq 3$ integer),
and more generally the random-cluster model for any $q>1$, 
also undergo phase transitions at the critical points $\beta_c(q) = \ln(1+\sqrt{q})$ and $p_c(q) = \sqrt{q}/(\sqrt{q}+1)$, respectively. Note that $\beta_c(q) = -\ln(1-p_c(q))$.
These phase transitions can also be understood as transitions in the number of (unique vs.\ multiple) 
infinite-volume Gibbs measures on $\integers^2$.  
The infinite-volume measures on $\Z^2$ are obtained by taking limits of the distributions
on finite boxes with a sequence of ``boundary conditions'', described below.

We begin with the notion of boundary conditions in the Ising/Potts model.
Let $\L_n$ be an $n\times n$ square region of $\Z^2$ with nearest-neighbor edges $E(\L_n)$, and let 
$\b\L_n$ be its (inner) boundary (i.e., 
those vertices in $\L_n$ that are adjacent to vertices in $\Z^2 \setminus \L_n$).
An Ising/Potts boundary condition $\tau$ is a fixed assignment of spins to $\b\L_n$, and 
$\mu^\tau_{\L_n}$ is the Gibbs distribution on $\L_n$ conditional on the  
assignment $\tau$ to $\b\L_n$. (Since the interactions are nearest-neighbor, this is the same as conditioning on a configuration on all of $\mathbb Z^2 \setminus \L_n$.) 

For the random-cluster model on $\L_n$, a boundary condition $\xi$ on $\b \L_n$ is  
a partition $\{\xi_1,\xi_2,...\}$ of the boundary vertices such that all vertices in $\xi_i$ are always in the same connected component of a configuration $S$ via ``ghost'' (or external) wirings; these connections are considered in the counting of $c(S)$ in~(\ref{eq:pi}) and can therefore impose highly non-local interactions. 
Of particular interest are boundary conditions for the random-cluster model corresponding
to configurations on $\Z^2 \setminus \L_n$: i.e., where the boundary partition is induced by the connections of a random-cluster configuration on $E(\mathbb Z^2)\setminus E(\L_n)$. We call such boundary conditions {\em 
	realizable}.  (In fact, many works, including the standard text~\cite{Grimmett}, often restrict attention to 
realizable boundary conditions.) 
We note that boundary conditions are fundamental to the study of static and dynamic properties of the random-cluster and Ising/Potts models, especially in finite subsets of $\Z^2$.

In this paper we consider 
the Glauber dynamics for the random-cluster model on
the finite subgraph $(\L_n,E(\L_n))$ of $\Z^2$, in the presence of boundary conditions. 
This Markov chain
is of significant interest: it provides a simple Markov chain Monte Carlo (MCMC) algorithm for sampling
configurations of the system; and is, in many cases, a plausible model for the evolution
of the underlying system.

Specifically, we consider
the following discrete-time variant of the Glauber dynamics chain, which we refer to as the {\em FK-dynamics}.
For $t\in \mathbb N$, from $S_t\subseteq E(\L_n)$, transition to $S_{t+1}\subseteq E(\L_n)$ as follows:
\begin{enumerate}
	\item Choose an edge $e\in E(\L_n)$ uniformly at random;
	\item let $S_{t+1} = S_t \cup \{e\}$ with probability
	$$
	\frac{\pi_{\L_n,p,q}(S_t \cup \{e\})}{\pi_{\L_n,p,q}(S_t \cup \{e\})+\pi_{\L_n,p,q}(S_t \setminus \{e\})} = \left\{\begin{array}{ll}
	\frac{p}{q(1-p)+p} & \mbox{if $e$ is a ``cut-edge'' in $(\L_n,S_t)$;} \\
	p & \mbox{otherwise;}
	\end{array}\right.
	$$
	\item else let $S_{t+1} = S_t \setminus \{e\}$.
\end{enumerate}
We say $e$ is a {\it cut-edge} in $(\L_n,S_t)$ if the number of connected components in $S_t\cup \{e\}$ and $S_t \setminus \{e\}$ differ.  Under a boundary condition $\xi$, the property of $e$ being a cut-edge is defined with respect to the augmented graph $(\L_n,S^{\xi}_t)$, where $S^\xi_t$ adds external wirings between all pairs of vertices in the same element of $\xi$. 
This Markov chain converges to $\pi_{\L_n,p,q}$ by construction, and we study 
its speed of convergence.

A standard measure for quantifying the speed of convergence of a Markov chain is the \textit{mixing time}, which is defined as the time until the dynamics is close (in total variation 
distance) to its stationary distribution, starting from a worst-case initial state.
We say the dynamics is {\em rapidly mixing} if the mixing time is polynomial in $|V|$, and {\em torpidly mixing} when the mixing time is exponential 
in $|V|^\varepsilon$ for some~$\varepsilon>0$.

The corresponding Glauber dynamics for the Ising/Potts model (which updates spins one at a time according to the spins of their neighbors), is by now quite well understood on finite regions of $\mathbb Z^2$.  In the high-temperature region $\beta<\beta_c$ (corresponding to $p<p_c$)
the Glauber dynamics has 
optimal mixing time $\Theta(n^2\log{n})$ on boxes $\L_n$~\cite{MOS,Cesi,BD1,Alexander}; moreover, this same
asymptotic bound of $\Theta(n^2\log{n})$ holds for every fixed boundary condition.
These bounds follow as a consequence of the exponential decay of correlations
of the model in the high-temperature regime, which 
holds even near the boundary for arbitrary Ising/Potts boundary conditions; this property is known as \emph{strong spatial mixing}. 
In the low-temperature region $\beta>\beta_c$
the mixing time is exponential in $n$ for free and periodic (toroidal) boundaries~\cite{Thomas, BCT, GL1}.
The more general problem of understanding the mixing time of the Glauber dynamics
for other boundary conditions
at low temperatures is a long-standing open problem, e.g., see~\cite{LMST,MaTo}.

The FK-dynamics is quite powerful 
since
the self-duality of the model on $\mathbb Z^2$ implies that 
it is rapidly mixing in the low-temperature regime where
the Ising/Potts Glauber dynamics is torpidly mixing. For the FK-dynamics on $\L_n$,~\cite{BS}
showed that the mixing time is $\Theta(n^2\log{n})$ for all $q>1$ whenever $p\neq p_c(q)$;
see also~\cite{GS} for recent results concerning the cutoff phenomenon in the FK-dynamics. 
(At the critical $p=p_c(q)$ 
the FK-dynamics
may exhibit torpid mixing depending on the
``order'' of the phase transition~\cite{GL1,GL2}.)
Since the proof in~\cite{BS} (as well as in~\cite{GS}) used a strong spatial mixing property for the random-cluster model, the $\Theta(n^2 \log n)$ upper bound 
only holds under boundary conditions that are free (no boundary condition), wired (all boundary vertices are connected to one another) or periodic (the torus). Note, crucially, that these boundary conditions do not carry information about random-cluster connectivities in non-local ways: namely, configurations in different regions of $\L_n$ do not interact through these boundaries. The behavior of the FK-dynamics under arbitrary random-cluster boundary conditions remained unclear.

We prove that the FK-dynamics at $p\neq p_c(q)$ is rapidly mixing for all realizable boundary conditions.

\begin{theorem}\label{thm:planar-mixing:intro}
	For every $q> 1$, $p\neq p_c(q)$, there exists a constant $C>0$ such that the mixing time of the FK-dynamics on the $n\times n$ box $\L_n\subset \integers^2$ with any realizable boundary condition is~$O(n^{C})$.
\end{theorem}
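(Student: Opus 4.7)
The plan begins with two standard reductions. First, by self-duality of the random-cluster model on $\Z^2$, the FK-dynamics at $p$ with a realizable BC $\xi$ on $\L_n$ is mapped to an FK-dynamics at the dual parameter $p^*$ on a comparably-sized dual box with a BC that is again realizable (the dual of a planar external configuration is planar). Since $p^*$ lies on the opposite side of $p_c(q)$ from $p$, this lets us restrict attention to one regime, say $p > p_c(q)$. Second, the FK-dynamics is monotone under the natural coupling for $q \geq 1$, so its mixing time is bounded up to constants by the expected coalescence time of the grand coupling started from the all-empty and all-full configurations. The goal becomes bounding this coalescence time, uniformly over realizable $\xi$.

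\textbf{Recursive block dynamics preserving realizability.} For the core argument, I would analyze a heat-bath block dynamics. Decompose $\L_n$ into a constant number of overlapping sub-rectangles $B_1,\ldots,B_k$ of side $L = n^{1-\varepsilon}$ (for a small $\varepsilon > 0$ to be tuned), with overlaps of width $\Theta(\log n)$. At each step, pick $B_i$ uniformly at random and resample its edges from the random-cluster conditional measure given the current configuration on $\L_n \setminus B_i$ and the BC $\xi$. The key structural observation is that the induced BC on $B_i$ is again realizable: it comes from the planar random-cluster configuration on $\Z^2 \setminus B_i$ consisting of $\xi$ together with the current edges in $E(\L_n) \setminus E(B_i)$. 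Hence each block update can be implemented, up to total-variation error $1/\poly(n)$, by running the FK-dynamics on $B_i$ with a realizable BC for $T(L)$ steps, where $T$ is the mixing-time function we are bounding inductively. If the block dynamics coalesces in $N(n)$ block updates, one obtains the recursion $T(n) \leq N(n) \cdot T(n^{1-\varepsilon})$, which, with the ansatz $T(n) \leq A n^{C}$ and $C = c/\varepsilon$, is self-consistent as soon as $N(n) = O(n^{c})$ and thus yields the desired polynomial bound.

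\textbf{Main obstacle: spatial mixing uniformly over realizable BCs.} The technical heart is then to establish $N(n) = \poly(n)$, which, via a path-coupling analysis of the monotone block dynamics, reduces to a spatial mixing estimate: the effect of $\xi$ on the conditional distribution inside a block $B_i$ must decay polynomially in the distance to $\partial \L_n$, uniformly over realizable $\xi$. At $p \neq p_c(q)$ such decay is known for the free and wired BCs via the sharpness and exponential-decay-of-connections results of Beffara--Duminil-Copin and Alexander, and indeed this is what powers the analysis of \cite{BS}. The main obstacle is extending such a decay to arbitrary realizable $\xi$, whose external wirings may couple boundary points at arbitrarily large distance. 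Planarity of $\xi$ is the essential tool here: the wirings form a non-crossing partition, so the influence of any external wiring on an interior event requires a random-cluster connection between the two wired arcs, whose probability can in turn be controlled by dual disconnection estimates (applied to the subcritical dual when $p > p_c(q)$). This uniform-in-$\xi$ spatial mixing bound, fed back into the monotone block-dynamics coupling, gives $N(n) = \poly(n)$, closes the recursion, and yields the desired $T(n) = O(n^{C})$.
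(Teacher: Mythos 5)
Your proposal has the right skeleton (duality, monotonicity, realizability of induced boundary conditions on sub-boxes, planarity as the essential structural tool), and the overall strategy of a recursive block dynamics is indeed what the paper uses. But the central claim on which your recursion rests --- that the influence of a realizable $\xi$ on the conditional law inside a block ``decays polynomially in the distance to $\partial\L_n$, uniformly over realizable $\xi$'' --- is false, and the paper says so explicitly. A realizable boundary condition can wire two vertices of $\partial\L_n$ at distance $\Theta(n)$; for edges near such a wiring, the influence of $\xi$ does not decay at all, no matter how subcritical $p$ is. Decay of correlations does hold for edges in the bulk (at distance $\Theta(\log n)$ from $\partial\L_n$), but that leaves an annulus of width $\Theta(\log n)$ along $\partial\L_n$ where your path-coupling estimate breaks down completely. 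Your recursion with sub-boxes of side $n^{1-\varepsilon}$ never escapes this problem: at every scale, the boxes that touch $\partial\L_n$ inherit $\xi$'s long-range wirings, and there is no base case where the boundary has been tamed. In particular, for two blocks $B_\west$, $B_\east$ overlapping near $\partial\L_n$, there may be no configuration on $B_\west\cap B_\east$ that disconnects the influence of one from the other, so the monotone block-dynamics coupling never coalesces (this is precisely Figure~\ref{figure:sketch:blocks}(a) in the paper).

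The paper's resolution has two pieces, both absent from your plan. First, one replaces strong spatial mixing with the weaker \emph{moderate} spatial mixing (Definition~\ref{def:msm}), holding for a collection of blocks that includes a single thin annulus $R$ of width $\Theta(\log n)$ along $\partial\L_n$: MSM asks only that for each edge there be \emph{some} block insulating it, and for edges near the boundary that block is $R$ (so no decay near $\partial\L_n$ is needed). Second, one must separately bound the mixing time of the FK-dynamics on thin rectangles $n\times\Theta(\log n)$ with arbitrary realizable boundary conditions (Theorem~\ref{thm:main-thin-rect}); this is the technical heart. That proof cannot use a fixed geometric decomposition into two overlapping rectangles as you propose --- for the same reason as above --- so the paper instead designs a recursive block dynamics whose two blocks are \emph{groups of rectangles}, chosen as a function of $\xi$ so as to be ``compatible'' with it (via disconnecting intervals), guaranteeing that the coupling can close in $O(1)$ block updates while the widths shrink geometrically. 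This $\xi$-dependent block choice, and the accompanying compatibility calculus, is the genuinely new content of the theorem and is what your proposal is missing.
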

We pause to comment on the proof of Theorem~\ref{thm:planar-mixing:intro}. As mentioned above, proofs of fast mixing when $p\neq p_c(q)$ have relied crucially on a strong spatial mixing property, which in the random-cluster model, would say that correlations between edges (even near the boundary $\partial \L_n$) decay exponentially in the graph distance between them. It is easy to construct examples of realizable boundary conditions where this correlation does not decay at all, even if $p\ll p_c(q)$, as the boundary can enforce long-range interactions. 
Since the exponential decay of correlations does hold for edges in the ``bulk'' of $\L_n$ (i.e, at distance $\Theta(\log n)$ away from its boundary), 
we are able to reduce the proof of Theorem~\ref{thm:planar-mixing:intro} to proving a polynomial upper bound 
for the mixing time of the FK-dynamics
on thin rectangles 
of dimension $n\times \Theta(\log n)$
with realizable boundary conditions. 
This will be the key technical difficulty for us and is established in Theorem~\ref{thm:main-thin-rect}. 

In the setting of spin systems, or boundary conditions that do not encode long-range interactions, a polynomial upper bound on $n \times \Theta(\log n)$ rectangles would follow from standard canonical paths arguments~\cite{Martinelli-SP,Martinelli-notes,JS,Sinclair}. However, even realizable boundary conditions can heavily distort the graph with external wirings, preventing this approach from succeeding.
Instead, to prove Theorem~\ref{thm:main-thin-rect} we devise a novel application of a recursive 
(block dynamics) 
scheme
common in the analysis of spin systems. 
In lieu of splitting rectangles into two overlapping sub-rectangles, e.g., the first two-thirds and the second two-thirds, as is done for spin systems, our choice of smaller subsets at every step of the recursion is delicately dictated by the boundary conditions. As a consequence, the subsets we recurse over are no longer restricted to rectangles, but can be arbitrary ``groups of rectangles'' that are ``compatible'' with the realizable boundary conditions. We point the reader to Section~\ref{sec:thin-rect} for a more detailed proof overview.

Theorem~\ref{thm:planar-mixing:intro} shows a polynomial upper bound on the mixing time, uniformly over all
realizable boundary conditions.  Utilizing this theorem
we prove near-optimal $O(|V|(\log{|V|})^C)$ mixing time for ``typical'' boundaries as we detail now. The notion of typicality should be understood as with high probability under some probability distribution over realizable boundary conditions, with a natural choice being the marginal distribution of the infinite random-cluster measure $\pi_{\Z^2,p,q}$ on $\mathbb Z^2 \setminus \L_n$. 

As mentioned earlier, a key obstacle to proving mixing time upper bounds are long, distinct boundary connections that enforce long-range correlations. 
For any realizable boundary condition $\xi$ corresponding to a partition $\{\xi_1,\xi_2,...\}$ of $\b \L_n$, let $L(\xi_i)$ be the smallest connected subgraph of $\b \L_n$ containing all vertices in $\xi_i$. 
The key class of boundary conditions we consider are those where all the $L(\xi_i)$
are small.

\begin{definition}\label{def:a-localized}
	We say a boundary condition $\xi$ on $\b \L_n$ with corresponding partition $\{\xi_1,\xi_2,...\}$ is in $\mathcal C_\alpha$ if $\max_i |L(\xi_i)|\leq \alpha \log n$.  We say that a realizable boundary condition $\xi$ is in $\mathcal C_\alpha^\star$ if its dual boundary condition $\xi^\star$ is in $\mathcal C_\alpha$; see Section \ref{sec:prelim} for the definition of dual configuration. 
	We refer to the classes $\mathcal C_\alpha$ and $\mathcal C_\alpha^\star$ as {\em $\alpha$-localized} boundaries. \end{definition}

\noindent It is straightforward to see that if one samples a ``random'' boundary condition from the infinite-volume measure $\pi_{\Z^2,p,q}$, then with high probability, the
induced boundary condition on $\partial \L_n$ is $\alpha$-localized for some $\alpha>0$.  Since there
is a unique random-cluster measure on $\integers^2$ when $p\neq p_c(q)$,
this is well-defined.  

\begin{theorem}\label{thm:random-boundary}
	For all $q>1$, $p \neq p_c(q)$, let $\omega$ be a random-cluster configuration sampled from $\pi_{\Z^2,p,q}$. 
	If $\xi_\omega$ is the boundary condition on $\b\L_n$
	induced by the connections of $\omega$ in $E(\Z^2) \setminus E(\L_n)$,
	then,  with probability $1-o(1)$,
	$\xi_\omega$ is $\alpha$-localized for $\alpha>0$ sufficiently large.
\end{theorem}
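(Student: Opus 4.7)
The plan is to exploit the exponential decay of connectivities in the unique infinite-volume random-cluster measure $\pi_{\Z^2,p,q}$, which holds whenever $q>1$ and $p\neq p_c(q)$ by the sharpness of the phase transition. For $p<p_c(q)$, the decay is applied directly to $\omega$; for $p>p_c(q)$, I would pass to the dual configuration $\omega^\star\sim\pi_{(\Z^2)^\star,p^\star,q}$, whose dual parameter $p^\star=q(1-p)/(q(1-p)+p)$ lies strictly below $p_c(q)$, and rerun the same argument.

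Consider first the case $p<p_c(q)$. Sharpness yields constants $C,c>0$, depending only on $p$ and $q$, such that $\pi_{\Z^2,p,q}(u\leftrightarrow v)\leq Ce^{-c\|u-v\|_1}$ for all $u,v\in\Z^2$. Summing this bound over the annulus $\{v:\|u-v\|_1\geq K\}$ shows that the $\omega$-cluster of any fixed vertex $u$ has $\Z^2$-diameter at least $K$ with probability only $O(Ke^{-cK})$. Choosing $K=\alpha'\log n$ with $\alpha'$ sufficiently large and union-bounding over the $O(n)$ vertices of $\b\L_n$, the probability that some boundary vertex has $\omega$-cluster of $\Z^2$-diameter at least $K$ is $o(1)$.

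On the complementary event, fix $u\in\b\L_n$ and let $\xi_i$ be the partition class of $\xi_\omega$ containing it. Since $\omega|_{E(\Z^2)\setminus E(\L_n)}$ is a subgraph of $\omega$, the cluster of $u$ in the restricted configuration is a subset of its full $\omega$-cluster, so $\xi_i$ is contained in the $\Z^2$-ball of radius $K$ centered at $u$. A short geometric observation shows that the intersection of such a ball with the perimeter cycle $\b\L_n$ is a single connected arc of length $O(K)$, even when the ball straddles a corner of $\L_n$. Hence $|L(\xi_i)|=O(K)\leq \alpha\log n$ after enlarging $\alpha$ to absorb absolute constants, and so $\xi_\omega\in\mathcal{C}_\alpha$ with probability $1-o(1)$.

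The case $p>p_c(q)$ is handled identically after passing to the dual: by sharpness in the dual measure, $\omega^\star$ satisfies the analogous exponential decay of connectivities, and the partition $\xi_\omega^\star$ induced by $\omega^\star$ on the dual boundary of $\b\L_n$ is in $\mathcal{C}_\alpha$ with high probability, which is precisely $\xi_\omega\in\mathcal{C}_\alpha^\star$. The argument is short and largely routine once sharpness is invoked; no serious obstacle is expected. The only mildly delicate points are the geometric claim that the restriction of an $\ell_1$-ball to the boundary cycle is a short connected arc (including at corners), and the formal set-up of planar duality linking $\xi_\omega$ to $\xi_\omega^\star$ in the supercritical regime.
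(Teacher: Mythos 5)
Your proposal is correct and follows essentially the same route as the paper: reduce to $p<p_c(q)$ by planar duality, apply the exponential decay of connectivities from~\eqref{eq:EDC} to the unique infinite-volume measure, and union bound over the boundary. The paper union-bounds directly over all $O(n^2)$ pairs of boundary vertices at $\ell_1$-distance $\geq \alpha\log n$, whereas you union-bound the cluster-diameter event over the $O(n)$ boundary vertices and then supply the (correct, and in the paper implicit) geometric observation that a ball of radius $O(\log n)\ll n$ meets the boundary cycle in a single short arc, so that a $\Z^2$-diameter bound on $\xi_i$ does translate into a bound on $|L(\xi_i)|$; this is a cosmetic difference, not a different argument.
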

There are other similar ways of defining typicality of realizable boundary conditions that may also be of interest. For example, if $\omega$ were sampled from the random-cluster measure in a large concentric box containing $\L_n$, with probability $1-o(1)$, $\xi_\omega$ would again be $\alpha$-localized; see Remark~\ref{rem:finite-typicality}.

We are able to prove nearly-optimal mixing time $\tilde{O}(n^2)$ for $\alpha$-localized boundaries, and 
hence, with high probability, for a random boundary sampled from \emph{any} off-critical random-cluster measure.

\begin{theorem}\label{thm:typical-mixing:intro}
	For every $q>1$, $p < p_c(q)$ (resp., $p>p_c(q)$), and every $\alpha>0$, there exists a constant $C>0$ such that for every realizable boundary condition $\xi\in \mathcal C_\alpha$ (resp., $\xi\in \mathcal C_\alpha^\star$) on $\partial \L_n$, the mixing time of the FK-dynamics on the $n\times n$ box 
	$\L_n$ with boundary condition $\xi$ is $O(n^2(\log{n})^C)$.
\end{theorem}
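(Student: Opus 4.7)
The plan is to combine the polynomial mixing bound of Theorem~\ref{thm:planar-mixing:intro} with a block dynamics argument, exploiting $\alpha$-localization to ensure that each individual block update has only poly-logarithmic cost and that the block dynamics itself contracts at a fast rate. By planar duality of the FK-dynamics, it suffices to treat the sub-critical case $p < p_c(q)$ with $\xi \in \mathcal{C}_\alpha$; the complementary case $p > p_c(q)$ with $\xi \in \mathcal{C}_\alpha^\star$ then follows by dualizing.

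I would tile $\L_n$ by an overlapping family of blocks of side-length $\ell = K\alpha \log n$, with $K$ a large constant, and consider the block dynamics which at each step selects a uniformly random block and replaces its configuration by an exact draw from the conditional measure given the edges outside the block and the boundary condition $\xi$. Since $\xi$ is realizable and the configuration outside each block is fixed, the boundary condition induced on each block is again realizable, and hence by Theorem~\ref{thm:planar-mixing:intro} a single block update can be implemented to within exponentially small error by running the FK-dynamics inside the block for $O(\ell^{C}) = O((\log n)^{C})$ steps.

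To bound the spectral gap of the block dynamics I would invoke the fact, used already in proving Theorem~\ref{thm:planar-mixing:intro}, that in the sub-critical regime $p < p_c(q)$ random-cluster correlations decay exponentially at distance $\Theta(\log n)$ into $\L_n$ uniformly over realizable boundary conditions in the bulk. Choosing $\ell$ larger than the correlation length by a sufficient constant factor, a standard Martinelli-style block dynamics analysis gives that the block dynamics mixes in $O(\log n)$ sweeps, hence in $O(n^2 \ell^{-2} \log n)$ block updates. Multiplying by the per-block simulation cost and applying a standard comparison inequality to transfer back to the single-edge FK-dynamics then yields the desired $O(n^2 (\log n)^{C'})$ bound.

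The main obstacle will be verifying that $\alpha$-localization of $\xi$ is sufficient to transfer the bulk exponential decay of correlations to the boundary blocks of the tiling. Without any assumption on $\xi$, strong spatial mixing can fail arbitrarily close to $\partial \L_n$ because of long boundary wirings, so the standard route to $O(\log n)$ sweeps-to-mixing is unavailable. The hypothesis $|L(\xi_i)| \le \alpha \log n$ should be leveraged to show that any such wiring only influences a poly-logarithmic region, so that boundary blocks of side $\gtrsim \alpha \log n$ still see a conditional measure enjoying a uniform decay of correlations; making this quantitative in a way compatible with the block-dynamics framework, and carefully aligning the block partition with the geometry of the $L(\xi_i)$, is the heart of the proof.
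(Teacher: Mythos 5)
Your high-level strategy---use $p<p_c$ via duality, run a block dynamics over small boxes, supply each block update via Theorem~\ref{thm:planar-mixing:intro}, and control the block dynamics rate using decay of correlations for $\alpha$-localized boundaries---matches the paper's general framework (which phrases it as ``moderate spatial mixing $+$ local mixing $\Rightarrow$ fast mixing'' via Theorem~\ref{thm:general-mixing}). However, there is a concrete gap in the scale you chose: you propose blocks of side $\ell = K\alpha\log n$, i.e., $\Theta(\log n)$, and this is provably too small. The paper uses boxes $B(e,r)$ of side $r=\Theta((\log n)^2)$, and explains that this extra $\log n$ factor is essential.

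The reason $\Theta(\log n)$ fails is precisely the phenomenon you flag as ``the main obstacle'' but do not resolve. An $\alpha$-localized boundary condition can have many boundary components $\xi_i$ with $|L(\xi_i)|$ as large as $\alpha\log n$, and these may be arranged consecutively along $\partial\L_n$. A constant number of additional open edges near the boundary then chains together $O(1)$ of these wirings and transports influence across a distance $\Theta(\log n)$. So for an edge $e$ near $\partial\L_n$, information can reach $e$ from outside a box of side $\gamma'\log n$ with probability that is not $o(1/n^2)$, no matter how large the constant $\gamma'$; the effective decay rate for $\alpha$-localized boundaries is $\exp(-c\,d/(\alpha\log n))$ rather than $\exp(-c\,d)$. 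To drive the boundary-block influence down to $\delta\lesssim n^{-3}$ (as needed to union-bound over all $\Theta(n^2)$ edges), one needs $\Omega(\log n)$ independent ``bad'' connections, each costing a factor $\exp(-\Omega(\gamma\log n))$, which forces $r=\Omega((\log n)^2)$. This is established quantitatively in the paper's Lemma~\ref{lemma:ssm:typical}, whose proof builds a chain of $\Theta(\log n)$ disconnecting intervals of length $\Theta(\log n)$ and multiplies $\Theta(\log n)$ conditionally independent escape probabilities. Your proposal as written would therefore fail to establish the contraction/conductance estimate it relies on; fixing it requires replacing $\ell=\Theta(\log n)$ with $\ell=\Theta((\log n)^2)$ and carrying out the quantitative chaining argument for the boundary blocks (Theorem~\ref{thm:planar-mixing:intro} still supplies the poly-logarithmic local mixing bound at that scale, so the final exponent $C$ survives).
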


The proof of Theorem~\ref{thm:typical-mixing:intro} uses Theorem~\ref{thm:planar-mixing:intro} in a crucial way. 
Typical boundary conditions do not exhibit the strong spatial mixing property from \cite{BS}; however, for boundary conditions in $C_\alpha$ we are able to prove that correlations between edges near the boundary decay exponentially in their graph distance divided by $\alpha\log n$. 
Using this correlation bound, together with the aforementioned general framework in Section \ref{sec:general} to derive mixing time estimates from spatial mixing properties,
we reduce bounding the mixing time on $\L_n$ with typical boundaries to bounding the mixing time on $\Theta((\log n)^2)\times \Theta((\log n)^2)$ rectangles with arbitrary realizable boundary conditions. Theorem~\ref{thm:planar-mixing:intro} then implies that the mixing time of the FK-dynamics in these smaller rectangles is at most poly-logarithmic in $n$. 
Similar classes of typical boundary conditions were considered in~\cite{GL1,GL2} at $p=p_c(q)$; there, comparison methods were used to disregard the influence of long boundary connections at the expense of super-polynomial factors in the mixing time.

Given that our rapid mixing result for realizable boundaries  relies heavily on the planarity of the boundary connections in $\mathbb Z^2\setminus \L_n$, one may wonder whether rapid mixing holds for all possible FK boundary conditions (including those not realizable as configurations on $\mathbb Z^2 \setminus \L_n$).
We answer this in the negative, showing that there exist (non-realizable) boundaries
for which the FK-dynamics is torpidly mixing even while $p\neq p_c(q)$.
In fact, this torpid mixing holds at $p\ll p_c(q)$,
which may sound especially surprising as correlations in the Gibbs measure $\pi_{\L_n,p,q}$ die off faster as $p$ decreases. 
\begin{theorem}
	\label{thm:lb:intro}
	Let $q>2$. For every $\alpha \in (0,\frac 12]$ and $\lambda > 0$ there exists a boundary condition $\xi$, 
	such that when $p = \lambda n^{-\alpha}$ the mixing time of the FK-dynamics
	on the $n\times n$ box $\L_n$ with boundary condition $\xi$
	is~$
	\exp(\Omega(n^\alpha))
	$.
\end{theorem}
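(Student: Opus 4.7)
The strategy is a standard conductance (Cheeger) bottleneck. By Cheeger's inequality for reversible Markov chains, it suffices to exhibit a non-realizable boundary condition $\xi$ and a subset $A \subseteq \Omega = 2^{E(\L_n)}$ with $\pi_\xi(A) \in [c, 1/2]$ whose edge-boundary in the FK-dynamics transition graph has $\pi_\xi$-measure at most $\exp(-\Omega(n^\alpha))\,\pi_\xi(A)$; this immediately yields $\tmix = \exp(\Omega(n^\alpha))$.

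The construction of $\xi$ exploits non-realizability to engineer a bottleneck that cannot exist under any realizable boundary (by Theorem~\ref{thm:planar-mixing:intro}). I would take $\xi$ to consist of $k = \Theta(n^\alpha)$ wired pairs of boundary vertices placed so that the pairing is maximally non-planar outside $\L_n$. A natural candidate is to pick $2k$ evenly spaced cyclic vertices $w_1,\ldots,w_{2k}$ on $\partial \L_n$ and pair them antipodally as $\{w_i, w_{i+k}\}$, with every other boundary vertex being a singleton; any two such pairs must cross in the exterior, so $\xi$ is non-realizable. The set $A$ is then defined via a global functional of the configuration sensitive to these external wirings --- concretely, a topological invariant $M(S)$ counting how many of the $k$ pairs are joined by an interior path of $S$, with $A = \{S : M(S) \le k/3\}$. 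Single-edge updates can only cross between $A$ and $A^c$ through the valley $\{S : M(S) \approx k/2\}$.

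The two key estimates are then: (i) $\pi_\xi(A)$ and $\pi_\xi(A^c)$ are both at least a positive constant, and (ii) $\pi_\xi(\{M \approx k/2\}) \le \exp(-\Omega(n^\alpha))$. Estimate (ii) is the more routine one: each merger of two boundary blocks via an interior path of length $d$ carries a weight of order $p^{d}q^{-(d-1)}$ relative to the unmerged state, and summing over combinatorial realizations of $\sim k/2$ mergers with $p = \lambda n^{-\alpha}$ and $q > 2$ (so that the $q$-factor contributes a strict per-merger loss) gives a Peierls-style bound of the desired order. The combined conductance bound $\Phi \le \exp(-\Omega(n^\alpha))$ then follows by the usual flow/boundary comparison.

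The principal obstacle is estimate (i): establishing genuine bimodality of $\pi_\xi$ in $M$. Were $\xi$ realizable, the external wirings could be drawn planarly outside $\L_n$ and the merger events would effectively decouple, concentrating $\pi_\xi$ on a single peak (at $M=0$ for $p < p_c$) and eliminating any bottleneck; thus non-realizability must be used essentially to produce a competing phase. The argument here should show that ``many-merger'' configurations partially resolve the crossings of $\xi$ within $\L_n$, producing a gain in $q^{c(S^\xi)}$ that compensates for the $p$-cost of the interior paths. The $q > 2$ assumption enters precisely in balancing these competing factors --- ensuring the $q$-entropy per extra component strictly outweighs the $p$-energy per added edge --- while $\alpha \le 1/2$ controls the subcritical correlation length against the pair spacings, so that the cluster-expansion estimates underlying both (i) and (ii) converge uniformly.
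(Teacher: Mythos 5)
Your proposal has the right outer shell (a conductance/Cheeger bottleneck argument), but the construction of $\xi$ and the proposed bottleneck set are the wrong ones, and the missing ingredient is precisely the one that makes the theorem true. Your ``antipodal pairing'' places each wired pair $\{w_i,w_{i+k}\}$ at graph distance $\Theta(n)$ inside $\L_n$, so joining even a single pair by an interior path costs a factor of order $(p/q)^{\Theta(n)}$; with $p=\lambda n^{-\alpha}\to 0$ this is $\exp(-\Theta(n\log n))$, wildly dominated. Your claim that ``many-merger configurations produce a gain in $q^{c(S^\xi)}$'' has the sign backwards: merging components \emph{decreases} $c(S^\xi)$, and since $q>1$ this \emph{reduces} the weight. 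Both the $p$-factor and the $q$-factor favor fewer mergers, so the merger count $M$ concentrates near $0$ and is not bimodal; your estimate (i), which you flag as the ``principal obstacle,'' is in fact false for this construction, and no cluster expansion will rescue it.

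What is missing is the actual source of slow mixing, which in the paper is the first-order phase transition of the mean-field random-cluster model at $q>2$. The paper's Theorem~\ref{thm:lb:general} is a transfer principle: embed an arbitrary $m$-edge graph $G$ into $\Theta(m)$ \emph{consecutive} boundary vertices of $\b_\north\L_n$ so that the boundary wirings $\xi(G)$ encode the adjacency of $G$ (adjacent pairs on the boundary play the role of edges of $G$); when $p=\lambda m^{-\alpha}$ is small, a Peierls/stochastic-domination estimate (Lemma~\ref{lemma:lb:gadget-tail:sketch}) shows the interior of $\L_n$ almost never creates spurious connections between the encoded vertices, so the dynamics restricted to the embedded edge set is a small perturbation of the FK-dynamics on $G$, and a bottleneck $S_\star$ for $G$ lifts to a bottleneck $A_M$ for a modified heat-bath chain on $\L_n$, which compares to the single-edge FK-dynamics (Lemma~\ref{lemma:gap-comparison:sketch}). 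Taking $G=K_\ell$ with $\ell=\Theta(n^\alpha)$ and $p=\lambda/\ell$ near the mean-field critical window, the known exponential bottleneck of the mean-field model for $q>2$ (from~\cite{GLP,GoJe,BS-MF}) supplies $S_\star$. This is where $q>2$ enters --- the mean-field transition is discontinuous exactly when $q>2$ --- not, as you suggest, as a balance between ``$q$-entropy per component'' and ``$p$-energy per edge.'' Your construction never invokes this phase transition, and without it there is no second mode and hence no bottleneck.
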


Our proof of this theorem is constructive: we take any graph $G$ 
on $m$ edges for which torpid mixing of the FK-dynamics is known at some value of $p(m)< p_c(q)$, and show how to embed $G$ into the boundary of $\L_n$. 
We then develop a procedure to transfer mixing time bounds from $G$ to $\L_n$.
The high-level idea is that for sufficiently small $p(m)$ 
the effect of the configuration away from the boundary is negligible, and so the mixing time of the FK-dynamics on $G$ completely governs the mixing time of FK-dynamics near the boundary $\b\L_n$.
We can then use known torpid mixing results for the mean-field random-cluster model (the case where $G$ is the complete graph) in its critical window at $q>2$~\cite{GoJe, BS-MF,GSV,GLP}.

We remark about the condition $q>2$ in Theorem~\ref{thm:lb:intro}.  In~\cite{GuoJer} it was shown that the mixing time of FK-dynamics when $q=2$ is at most polynomial in the number of vertices on \emph{any} graph and at every $p\in (0,1)$.
It is believed that this rapid mixing holds for all $q\leq 2$; hence 
the requirement $q>2$ appears to be sharp for Theorem~\ref{thm:lb:intro}.  
We believe that the above torpid mixing result may also extend to small, but $\Omega(1)$ values of~$p<p_c(q)$, though our current proof does not allow for this. In principle, one would want to embed a bounded degree graph into $\partial \L_n$, so that its critical point at which it exhibits slow mixing is $\Omega(1)$. Note that there are several examples of bounded degree graphs where torpid mixing is known~\cite{CF,BCFKVV,BCT,GL1,GL3}.

Finally, we remark that mixing times of the FK-dynamics on a graph $G$ 
are comparable, up to polynomial factors in $|E|$ to mixing times of the Chayes-Machta dynamics~\cite{CM}, 
as well as of the Swendsen-Wang algorithm that walks on the FK configurations of $G$ \cite{Ullrich1,Ullrich2,BS-MF}.
By slight adaptations of the comparison results in \cite{Ullrich1,Ullrich2,BS-MF}, our theorems thus provide upper and lower bounds for these non-local dynamics in the presence of FK boundary conditions.

The rest of the paper is organized as follows.
In Section~\ref{sec:prelim}, we formally define various preliminary notions that are used in our proofs. 
In Section~\ref{sec:general}, we introduce our general framework to deduce mixing time estimates on $\L_n$ from spatial and local mixing properties.
We then present  our key rapid mixing result 
for thin rectangles (Theorem~\ref{thm:main-thin-rect}) in Section~\ref{sec:thin-rect}, before completing the proof of Theorem~\ref{thm:planar-mixing:intro} in Section~\ref{section:planar}. This is boosted to nearly-optimal mixing time for typical boundaries (Theorem~\ref{thm:typical-mixing:intro}) in Section~\ref{section:typical}. Finally, the torpid mixing result (Theorem~\ref{thm:lb:intro})
is proved in Section~\ref{section:lb:intro}.

\medskip \noindent \textbf{Acknowledgements.} The authors thank Insuk Seo for detailed comments. The research of A.B.\ and E.V.\ was supported in part by NSF grants CCF-1617306 and CCF-1563838.

\section{Preliminaries: the random-cluster model in $\mathbb Z^2$}\label{sec:prelim}

In this section we introduce a number of definitions, notation, and background results that we will refer to repeatedly. More details and proofs can be found in the books \cite{Grimmett,LP}.
We will be considering the random-cluster model on rectangular subsets of $\mathbb Z^2$ of the form 
$$
\Lambda_{n,l} = \{0,...,n\} \times\{0,...,l\}= \llb 0,n\rrb \times \llb 0,l \rrb\,.
$$
When $n = l$, we use $\L_n$ for $\L_{n,n}$.
For simplicity,
in this preliminary section we shall focus on the $n=l$ case, but
everything stated here holds more generally for rectangular subsets with $n \neq l$.
Abusing notation, we will also use $\L_n$ for the graph $(\L_n, E(\L_n))$ where $E(\L_n)$ consists of all nearest neighbor pairs of vertices in $\L_n$.
We denote by $\b \L_n$ the (inner) boundary of $\L_n$; that is the vertex set consisting of all vertices in $\L_n$ adjacent to vertices in $\mathbb Z^2 \setminus \L_n$. The north, east, south, west,  boundaries of $\L_n$ will be delineated $\bt \L_n,\b_\east \L_n, \b_\south \L_n$ and $\b_\west \L_n$ respectively. 

A \emph{boundary condition} $\xi$ of $\L_n$ is a partition of the vertices in $\b \L_n$.  
When $u, v \in \b\L_n$ are in the same element of $\xi$, we say that they are \textit{wired} in $\xi$.
If there exists a random-cluster configuration $\omega$ on $E(\mathbb Z^2) \setminus E(\L_n)$ such that,
for all $u,v\in \partial \L_n$, $u$ and $v$ are in the same connected component of $\omega$ if and only if they are wired in $\xi$, then
we say that the boundary condition $\xi$ is \emph{realizable}.

\begin{remark}
	Realizable boundary conditions are the most natural class of boundary conditions (see~\cite{Grimmett}) since they enforce the planarity of $\mathbb Z^2$.
	However, non-realizable boundary conditions are still relevant in some cases; for example,
	when considering the random-cluster model on non-lattice graphs such as trees.
	In Section~\ref{section:lb:intro}, we consider the mixing time of the FK-dynamics under non-realizable boundary conditions.
\end{remark}

For $p\in (0,1)$ and $q>0$, the \emph{random-cluster model} on $\L_n$ with a boundary condition $\xi$ is the probability measure over the subsets $S\subseteq E(\L_n)$ given by
$$
\pi_{\L_n,p,q}^\xi (S) = \frac 1Z p^{|S|} (1-p)^{|E(\L_n)\setminus S|} q^{c(S;\xi)},
$$
where $c(S;\xi)$ corresponds to the number of connected components in the augmented graph $(V, S^{\xi})$ and $S^{\xi}$ adds auxiliary edges between all pairs of vertices in $\partial \L_n$ that are in the same element of $\xi$. 
Every subset $S\subseteq E(\L_n)$, can be naturally identified with some edge configuration $\omega:E(\L_n) \to \{0,1\}$ via $\omega(e)=1$ if $e\in S$ ($e$ is \emph{open}) and $\omega(e)=0$ if $e\notin S$ ($e$ is \emph{closed}). 
We sometimes interchange vertex sets with the subgraph they induce; e.g., the random-cluster configuration on a set $R \subset \Z^2$ corresponds to the random-cluster configuration in the subgraph induced by $R$.
We omit the subscripts $p,q$ when understood from context.   \hfill

\medskip  
\noindent \textbf{Monotonicity.} 
Define a partial order over boundary conditions by $\xi \leq \eta$ if the partition corresponding to $\xi$ is \emph{finer} than that of $\eta$. The extremal boundary conditions then, are the \emph{free} boundary where $\xi = \{\{v\}:v\in \partial \L_n\}$, which we denote by $\xi=0$, and the \emph{wired} boundary where $\xi = \{\partial \L_n\}$, denoted by $\xi =1$. 
When $q>1$, the random-cluster model satisfies the following monotonicity in boundary conditions: if $\xi, \eta$ are two boundary conditions on $\partial \L_n$ with $\xi \leq \eta$, then $\pi_{\L_n}^\xi  \preceq \pi_{\L_n}^{\eta}$, where $\preceq$ denotes stochastic domination.

\medskip\noindent
{\bf Planar duality.} Let $\L_n^*=(\L_n^*,E(\L_n^*))$ denote the planar dual of $\L_n$. That is, $\L_n^*$ corresponds to the set of faces of $\L_n$, and for each $e \in E(\L_n)$, there is a dual edge $e^* \in E(\L_n^*)$ connecting the two faces bordering $e$. 
The random-cluster distribution satisfies
$\pi_{\L_n,p,q}(S) = \pi_{\L_n^*,p^*,q}(S^*)$,
where $S^*$ is the dual configuration to $S \subseteq E$ (i.e., $e^* \in S^*$ iff $e \not\in S$), and
$$p^* = \frac{q(1-p)}{q(1-p)+p}\,.$$
Under a realizable boundary condition $\xi$, this distributional equality becomes $\pi_{\L_n,p,q}^{\xi}(S)=\pi_{\L_n^*,p^*,q^*}^{\xi^*}(S)$, where $\xi^*$ is the boundary condition induced by taking the dual configuration of the configuration on $\mathbb Z^2 \setminus \L_n$ identified with $\xi$. 
Notice that $\mathbb Z^2$ is isomorphic to its dual. 
The unique value of $p$ satisfying $p=p^*$, denoted $p_{sd}(q)$, is called the {\it self-dual point}.

\medskip\noindent
{\bf Infinite-volume measure and phase transition.}  A random-cluster measure $\pi_{\Z^2,p,q}$ can be defined on the infinite lattice $\Z^2$ as the limit as $n\to \infty$ of the sequence of random-cluster measures on $n\times n$ boxes with free boundary conditions. The measure $\pi_{\Z^2,p,q}$ exhibits a phase transition corresponding to the appearance of an infinite connected component. That is, there exists a critical value $p = p_c(q)$ such that if $p < p_c(q)$ (resp., $p > p_c(q)$), then, almost surely, all components are finite (resp., there exists an infinite component).
For $q \ge 1$, the exact value of $p_c(q)$ for $\Z^2$ was recently settled in~\cite{BD1}, proving
$$p_c(q) = p_{sd} (q) = \frac{\sqrt{q}}{\sqrt{q}+1}\,.$$

\medskip\noindent
\textbf{Exponential decay of connectivities (EDC).} 
A consequence of the results in \cite{Alexander,BD1} is that for
every $q>1$ and $p<p_c(q)$, there is a $c = c(p,q)>0$ such that for every boundary condition $\xi$ and all~$u,v\in \L_n$,
\begin{align}\label{eq:EDC}
\pi^{\xi}_{\L_n,p,q}(u\stackrel{\L_n}\longleftrightarrow v)\leq {\e}^{-c d(u,v)}\,,
\end{align}
where $d(u,v)$ is the graph distance between $u,v$ in $\mathbb Z^2$ and $u\stackrel{\L_n}\longleftrightarrow v$ denotes that there is an open path between $u$ and $v$ in the FK configuration on $E(\L_n)$ (not using the connections of $\xi$).

\subsection{Random-cluster dynamics}
\label{subsection:prelim:dynamics}
In this section we overview some preliminaries related to Markov chain mixing times and the FK dynamics.   

\medskip
\noindent \textbf{Mixing and coupling times.} 
Consider an ergodic (i.e., irreducible and aperiodic) Markov chain 
$\mathcal{M}$ with finite state space $\Omega$,
transition matrix $P$ and stationary distribution $\mu$. Let
$$
\tmix(\varepsilon) = \min \{t: \max_{X_0 \in \Omega} \| P^t(X_0,\cdot)  - \mu \|_\tv \leq \varepsilon\} 
$$
where $\|\cdot\|_\tv$ is the total-variation distance. The {\it mixing time} of $\mathcal{M}$ is given by $\tmix := \tmix(1/4)$, and for 
any positive $\varepsilon < 1/2$, by sub-multiplicativity, we have $\tmix(\varepsilon) \le \lceil \log_2 \varepsilon^{-1} \rceil\,\tmix$.  
We use $\tmix(\L_n^\xi)$ to denote the mixing time of the FK-dynamics on $\L_n\subset \integers^2$ with boundary condition $\xi$. 

A {\it (one step) coupling} of the Markov chain $\mathcal{M}$ specifies, for every pair of states $(X_t, Y_t) \in \Omega \times \Omega$, a probability distribution over $(X_{t+1}, Y_{t+1})$ such that the processes $\{X_t\}$ and $\{Y_t\}$, viewed in isolation, are faithful copies of $\mathcal{M}$, and if $X_t=Y_t$ then $X_{t+1}=Y_{t+1}$. The {\it coupling time}, denoted $\Tcoup$, is the minimum $T$ such that $\Pr[X_T \neq Y_T] \le 1/4$, starting from the worst possible pair of configurations $X_0$, $Y_0$. The following inequality is standard :
$\tmix \le \Tcoup$.

\medskip \noindent \textbf{Spectral gap and conductance.} 
If $P$ is irreducible and reversible with respect to $\mu$, then it has real eigenvalues $1 = \lambda_1 > \lambda_2 \ge \dots \ge \lambda_{|\Omega|} \ge -1$. 
The \emph{absolute spectral gap} of $P$
is defined by $\gap(P) = 1 - \lambda_*$ where $\lambda_* = \max\{|\lambda_2|,|\lambda_{|\Omega|}|\}$.
Let $ \mu_{\rm min} = \min_{\omega \in \Omega} \mu(\omega)$; the following is then a standard inequality: 
\begin{equation}
\label{eq:prelim:gap}
\gap(P)^{-1}-1 \leq \, \tmix \, \leq \gap(P)^{-1} \log (2\e \cdot \mu_{\rm min}^{-1})\,.
\end{equation}

For $A \subset \Omega$, the conductance of $A$ is defined as
\begin{equation}
	\label{eq:prelim:conductance-def}
	\Phi(A) = \frac{Q(A,A^c)}{\mu(A)} = \frac{\sum_{\omega \in A, \omega' \in A^c} \mu(\omega)P(\omega,\omega')}{\mu(A)}\,.
\end{equation}
The conductance of the chain is given by $\Phi_\star = \min_{A:\mu(A) \le 1/2} \Phi(A)$, and we have
\begin{equation}\label{eq:prelim:conductance}
\frac{\Phi_\star^2}{2} \le \gap(P) \le 2\Phi_\star\,.
\end{equation}

\medskip \noindent \textbf{FK-dynamics and duality.} 
Each run of the FK-dynamics on $\L_n$, with realizable boundary conditions $\xi$ and parameters $p,q$, 
determines a valid run of the
 FK-dynamics on the dual graph $\L_n^*$ with boundary conditions $\xi^*$ and parameters $p^*, q$.
 (Simply identify the FK configuration in each step with its dual configuration; it can be straightforwardly verified that the transitions of the  FK-dynamics on the dual graph occur with the correct probabilities.) 
Hence, the two dynamics have the same mixing times.

\begin{remark}\label{rem:fk-dynamics-duality}
The edge-set of the dual graph $\L_n^*$ is not exactly in correspondence with the edge-set of a rectangle $\L^*= \{-\frac 12,...,n+\frac 12\} \times \{-\frac 12 ,...,n+\frac 12\}$ as it does not include any edges that are between boundary vertices of $\L^*$. 
All the proofs in the paper carry through, only with the natural minor geometric modifications, to the case of rectangles $\Lambda_{n}$ with modified edge-set that only contains edges edges with at least one endpoint in $\Lambda_{n}\setminus \partial \Lambda_{n}$. The dual of this modified graph is then a $(n-1) \times (n-1)$ rectangle with all nearest-neighbor edges. 

We note that the definition of realizability of the boundary condition is slightly different depending on the above choice of edge-set for a rectangle: while $\xi$ is encoded in a configuration on $E(\mathbb Z^2)\setminus E(\L_n)$, its dual $\xi^*$ would be encoded in a configuration
on $E((\mathbb Z^2)^*)\setminus (E(\L^*)\setminus E(\partial \L^*))$. However, it is straightforward to check that these two notions of realizability are equivalent: a partition of $\partial \L_n$ can be encoded in a configuration on $E(\mathbb Z^2)\setminus E(\L_n)$ if and only if it can be encoded in a configuration on $E(\mathbb Z^2)\setminus (E(\L_n)\setminus E(\partial \L_n))$.
With these considerations, it often suffices for us to prove our theorems for $p<p_c(q)$. 
For example, it is sufficient to prove Theorem~\ref{thm:planar-mixing:intro} for~$p<p_c(q)$. 
\end{remark}

\medskip\noindent\textbf{Boundary condition modification.} Finally, we will often appeal to a comparison inequality between mixing times of the FK-dynamics under different boundary conditions.
This inequality is a consequence of a simple comparison between random-cluster measures.

\begin{definition}
For two partitions $\rho$ and $\rho'$ of the vertex set of a graph $G= (V,E)$ we say that $\rho \leq \rho'$ if $\rho$ is a finer partition than $\rho'$ (so that the partition consisting only of singletons is the minimal element of this partial order). Then for two partitions $\rho \leq \rho'$ we define $D(\rho,\rho') = c(\rho) - c(\rho')$ where $c(\rho)$ is the number of components in $\rho$. For two partitions $\rho, \rho'$ that are not comparable, let $\rho''$ be the smallest partition such that $\rho'' \geq \rho$ and $\rho'' \geq \rho'$ and set $D(\rho,\rho') = c(\rho) - c(\rho'')+ c(\rho')- c(\rho'')$. 
\end{definition}

\begin{lemma}
	\label{lemma:simple-rc-bound}
	Let $G=(V,E)$ be an arbitrary graph, $p \in (0,1)$ and $q > 0$. Let $\rho$ and $\rho'$ be two partitions of $V$ encoding two distinct external wirings on the vertices of $G$.
	Let $\pi_{G}^\rho$, $\pi_{G}^{\rho'}$ be the resulting random-cluster measures. Then, for all FK configurations $\omega\in \{0,1\}^E$, we have
	$$
	q^{-2D(\rho,\rho')} {\pi_{G}^{\rho'}(\omega)} \le \pi_{G}^{\rho}(\omega) \le q ^{2D(\rho,\rho')} \pi_{G}^{\rho'}(\omega)\,.
	$$  
\end{lemma}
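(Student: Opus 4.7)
The plan is to reduce everything to a clean combinatorial bound on how much the component count $c(\omega;\rho)$ can change when the external wirings are switched from $\rho$ to $\rho'$, and then feed that bound into the ratio of unnormalized weights and the ratio of partition functions. Concretely, if I establish the pointwise inequality $|c(\omega;\rho)-c(\omega;\rho')|\le D(\rho,\rho')$ for every configuration $\omega \in \{0,1\}^E$, then the $p^{|\omega|}(1-p)^{|E\setminus\omega|}$ prefactors cancel in the ratio of unnormalized weights, leaving $q^{c(\omega;\rho)-c(\omega;\rho')}$ which lies in the convex hull of $\{q^{-D(\rho,\rho')},q^{D(\rho,\rho')}\}$. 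Applying the same bound term by term to the sums defining $Z^\rho$ and $Z^{\rho'}$ shows that $Z^{\rho'}/Z^\rho$ also lies in the same interval, and multiplying the two ratios produces the claimed $q^{\pm 2D(\rho,\rho')}$ bound on $\pi_G^{\rho}(\omega)/\pi_G^{\rho'}(\omega)$.

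The heart of the argument is the pointwise bound on component counts, which I would handle in two steps. In the comparable case $\rho \leq \rho'$, I would obtain $\rho'$ from $\rho$ by performing a sequence of exactly $c(\rho)-c(\rho') = D(\rho,\rho')$ elementary merges, where each merge fuses two parts of the current partition into a single part. In the augmented graph, each such merge either identifies two vertices already sitting in the same connected component, leaving the component count unchanged, or identifies two vertices in distinct components, decreasing the count by exactly one. Iterating over all such merges gives $0 \leq c(\omega;\rho)-c(\omega;\rho') \leq D(\rho,\rho')$.

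For two partitions $\rho,\rho'$ that are not comparable I would route through their join $\rho''$, which by definition satisfies $\rho'' \geq \rho$ and $\rho'' \geq \rho'$. Applying the comparable bound twice yields $0 \leq c(\omega;\rho)-c(\omega;\rho'') \leq c(\rho)-c(\rho'')$ together with $0\leq c(\omega;\rho')-c(\omega;\rho'') \leq c(\rho')-c(\rho'')$. Writing $c(\omega;\rho)-c(\omega;\rho')$ as the difference of these two nonnegative quantities and bounding each side by its extreme value gives $|c(\omega;\rho)-c(\omega;\rho')| \leq (c(\rho)-c(\rho''))+(c(\rho')-c(\rho'')) = D(\rho,\rho')$, exactly matching the definition of $D$ in the non-comparable case.

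The lemma is essentially a bookkeeping statement, so I do not expect a serious conceptual obstacle; the one point that must be tracked honestly is the factor of $2$ in the exponent. It arises because the bound on component counts enters once through the numerator weight of the configuration $\omega$, and then a second time through the sum defining the partition function in the denominator. Provided the join construction is used consistently in the non-comparable case, so that the pointwise bound on $|c(\omega;\rho)-c(\omega;\rho')|$ splits additively through $\rho''$, the two-sided inequality on $\pi_G^\rho(\omega)/\pi_G^{\rho'}(\omega)$ follows immediately from the product of the weight and partition function bounds.
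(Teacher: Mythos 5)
Your proof is correct, and since the paper states this lemma without proof it is also, in effect, \emph{the} proof: establish the pointwise bound $|c(\omega;\rho)-c(\omega;\rho')|\le D(\rho,\rho')$ by passing through a chain of elementary merges (routed through the join $\rho''$ in the non-comparable case), and then use it once for the unnormalized weight ratio and once for the partition-function ratio to produce the factor of $2$ in the exponent. The merge argument is exactly right: each fusion of two blocks decreases the augmented-graph component count by $0$ or $1$, giving $0\le c(\omega;\rho)-c(\omega;\rho'')\le c(\rho)-c(\rho'')$ and similarly for $\rho'$, and the difference of two such quantities is bounded in absolute value by $D(\rho,\rho')$.

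One small point worth flagging: when you say $q^{c(\omega;\rho)-c(\omega;\rho')}$ lies in the convex hull of $\{q^{-D},q^{D}\}$, that interval equals $[q^{-D},q^{D}]$ only when $q\ge 1$; for $0<q<1$ and $D>0$ the displayed two-sided inequality in the lemma is vacuous as written (its lower bound exceeds its upper bound). So the lemma's hypothesis $q>0$ should really read $q\ge1$, which is the only regime the paper uses. Your proof is correct in that regime, and you were careful to phrase the intermediate step in a way that acknowledges the ordering issue; a one-line remark restricting to $q\ge1$ at the end would make the argument airtight against the stated hypothesis.
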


\noindent With this in hand, the variational form of the spectral gap implies the following (see e.g.,~\cite{SC}). 

\begin{lemma}\label{lem:comparison-tmix}
	Let $G=(V,E)$ be an arbitrary graph, $p \in (0,1)$ and $q > 0$.
	Consider the FK-dynamics on $G$ with the external wirings $\rho$ and $\rho'$,
	and let $\tmix(G^\rho)$, $\tmix(G^{\rho'})$ denote their mixing times. Then:
	$$
	\tmix(G^\rho)\leq   q^{ O(D(\rho,\rho'))} |E| \tmix(G^{\rho'})\,.
	$$
\end{lemma}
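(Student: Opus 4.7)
The plan is to apply the standard comparison-of-spectral-gaps argument via the variational (Poincar\'e) characterization, using Lemma~\ref{lemma:simple-rc-bound} as the key pointwise bound on the stationary measures, and then convert the resulting spectral gap comparison into a mixing time comparison using~\eqref{eq:prelim:gap}. The $|E|$ factor in the statement is introduced precisely in this last conversion step.

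First, I would record two auxiliary pointwise bounds. Lemma~\ref{lemma:simple-rc-bound} immediately gives $q^{-2D} \leq \pi^\rho_G(\omega)/\pi^{\rho'}_G(\omega) \leq q^{2D}$ where $D = D(\rho,\rho')$. Let $P^\rho$ and $P^{\rho'}$ denote the FK-dynamics transition matrices under the two wirings. Since every single-edge update has probability either $p/(q(1-p)+p)$ or $p$ (depending only on whether the edge is a cut-edge in the relevant augmented graph), the ratio $P^\rho(\omega,\omega')/P^{\rho'}(\omega,\omega')$ is bounded above and below by $\max(q(1-p)+p,(q(1-p)+p)^{-1})$, a constant depending only on $p,q$ which we can absorb into the $q^{O(D)}$ factor. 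In particular, both the edge measure $\pi^\rho_G(\omega)P^\rho(\omega,\omega')$ and the product measure $\pi^\rho_G(\omega)\pi^\rho_G(\omega')$ are within a factor $q^{O(D)}$ of their counterparts under $\rho'$, for every pair $(\omega,\omega')$.

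Next, I would invoke the variational formula
\[
\gap(P^\rho) \;=\; \inf_{f}\; \frac{\mathcal E_{P^\rho}(f,f)}{\Var_{\pi^\rho_G}(f)},
\]
where $\mathcal E_{P^\rho}(f,f) = \tfrac12 \sum_{\omega,\omega'}(f(\omega)-f(\omega'))^2 \pi^\rho_G(\omega)P^\rho(\omega,\omega')$ and $\Var_{\pi^\rho_G}(f) = \tfrac12 \sum_{\omega,\omega'}(f(\omega)-f(\omega'))^2 \pi^\rho_G(\omega)\pi^\rho_G(\omega')$. The pointwise bounds from the previous paragraph give $\mathcal E_{P^\rho}(f,f) \geq q^{-O(D)}\mathcal E_{P^{\rho'}}(f,f)$ and $\Var_{\pi^\rho_G}(f) \leq q^{O(D)}\Var_{\pi^{\rho'}_G}(f)$ term by term, hence
\[
\gap(P^\rho) \;\geq\; q^{-O(D)}\, \gap(P^{\rho'}).
\]

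Finally, applying~\eqref{eq:prelim:gap} on both sides, $\tmix(G^\rho) \leq \gap(P^\rho)^{-1}\log(2e/\pi^\rho_{\min}) \leq q^{O(D)}\gap(P^{\rho'})^{-1}\log(1/\pi^\rho_{\min})$, while $\gap(P^{\rho'})^{-1} \leq \tmix(G^{\rho'})+1$. The crude bound $\pi^\rho_G(\omega) \geq q^{-|V|}\min(p,1-p)^{|E|}/Z^\rho$ together with $Z^\rho \leq q^{|V|}$ gives $\log(1/\pi^\rho_{\min}) = O(|E|)$, which produces exactly the $|E|$ factor in the statement. There is no serious obstacle here; the only thing to watch is that the transition probability ratio is handled consistently with the measure ratio so that the exponents in $q$ stay linear in $D(\rho,\rho')$.
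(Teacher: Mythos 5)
Your proposal is correct and takes essentially the same approach the paper intends: the paper states this lemma without proof, citing only ``the variational form of the spectral gap'' and Saloff-Coste \cite{SC}, and your argument is exactly the standard spectral-gap comparison they have in mind — pointwise measure comparison from Lemma~\ref{lemma:simple-rc-bound}, pointwise Dirichlet-form comparison from the bounded transition-probability ratio, the Poincar\'e characterization to transfer the gap, and the $\log(1/\pi_{\min}) = O(|E|)$ bound via~\eqref{eq:prelim:gap} to produce the $|E|$ factor. The only slight imprecision is your stated lower bound $\pi^\rho_G(\omega) \geq q^{-|V|}\min(p,1-p)^{|E|}/Z^\rho$ with $Z^\rho \leq q^{|V|}$, which as written is tailored to $q>1$; for $q<1$ one should instead use $q^{c(\omega;\rho)}\geq q^{|V|}$ and $Z^\rho\leq q$, but either way $\log(1/\pi_{\min}) = O(|E|+|V|) = O(|E|)$, so the conclusion stands.
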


\section{Mixing time upper bounds: a general framework}\label{sec:general}

In this section we introduce a general framework for bounding the mixing time of the FK-dynamics on $\L_n = (\L_n, E(\L_n))$ by its mixing times on certain subsets.
In \cite{BS} it was shown that a strong form of spatial mixing (encoding exponential decay of correlations uniformly over subsets of $\L_n$) 
implies optimal mixing of the FK-dynamics.
However, this notion, known as \textit{strong spatial mixing (SSM)} and described in Remark~\ref{rem:ssm}, 
does not hold for most
boundary conditions
for which
fast mixing of the FK-dynamics is still expected.
To circumvent this, we introduce a weaker notion, which we call \textit{moderate spatial mixing (MSM)}.

\medskip
\noindent \textbf{Notation.} We introduce some notation first.
For a set $R \subseteq \L_n$, let
$E(R) \subseteq E_{n}$ be the set of edges of $E(\L_n)$ with both endpoints in $R$. We will denote by $R^c$ the vertex set $\L_n \setminus R$ and by $E^c(R)$ the edge-complement of $R$; i.e., $E^c(R):=E(\L_n)\setminus E(R)$. For a configuration $\omega:E(\L_n)\to \{0,1\}$, we will use $\omega(R)$, or alternatively $\omega(E(R))$, for the configuration of $\omega$ on $E(R)$. 
With a slight abuse of notation, for an edge set $F\subseteq E(\L_n)$, we use  $\{F=\omega\}$ 
for the event that the configuration on $F$ is given by $\omega$; when $\omega$ is the all free or the all wired configuration, we simply use $\{F=0\}$ and $\{F=1\}$, respectively. 

\begin{definition}
	\label{def:msm}
	Let $\xi$ be a boundary condition for $\L_n =(\L_n, E(\L_n))$ and
	let $\mathcal{B} = \{B_1,B_2,\dots,B_k\}$ be a collection of subsets of $\L_n$.
	We say that \emph{moderate spatial mixing (MSM)} holds on $\L_n$ for $\xi$, $\B$ and $\delta > 0$
	if for all $e \in E(\L_n)$, there exists $B_j \in \mathcal{B}$ such that
	\begin{equation}
	\label{eq:ssm:def}
	\left|\pi^\xi_{\L_n,p,q}(\,e=1\mid E^c(B_j) = 1\,)-\pi^\xi_{\L_n,p,q}(\,e=1\mid E^c(B_j) = 0\,)\right| ~\le~ \delta\,.
	\end{equation}
\end{definition}

\noindent
In words, MSM holds for $\B$ if for every edge $e \in E(\L_n)$ we can find $B_j$
such that $e \in E(B_j)$ and the ``influence'' of the configuration on $E^c(B_j)$
on the state of $e$ is bounded by $\delta$.

\begin{remark}\label{rem:ssm}
	SSM as defined in \cite{BS} holds
	when 
	MSM holds for a specific sequence of collections of subsets: if $\mathcal{B}_r$ is the set of subsets containing all the square boxes of side length $2r$
	centered at each $e \in E(\L_n)$ (intersected with $E(\L_n)$),
	then SSM holds if MSM holds for $\mathcal{B}_r$ for every $r \ge 1$ with $\delta = \exp(-\Omega(r))$. 
\end{remark}

MSM does not capture the fast mixing of the FK-dynamics the way SSM does. Namely, it is easy to find collections of subsets for which MSM holds for \textit{all} boundary conditions, including those boundary conditions for which we later prove slow mixing; see Theorem~\ref{thm:lb:intro}.   
However, if, for a collection $\B = \{B_1,B_2,\dots,B_k\}$,
we also bound the mixing time of the FK-dynamics on every $B_j$, we can deduce a mixing time bound for the FK-dynamics on $\L_n$. 
Let  $\tmix(B^\tau)$ denote the mixing time of the FK-dynamics on the subset $B \subseteq \L_n$ with boundary condition $\tau$.
(Recall that $\tau$ corresponds to a partition of $\b B$ and that $\b B$ consists of those vertices in $B$ that are adjacent to vertices in $\Z^2 \setminus B$.)

\begin{definition}
	\label{def:lm}
	Let $\xi$ be a boundary condition for $\L_n = (\L_n, E(\L_n))$ and
	let $\mathcal{B} = \{B_1,B_2,\dots,B_k\}$ with $B_j \subset \L_n$.
	We say that \emph{local mixing (LM)} holds for $\mathcal{B}$ and $T > 0$, if 
	\begin{align*}
	\tmix\big(B_j^{(1,\xi)}\big) \leq T \quad \mbox{and}\quad \tmix\big(B_j^{(0,\xi)}\big) \leq T \qquad \mbox{ for all $j=1,...,k$}
	\end{align*}
	where $(1,\xi)$ (resp., $(0,\xi)$) denotes the boundary condition on $B_j$
	induced by the event $\{E^c(B_j)=1\}$ (resp. $\{E^c(B_j)=0\}$) and the boundary condition $\xi$.
\end{definition}

\begin{remark}
	Observe that when $B_j \cap \b\L_n = \emptyset$, $(1,\xi)$ and $(0,\xi)$ are simply the wired and free boundary condition on $B_j$, respectively.
	When $B_j \cap \b\L_n \neq \emptyset$, the connectivities from $\xi$ could also induce some connections in $(1,\xi)$ and $(0,\xi)$.
\end{remark}

\noindent
Our next theorem, roughly speaking, establishes the following implication:
$$
\text{MSM} + \text{LM} \implies \text {upper bound for mixing time of FK-dynamics}, 
$$
with the quality of the bound depending on the $T$ for which LM holds.
A similar (and inspiring) implication for the 
Glauber dynamics of the Ising model in graphs of bounded degree
was established by Mossel and Sly in~\cite{MS}; there, the notion of MSM is replaced by a 
form of spatial mixing which is stronger than SSM. 

\begin{theorem}
	\label{thm:general-mixing}
	Let $\xi$ be a boundary condition on $\L_n = (\L_n,E(\L_n))$
	and
	let $\mathcal{B} = \{B_1,B_2,\dots,B_k\}$ with $B_j \subset \L_n$ for all $j=1,\dots,k$.
	If for $\xi$ and $\mathcal{B}$, 
	moderate spatial mixing holds for some $\delta \le 1/(12|E(\L_n)|)$
	and local mixing holds for some $T > 0$, then
	$\tmix(\L_n^\xi) = O(Tn^2 \log n)$.
\end{theorem}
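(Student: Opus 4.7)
My plan is to introduce an auxiliary heat-bath block dynamics $\mathcal{M}^B$ associated to the collection $\mathcal{B}$, bound its mixing time by $O(n^2 \log n)$ using MSM, and then transfer this estimate back to the single-edge FK-dynamics at a multiplicative cost of $O(T)$ coming from LM. Concretely, for each edge $e \in E(\L_n)$, MSM supplies a \emph{witness block} $B(e) \in \mathcal{B}$ for which~\eqref{eq:ssm:def} holds. The chain $\mathcal{M}^B$ is then: at each step, pick $e \in E(\L_n)$ uniformly at random and replace $\omega$ on $E(B(e))$ by a fresh sample from the conditional distribution $\pi^\xi_{\L_n}(\,\cdot \mid \omega_{E^c(B(e))}\,)$. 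This defines a Markov chain reversible with respect to $\pi^\xi_{\L_n}$.

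The central step is to show $\tmix(\mathcal{M}^B) = O(n^2 \log n)$. For this I would use the monotone grand coupling of two copies $X^+_t, X^-_t$ of $\mathcal{M}^B$ started from the all-open and all-closed configurations and maintained monotone throughout by the FKG property of the random-cluster measure. Writing $D_t = \mathbb{E}|X^+_t \oplus X^-_t|$, the target is a contraction $D_{t+1} \leq (1 - c/|E(\L_n)|)\,D_t$, which gives $D_t < 1$ after $O(n^2 \log n)$ steps and hence the claimed mixing time. The expected change per step decomposes into: a negative contribution from steps where $E(B(e))$ contains a current disagreement (in which case both copies share the same exterior $\omega_{E^c(B(e))}$, so the coupled block update resamples from a common distribution and perfectly couples on $E(B(e))$); and a positive contribution from steps where all current disagreements lie in $E^c(B(e))$, where by monotonicity the block update generates at most $\sum_{f \in E(B(e))} [\pi(f = 1 \mid E^c(B(e)) = X^+_{E^c}) - \pi(f = 1 \mid E^c(B(e)) = X^-_{E^c})]$ new disagreements in expectation. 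MSM, combined with a telescoping over the individual exterior disagreements and monotonicity in the boundary, controls the latter sum, and the hypothesis $\delta \leq 1/(12|E(\L_n)|)$ ensures the positive contribution is dominated.

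For the transfer I would use a Dirichlet form comparison driven by LM. For each $B \in \mathcal{B}$ under the boundary induced on $\partial B$ by $\xi$, LM gives spectral gap $\geq 1/T$ for the FK-dynamics restricted to $E(B)$. By Poincar\'e's inequality, for every function $f$ one then has $\mathrm{Var}_{\pi(\,\cdot\, \mid E^c(B))}(f) \leq T \cdot \mathcal{E}^B_{\mathrm{FK}}(f,f)$, where $\mathcal{E}^B_{\mathrm{FK}}$ is the block-restricted Dirichlet form. Averaging over $\omega_{E^c(B)} \sim \pi^\xi_{\L_n}$ and aggregating over the block choices in the definition of $\mathcal{E}_{\mathcal{M}^B}$ yields $\mathcal{E}_{\mathcal{M}^B}(f,f) \leq O(T)\,\mathcal{E}_{\mathrm{FK}}(f,f)$, and hence $\mathrm{gap}(\mathrm{FK}) \geq \mathrm{gap}(\mathcal{M}^B)/O(T)$. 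Combined with the previous bound and the standard inequality $\tmix \leq \mathrm{gap}^{-1} \log \pi_{\min}^{-1} = O(n^2\,\mathrm{gap}^{-1})$, this yields $\tmix(\L_n^\xi) = O(T n^2 \log n)$, as claimed.

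The main technical obstacle is the contraction step: MSM is a \emph{single-edge} condition (each edge comes with its own witness block), whereas one step of $\mathcal{M}^B$ resamples an entire block at once. Controlling the expected new disagreements on the non-canonical edges $f \in E(B(e))$ -- whose own witness blocks $B(f)$ need not coincide with $B(e)$ -- requires carefully combining per-edge MSM bounds across different witness blocks via monotonicity of the random-cluster model, and this is precisely where the constant in $\delta \leq 1/(12|E(\L_n)|)$ is calibrated.
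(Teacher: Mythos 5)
Your proposal takes a genuinely different route from the paper's, but both halves of the argument have gaps, and at least one of them looks structural rather than cosmetic.

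\emph{The contraction step for $\mathcal{M}^B$.} Your case analysis is not correct as stated: when $E(B(e))$ contains a disagreement, the exteriors $X^\pm(E^c(B(e)))$ need not agree (all disagreements would have to lie in $E(B(e))$ for that), so the block update does not in general perfectly couple on $E(B(e))$. More importantly --- and you do flag this --- MSM only bounds the discrepancy of $f$ across the extremes of \emph{its own} witness block $B(f)$. The positive contribution you need to control is $\sum_{f\in E(B(e))}\bigl[\pi(f=1\mid\omega^+_{E^c(B(e))})-\pi(f=1\mid\omega^-_{E^c(B(e))})\bigr]$, which involves discrepancies of $f$ with respect to the \emph{resampled} block $B(e)$, and for $f$ near $\partial B(e)\setminus\partial\L_n$ whose own witness block is a different, smaller block, this discrepancy can be $\Omega(1)$. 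With the concrete collection used in Section~\ref{section:planar}, resampling the annulus $R$ when the exteriors are still close to the extremes creates $\Theta(n)$ expected new disagreements along the inner boundary of $R$, and MSM gives no handle on them. Monotonicity does not rescue you here: it runs the wrong way unless $B(e)\subseteq B(f)$, which is false for exactly the problematic edges.

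\emph{The Dirichlet-form transfer.} This gap you do not flag. Definition~\ref{def:lm} only asserts $\tmix(B_j^{(1,\xi)})\le T$ and $\tmix(B_j^{(0,\xi)})\le T$, i.e., mixing under the two \emph{extremal} exteriors on $E^c(B_j)$. Your Poincar\'e step averages over arbitrary $\omega_{E^c(B_j)}\sim\pi^\xi_{\L_n}$ and therefore needs a spectral-gap lower bound of order $1/T$ for the restricted FK-dynamics under \emph{every} exterior configuration. Spectral gaps of random-cluster dynamics are not monotone in boundary conditions, so LM as defined does not supply this. (There is also a secondary issue: passing from $\gap(\L_n^\xi)$ back to $\tmix(\L_n^\xi)$ via $\log\pi_{\min}^{-1}=\Theta(n^2)$ gives $O(Tn^4\log n)$ rather than the stated $O(Tn^2\log n)$; your conclusion line does not account for this factor.)

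The paper's argument avoids all of these problems by never introducing a block dynamics and never tracking a global Hamming distance. It runs $X_t\supseteq Y_t$ from the extremal states under the monotone identity coupling, and for each fixed edge $e$ with witness block $B_j$ introduces two auxiliary FK-chains $Z^\pm_t$ that only accept single-edge updates inside $E(B_j)$; these see a \emph{fixed} all-wired/all-free exterior throughout, sandwich the true chains, and by LM mix to $\pi^\xi_{\L_n}(\cdot\mid E^c(B_j)=1)$ and $\pi^\xi_{\L_n}(\cdot\mid E^c(B_j)=0)$ in time $T$. MSM is then invoked \emph{only} at the canonical edge $e$ to bound the discrepancy between those two conditional stationary measures, and a Chernoff bound ensures $\hat T=\Theta(T|E(\L_n)|\log|E(\L_n)|)$ total steps land enough updates in $B_j$. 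A union bound over $e$ finishes. This per-edge sandwich uses LM precisely at the extremal boundaries and MSM precisely at the canonical edge --- exactly the hypotheses you actually have --- and it gives the mixing-time bound directly, with no $\log\pi_{\min}^{-1}$ loss.
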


\begin{proof}
	Consider two copies $\{X_t\}$, $\{Y_t\}$ of the FK-dynamics.
	We couple the evolution of $\{X_t\}$ and $\{Y_t\}$ 
	by using the same random $e \in E(\L_n)$ and the same uniform random number $r \in [0,1]$ to decide whether to add or remove $e$ from the configurations. 
	This is a standard coupling of the steps of the FK-dynamics (see, e.g.,~\cite{BS,Grimmett}); we call it the {\it identity coupling}.
	It is straightforward to verify that, when $q \ge 1$, the identity coupling is a {\it monotone coupling}, in the sense that if $X_t \subseteq Y_t$ then $X_{t+1} \subseteq Y_{t+1}$ with probability $1$. 
	
	We bound the coupling time $\Tcoup$ of the identity coupling for the FK-dynamics.
	The result then follows from the fact that $\tmix \le \Tcoup$. 
	Since the identity coupling can be extended to a simultaneous coupling of {\it all} configurations that preserves the partial order $\subseteq$, the coupling time starting from any pair of configurations is bounded by the coupling time for initial configurations $Y_0 = \emptyset$ and $X_0 = E(\L_n)$.
	
	We prove that there exists $\hat{T}=O(T n^2 \log n)$ such that the identity coupling $\mathbb P$ satisfies
	$$\P[X_{\hat{T}}(e) \neq Y_{\hat{T}}(e)] \le \frac{1}{4|E(\L_n)|}$$
	for every $e \in E(\L_n)$. A union bound over $e\in E(\L_n)$ then implies that $\Tcoup \le \hat{T}$.
	
	Fix any edge $e \in E(\L_n)$ and let $B_j \in \mathcal{B}$ be a subset for which~\eqref{eq:ssm:def} is satisfied (such a subset exists since moderate spatial mixing holds).
	To bound $\P[\,X_{\hat{T}}(e) \neq Y_{\hat{T}}(e)\,]$, we introduce two additional instances $\{Z^+_t\}$, $\{Z^-_t\}$ of the FK-dynamics. 
	These two copies update only edges with both endpoints in $B_j$ and reject all other updates. 
	We set $Z^+_0 = E(\L_n)$ and $Z^-_0 = \emptyset$. 
	The four Markov chains $\{X_t\}$, $\{Y_t\}$, $\{Z^+_t\}$ and $\{Z^-_t\}$ are coupled with the identity coupling, with updates outside $B_j$ ignored by both $\{Z^+_t\}$ and $\{Z^-_t\}$. The monotonicity of this coupling, along with a triangle inequality, implies that for all $t \ge 0$, 
	\begin{eqnarray}
	\P[\,X_t(e) \neq Y_t(e)\,] &\le& \left|\P[\,Z^+_t(e)=1\,] - \pi^\xi_{\L_n}(\,e=1\,|\, E^c(B_j) = 1\,)\right|\label{mixing:plus-bound}\\
	&~&+\left|\pi^\xi_{\L_n}(\,e=1\,|\, E^c(B_j) = 1\,) - \pi^\xi_{\L_n}(\,e=1\,|\, E^c(B_j) = 0\,)\right| \label{mixing:smp-bound}\\
	&~&+\left|\pi^\xi_{\L_n}(\,e=1\,|\,E^c(B_j) = 0\,) - \P[\,Z^-_t(e)=1\,] \right|. \label{mixing:minus-bound}
	\end{eqnarray}
	Observe that the restrictions of the chains $\{Z^+_t\}$ and $\{Z^-_t\}$ to $B_j$ are lazy versions of FK-dynamics on $B_j$ with stationary measures $\pi^\xi_{\L_n}(\,\cdot\,|\, E^c(B_j) = 1 \,)$ and $\pi^\xi_{\L_n}(\,\cdot\,|\, E^c(B_j)= 0)\,$, respectively. The laziness comes from the fact that they only accept updates that are in $B_j$, and by the local mixing assumption, once $T$ updates have been done in $B_j$, the chains $\{Z_t^+\}$ and $\{Z_t^-\}$ will be mixed. 
	
	Now, after $\hat{T} = C T |E(\L_n)| \log_2 \lceil 24|E(\L_n)| \rceil $ steps, the expected number of updates in $B_j$ is 
	$$
	C T |E(\L_n)| \log_2 \lceil 24|E(\L_n)| \rceil  \frac{|E(B_j)|}{|E(\L_n)|} \ge C T  \log_2 \lceil 24|E(\L_n)| \rceil\,. 
	$$
	Let ${\mathcal A}_{\hat{T} }$ be the event that 
	the number of updates in $B_j$ after $\hat{T}$ steps is at least 
	$T  \log_2 \lceil 24|E(\L_n)| \rceil$.  
	A Chernoff bound then implies that, for a large enough constant $C > 0$,
	$$
	\Pr[{\mathcal A}^c_{\hat{T} }] \le \frac{1}{24|E(\L_n)|}\,.
	$$
	Therefore,
	\begin{equation}
	\label{eq:sec3:proof-a}
	\left|\Pr[Z^+_{\hat{T}}(e)=1 \mid {\mathcal A}_{\hat{T} }] - \Pr[Z^+_{\hat{T}}(e)=1]\right| \le \Pr[ {\mathcal A}^c_{\hat{T} }] \le \frac{1}{24|E(\L_n)|}\,.
	\end{equation}
	
	By the local mixing property, and the fact that $\tmix(\varepsilon) \le \lceil \log_2 \varepsilon^{-1} \rceil \cdot \tmix$ for any  positive $\varepsilon < 1/2$, we have  
	\begin{equation}
	\label{eq:sec3:proof-b}
	\left|\Pr[Z^+_{\hat{T}}(e)=1 \mid {\mathcal A}_{\hat{T} }] - \pi^\xi_{\L_n}(\,e=1\,|\, E^c(B_j) = 1\,)\right| \le \frac{1}{24|E(\L_n)|}\,.
	\end{equation}
	It then follows from \eqref{eq:sec3:proof-a}, \eqref{eq:sec3:proof-b} and the triangle inequality
	that when $t = \hat{T}$ the right-hand side in~\eqref{mixing:plus-bound} is at most $\frac{1}{12|E(\L_n)|}$.
	The same bound can be deduced for \eqref{mixing:minus-bound} in a similar manner. 
	
	Finally, since moderate spatial mixing holds for some $\delta < \frac{1}{12|E(\L_n)|}$, we have
	$$
	\left|\pi^\xi_{\L_n}(\,e=1\,|\, E^c(B_j) = 1\,) - \pi^\xi_{\L_n}(\,e=1\,|\, E^c(B_j) = 0\,)\right| \le \frac{1}{12|E(\L_n)|}\,;
	$$ 
	see Definition~\ref{def:msm}. Putting these together we see that
	$\P[\,X_{\hat{T}}(e) \neq Y_{\hat{T}}(e)\,] \le \frac{1}{4|E(\L_n)|}$ 
	as desired.
\end{proof}

\section{Fast mixing on thin rectangles}
\label{sec:thin-rect}

The main difficulty in proving Theorem~\ref{thm:planar-mixing:intro} 
using the general framework from Section~\ref{sec:general}
is obtaining mixing time estimates on thin rectangles of dimension $\Theta(n) \times \Theta(\log n)$  with realizable boundary conditions. To motivate this, we notice that since $p<p_c(q)$, the influence of the boundary is lost, with high probability, at a distance $\Theta(\log n)$. Thus the main difficulty will be to bound the mixing time of the FK-dynamics in the annulus of width $\Theta(\log n)$ with realizable boundary conditions on the outside. 
As such, the key ingredient in the proof of Theorem~\ref{thm:planar-mixing:intro} will be the following mixing time bound on thin rectangles.

For an $n\times l$ rectangle $\L_{n,l}=\llb0,n\rrb\times\llb0,l\rrb$, we use
$\b_{\north}\L_{n,l}$, $\b_{\east}\L_{n,l}$, $\b_{\south}\L_{n,l}$ and $\b_{\west}\L_{n,l}$
for its north, east, south and west boundaries respectively.
Recall that for $a,b\in \mathbb Z$, we set $\llb a,b \rrb = \{a,a+1,\dots,b\}$ and $\llb a,b \rrb^c = \llb0,n \rrb \setminus \llb a,b \rrb$.

\begin{theorem}
	\label{thm:main-thin-rect}
	Consider $\Lambda_{n,l}= (\Lambda_{n,l},E(\Lambda_{n,l}))$ 
	for $l\leq n$ with an arbitrary \emph{realizable} boundary condition $\xi$ that is either free or wired on $\partial_\east \L_{n,l} \cup \partial_\west \L_{n,l} \cup\partial_\south \L_{n,l}$. 
	Then, for every $q>1$ and $p \neq  p_c(q)$, the mixing time 
	of the FK-dynamics on $\Lambda_{n,l}$ is at most $\exp(O(l+\log n))$.
\end{theorem}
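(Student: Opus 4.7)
The plan is to prove Theorem~\ref{thm:main-thin-rect} by strong induction on $n$, treating $l$ as an auxiliary parameter, focusing on the subcritical case $p < p_c(q)$ (the supercritical case then follows via planar duality, as in Remark~\ref{rem:fk-dynamics-duality}). For the base case $n \leq n_0$ with $n_0$ an absolute constant, we have $|E(\Lambda_{n,l})| = O(l)$ and, since Lemma~\ref{lemma:simple-rc-bound} implies that FK weights are comparable up to a factor $q^{O(l)}$ across configurations (after shifting to, say, the all-wired boundary), a crude canonical-paths or conductance argument through the all-wired configuration yields $\tmix(\Lambda_{n,l}^\xi) \leq \exp(O(l))$ uniformly in $\xi$.

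For the inductive step, I would run a two-block block dynamics on overlapping sub-regions $B_1, B_2$ of $\Lambda_{n,l}$ whose shapes are adapted to the planar structure of $\xi$. Since $\xi$ is realizable and agrees with free or wired on the east, west, and south sides, its only non-trivial structure is the restriction to $\bt \Lambda_{n,l}$, and by planarity of $\mathbb Z^2$ the corresponding partition classes form a laminar family of nested arcs along $\bt \Lambda_{n,l}$. If some column $m \in [n/3, 2n/3]$ is not crossed by any arc of $\xi$, I take $B_1 = \llb 0, m+k \rrb \times \llb 0, l \rrb$ and $B_2 = \llb m-k, n \rrb \times \llb 0, l \rrb$ with overlap width $2k = \Theta(\log n)$; each $B_i$ inherits a realizable boundary condition with free/wired on three sides, and the induction applies. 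If no such $m$ exists, laminarity produces a common arc $A$ of span $[a,b] \supset [n/3, 2n/3]$ no sub-arc of which also spans $[n/3,2n/3]$; I then set $B_1 = \llb a-k,\, b+k \rrb \times \llb 0, l \rrb$ (the strip under $A$) and $B_2$ equal to its complement (a disjoint union of two sub-rectangles). The arc $A$ then becomes a single external wiring in each $B_i$ preserving realizability, and by the laminar structure each block has strictly smaller ``complexity'' (measured by width together with the number of midpoint-crossing arcs) than $\Lambda_{n,l}$, so the induction again applies.

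Given such a decomposition, I would apply the standard block-dynamics comparison (Cesi, Martinelli~\cite{Martinelli-notes}), which bounds the FK-dynamics spectral gap on $\Lambda_{n,l}$ from below by the product of the two-block-dynamics gap and $\min_i \gap(\text{FK on } B_i)$, up to an $O(|E(\Lambda_{n,l})|)$ overhead. The two-block gap I would bound below by an absolute constant (independent of $n, l, \xi$) via a monotone coupling: since the overlap $B_1 \cap B_2$ has width $\Theta(\log n)$ and lies in the bulk, exponential decay of connectivities~(\ref{eq:EDC})---valid uniformly in boundary conditions at $p < p_c(q)$---ensures that two monotonically coupled chains agree on the overlap with high probability after $O(1)$ block steps, after which each block can be internally resampled to stationarity. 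Combining with the inductive hypothesis yields a recursion of the form $\gap(n,l)^{-1} \leq C \cdot \gap(\tfrac{2n}{3} + O(\log n),\, l)^{-1}$ with $C$ an absolute constant, which iterates to $\gap(n,l)^{-1} \leq n^{O(1)} \cdot \gap(O(1), l)^{-1}$, and hence by~(\ref{eq:prelim:gap}) to $\tmix(\Lambda_{n,l}^\xi) \leq \exp(O(l + \log n))$.

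The main technical obstacle is the ``dense-arc'' case where every midpoint column is obstructed and the blocks $B_1, B_2$ become non-rectangular ``groups of rectangles''. One must verify simultaneously that (i) the inherited boundary condition on each $B_i$ is still realizable with the required free/wired structure on three sides; (ii) a suitable complexity measure on the block shape together with its arc structure strictly decreases, ensuring the induction is well-founded; and (iii) the block-dynamics coupling via~(\ref{eq:EDC}) still gives constant-time mixing uniformly in the now-complicated block geometries and the positioning of arcs, in particular so that the constants arising do not degrade with $l$ or with nesting depth. Navigating this planar-geometric bookkeeping, delicately dictated by the laminar arc family, is the core of the argument.
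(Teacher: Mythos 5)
Your high-level plan (induction via a two-block block dynamics whose blocks are adapted to the planar arc structure of $\xi$ along $\bt\Lambda_{n,l}$, with overlap width $\Theta(\log\cdot)$ and a constant-gap coupling argument using~\eqref{eq:EDC}) is indeed the paper's strategy, but your proposed splitting rule in the ``dense-arc'' case has a genuine gap that would break the recursion.

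In your case~(b) you split along the innermost arc $A$ whose span $[a,b]$ contains the middle third, take $B_1 = \llb a-k,b+k\rrb \times \llb 0,l\rrb$, and hope a complexity measure of the form ``width plus number of midpoint-crossing arcs'' strictly decreases. It does not: if $a=0$ and $b=n$ (e.g.\ $\xi$ wires the two top corners), then $B_1 = \Lambda_{n,l}$ itself and the recursion makes no progress at all. Even when $a,b$ are not extremal, $b-a$ can be $n - o(n)$, so the width of $B_1$ is not reduced by any constant factor, and since the number of nested arcs can be $\Theta(n)$, any recursion whose depth is controlled by your complexity measure could be $\Theta(n)$ deep. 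Each recursion level costs a multiplicative constant in the spectral gap (via the block-dynamics comparison), so a depth-$\Theta(n)$ recursion yields $\tmix = \exp(\Theta(n))$, far weaker than the claimed $\exp(O(l+\log n))$. You flag this as ``the core of the argument'' but leave it open; it cannot be fixed by bookkeeping alone.

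The paper's resolution (Lemma~\ref{lem:splitting-algorithm}) is a different splitting algorithm that \emph{always} produces a disconnecting interval whose width is a fraction in $[\tfrac14,\tfrac34]$ of the current region, regardless of how deeply the arcs nest. Concretely: if no wired pair $(x,l),(y,l)$ gives width in $[\tfrac13,\tfrac23]$, take the minimal wired pair $(x_0,l),(y_0,l)$ of width $>\tfrac23$ (or the whole top boundary if there is none), partition $\llb x_0+1,y_0-1\rrb$ into maximal free-wired-type disconnecting intervals (each necessarily of width $<\tfrac13$), and merge adjacent ones using Lemma~\ref{lem:disconnecting-unions} to synthesize a free-type disconnecting interval of width in $[\tfrac13,\tfrac23]$. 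This ``partition-and-merge'' step is exactly what is missing from your proposal and is what guarantees both blocks have width in $[\tfrac15,\tfrac45]$ of the parent, hence $O(\log n)$ recursion depth and the $n^{O(1)}$ overall factor. You would also need the paper's preliminary step of freeing the vertices within distance $m$ of the east/west sides (at a $q^{O(m)}$ cost via Lemma~\ref{lem:comparison-tmix}) to keep the cut positions a safe distance from the block boundaries, and the notion of compatibility (Definition~\ref{def:compatible-bc}) to make the induction over groups of rectangles well-founded. Finally, a minor point: duality sends (free on three sides, $p<p_c$) to (wired on three sides, $p>p_c$), so proving only $p<p_c$ does not cover the free-on-three-sides supercritical case; the paper handles $p>p_c$ directly via the dual of the coupling event (Lemma~\ref{lemma:main:coupling:supercritical}).
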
 

\noindent Observe that when $l =O(\log n)$, this implies the mixing time is $n^{O(1)}$, which will be the setting of interest in our proofs. Moreover, we note that it suffices for us to prove Theorem~\ref{thm:main-thin-rect} for the set of realizable boundary conditions $\xi$ that are free on $\partial_\east \L_{n,l} \cup\partial_{\west} \L_{n,l}\cup \partial_\south \L_{n,l}$ and all $p\neq p_c(q)$, as the set of boundary conditions dual to these are exactly the set of realizable boundary conditions that are wired on   $\partial_\east \L_{n,l} \cup\partial_{\west} \L_{n,l}\cup \partial_\south \L_{n,l}$; see Remark~\ref{rem:fk-dynamics-duality}.

In Section~\ref{subsec:thin:sketch}, we give a overview of the main ideas in the proof of this theorem. In Sections~\ref{subsec:disconnecting-intervals}--\ref{subsec:compatible-bc}, we introduce some crucial notions regarding groups of rectangular subsets of $\L_{n,l}$ and their relations with the boundary conditions $\xi$. Sections~\ref{subsec:sub-blocks}--\ref{subsec:block-dynamics} bound the mixing time of the block dynamics with respect to a suitably-chosen set of subsets of $\L_{n,l}$. The recursive proof of Theorem~\ref{thm:main-thin-rect} is then completed in Section~\ref{subsec:complete-proof}. 

\begin{figure}[t]
	\begin{subfigure}[b]{0.49\textwidth}
		
		\begin{center}
			\begin{tikzpicture}
			\draw[draw=black] (0,0) rectangle (5,1);
			\draw[draw=black] (2,0)--(2,1);
			\draw[draw=black] (3,0)--(3,1);
			\draw[draw=gray!90,<->] (0,0.6) -- (3,0.6);
			\draw[draw=gray!90,<->] (2,0.4) -- (5,0.4);
			\draw [rounded corners]  (0.25,1)--(0.25,1.4)--(4.75,1.4)--(4.75,1);
			\draw [rounded corners]  (0.75,1)--(0.75,1.2)--(4.25,1.2)--(4.25,1);
			\node at (1,0.7) {{\tiny $B_\west$}};
			\node at (4,0.5) {{\tiny $B_\east$}};
			
			\end{tikzpicture}
			\caption{}
			
		\end{center}
	\end{subfigure}
	\begin{subfigure}[b]{0.49\textwidth}
		
		\begin{center}
			\begin{tikzpicture}
			\draw[draw=black] (0,0) rectangle (5,1);
			
			\draw [rounded corners]  (0,1)--(0,1.99)--(5,1.99)--(5,1);
			\draw [rounded corners]  (0.2,1)--(0.2,1.92)--(4.8,1.92)--(4.8,1);
			\draw [rounded corners]  (0.4,1)--(0.4,1.85)--(4.6,1.85)--(4.6,1);
			\draw [rounded corners]  (0.6,1)--(0.6,1.78)--(4.4,1.78)--(4.4,1);
			\draw [rounded corners]  (0.8,1)--(0.8,1.71)--(4.2,1.71)--(4.2,1);
			\draw [rounded corners]  (1,1)--(1,1.64)--(4,1.64)--(4,1);
			\draw [rounded corners]  (1.2,1)--(1.2,1.57)--(3.8,1.57)--(3.8,1);
			\draw [rounded corners]  (1.4,1)--(1.4,1.5)--(3.6,1.5)--(3.6,1);
			\draw [rounded corners]  (1.6,1)--(1.6,1.43)--(3.4,1.43)--(3.4,1);
			\draw [rounded corners]  (1.8,1)--(1.8,1.36)--(3.2,1.36)--(3.2,1);
			\draw [rounded corners]  (2,1)--(2.0,1.29)--(3,1.29)--(3,1);
			\draw [rounded corners]  (2.2,1)--(2.2,1.22)--(2.8,1.22)--(2.8,1);
			\draw [rounded corners]  (2.4,1)--(2.4,1.15)--(2.6,1.15)--(2.6,1);

			\draw[draw=black] (1.2,0)--(1.2,1);
			\draw[draw=black] (3.8,0)--(3.8,1);
			\draw[draw=gray!90,<->] (1.2,0.6) -- (3.8,0.6);
			\draw[draw=gray!90,<->] (0,0.4) -- (1.2,0.4);
			\draw[draw=gray!90,<->] (3.8,0.4) -- (5,0.4);
			
			\node at (2.5,0.7) {{\tiny $\mathcal R_1$}};
			\node at (0.6,0.5) {{\tiny $\mathcal R_2$}};	
			\node at (4.4,0.5) {{\tiny $\mathcal R_2$}};	
			\end{tikzpicture}
			\caption{}
		\end{center}
	\end{subfigure}
	
	\caption{{{\rm (a)} A boundary condition for which no configuration in $B_\west \cap B_\east$ isolates $B_\west\setminus B_\east$ from $B_\east\setminus B_\west$.} 
		{{\rm (b)} A boundary condition $\xi$ where every pair of overlapping rectangles (as in \eqref{eq:potential-blocks}) must interact through $\xi$; however, the two \emph{groups of rectangles} $\mathcal R_1$, $\mathcal R_2$ do not interact through $\xi$.}}     
	\label{figure:sketch:blocks}
\end{figure}

\subsection{Outline of proof of Theorem~\ref{thm:main-thin-rect}}
\label{subsec:thin:sketch}

We first mention some obstructions that boundary conditions present to proving Theorem~\ref{thm:main-thin-rect} using approaches that are common in analogous problems for spin systems.
A traditional approach to proving mixing time bounds for thin rectangles is the canonical paths method (\cite{Martinelli-SP,Martinelli-notes,JS,Sinclair}), which gives an upper bound that is exponential in the shorter side length; however, boundary conditions can significantly distort the augmented graph with external wirings, preventing this approach from succeeding. 

A sharper approach would be to use an inductive scheme~\cite{Cesi,Martinelli-notes,GL2}, whereby, we bound the mixing time of the FK-dynamics on $n\times l$ rectangles by the mixing times in smaller rectangular blocks, e.g., 
\begin{align}\label{eq:potential-blocks}
B_\west = \llb 0,\tfrac 23 n\rrb \times \llb 0,l\rrb \qquad \mbox{and}\qquad B_\east = \llb \tfrac 13 n ,n\rrb \times \llb 0,l\rrb\,.
\end{align}
This method requires bounding by the mixing time of the so-called \textit{block dynamics}.
\begin{definition}
	The \textit{block dynamics} $\{X_t\}$ with blocks $B_\west,B_\east\subset \L_{n, l}$ such that $E(B_\west) \cup E(B_\east)= E(\L_{n, l})$ is the discrete-time Markov chain that, at each $t$, picks $i$ uniformly at random from $\{\west,\east\}$ and updates the configuration in $E(B_i)$ with a sample from the stationary distribution of the chain conditional on the configuration of $X_t$ on $E^c(B_i)$.
	\end{definition}
\noindent
The spectral gap of the FK-dynamics on $\L_{n , l}$ is bounded from below
by the spectral of the block dynamics times the worst gap of the FK-dynamics in any $B_i$ with worst-case configuration on $E^c(B_i)$;
see Theorem~\ref{thm:block-dynamics} for a precise statement. With the choice of blocks in~\eqref{eq:potential-blocks}, applying this recursively, one would bound the spectral gap of the FK-dynamics on $\L_{n,l}$ by the gap of the block dynamics raised to a $\Theta(\log n)$ power. Hence, establishing Theorem~\ref{thm:main-thin-rect} would require an $\Omega(1)$ lower bound on the spectral gap of the block dynamics.

The mixing time and spectral gap of the block dynamics is typically bounded by showing that after the first block update in either $B_\west$ or $B_\east$, 
the configuration in $B_\west \cap B_\east$ will be such that it disconnects the influence of the configuration on $B_\west \setminus B_\east$ from $B_\east \setminus B_\west$ with probability $\Omega(1)$;
this then allows a standard coupling argument to be used to bound the mixing time. 
In the presence of long-range boundary connections, however,  it could be that no configuration on $B_\west \cap B_\east$ would disconnect the two sides from one another and facilitate coupling; see Figure \ref{figure:sketch:blocks}(a) for such an example. As such, our choices of blocks will depend on the boundary conditions and will be chosen to allow for the block dynamics to couple in $O(1)$ time, while ensuring that the blocks are still at most a fraction of the size of the original rectangle, so that after $O(\log n)$ recursive steps we arrive at a sufficiently small base scale.

In particular, we will show that for every realizable boundary condition $\xi$ on $\L_{n,l}$, there exists a choice of two blocks, whose widths are at most $\frac 45 n$, such that they are sufficiently isolated from one another in~$\xi$. As Figure~\ref{figure:sketch:blocks}(b) demonstrates, there are realizable boundary conditions that would force these blocks to not be single rectangles, as in~\eqref{eq:potential-blocks}, but rather collections of rectangular subsets of $\L_{n,l}$. 
Thus, our recursive argument will proceed instead on \emph{groups of rectangles}, 
$$\mathcal R = \bigcup R_i,\, \qquad \mbox{where} \qquad R_i=\llb a_i,b_i \rrb \times \llb 0,l\rrb \subset \L_{n,l}$$ 
are disjoint, with boundary conditions induced by $\xi$ and the configuration of the chain on $\L_{n,l}\setminus \mathcal R$.

We formally define groups of rectangles (Definition~\ref{def:group-of-rectangles}), and their boundary conditions in Section~\ref{subsec:groups-of-rectangles}. We consider next the notion of \textit{compatibility} of a group of rectangles $\mathcal R \subset \L_{n,l}$ with the boundary condition $\xi$. Roughly speaking, we say a group of rectangles $\mathcal R$ is compatible with $\xi$ if $\xi$ limits the boundary interactions between the rectangles of $\mathcal R$ for every possible configuration on $\L_{n,l}\setminus\mathcal R$; see Definition~\ref{def:compatible-bc}. 
In Section~\ref{subsec:sub-blocks}, we provide an algorithm (see Lemma~\ref{lem:splitting-algorithm}) that, for a group of rectangles $\mathcal R$ compatible with $\xi$, finds two suitable subsets for the block dynamics: these subsets will each be group of rectangles 
compatible with~$\xi$, of width between $1/5$ and $4/5$ of the width of $\mathcal R$.
This will allow us to induct on groups of rectangles compatible with $\xi$, while ensuring that number of recursive steps is $O(\log n)$. 
Specifically, our splitting algorithm will find interior and exterior subsets $\mathcal A_{\interior}$ and $\mathcal A_{\ext}$ with no boundary connections between the two; see Figure~\ref{figure:blocks}(a). These will be the cores of the two blocks for $\mathcal R$, but in order to bound the coupling time of the block dynamics, we want the two blocks to overlap, and therefore we enlarge $\mathcal A_{\interior}$ and $\mathcal A_{\ext}$ by $m=\Theta(\log l)$ to form the blocks $\mathcal R_{\interior}$ and $\mathcal R_\ext$ for the block dynamics; see Figure~\ref{figure:blocks}(b). 

Finally, in Section~\ref{subsec:block-dynamics}, we bound the coupling time of this block dynamics 
by some sufficiently large constant to conclude the proof. This follows by leveraging the fact that $p< p_c$ ($p>p_c$) to condition on the existence of certain disconnecting dual (primal) paths in $\mathcal R_\interior \cap \mathcal R_\ext$ that have $\Omega(1)$ probability. 

We emphasize that in order to push through this recursion, is will be crucial that our notion of compatibility with $\xi$ is strong enough to yield a uniform bound on this block dynamics coupling time, while being broad enough that the splitting algorithm always succeeds in finding sub-blocks that are themselves groups of rectangles compatible with $\xi$.

\subsection{Disconnecting intervals}
\label{subsec:disconnecting-intervals}

In this section, we introduce the notion of disconnecting intervals, one of the building blocks of our recursive proof of polynomial mixing. 
Recall that we use
$\b_{\north}\L_{n,l}$, $\b_{\east}\L_{n,l}$, $\b_{\south}\L_{n,l}$ and $\b_{\west}\L_{n,l}$
for the north, east, south and west boundaries of the 
rectangle $\L_{n,l}$, respectively.

\begin{definition}
	\label{def:disconnecting-interval}
	For a \emph{realizable} boundary condition $\xi$ on $\L_{n,l}$ that is free on $\b_{\east}\L_{n,l}\cup\b_{\south}\L_{n,l}\cup\b_{\west}\L_{n,l}$, an interval $\llb a,b \rrb\subset\llb0,n \rrb$
	is called \emph{disconnecting} of
	\begin{enumerate}
		\setlength{\itemsep}{5pt}
		\item \emph{free-type}: if there are no boundary connections in $\xi$ between $\llb a,b\rrb\times \{l\}$ and $\llb a,b\rrb^c \times \{l\}$;
		\item \emph{wired-type}: if there is a boundary component in $\xi$ that contains the vertices $(a,l)$ and $(b,l)$.
	\end{enumerate}
\end{definition}

\begin{figure}[t]
	\begin{center}
			\begin{tikzpicture}		
			\draw[draw=black] (1,0) rectangle (10.8,1);
			\draw [rounded corners]  (1.5,1)--(1.5,1.4)--(7.5,1.4)--(7.5,1);
			\draw [rounded corners]  (2.5,1)--(2.5,1.2)--(4,1.2)--(4,1);
			\draw [rounded corners]  (6,1.4)--(6,1);
			\draw [rounded corners]  (8,1)--(8,1.2)--(9.2,1.2)--(9.2,1);
			\draw [rounded corners]  (9.8,1)--(9.8,1.2)--(10.5,1.2)--(10.5,1);
			
			\draw[color=black,fill=black] (1.5,1) circle (.03);
			\draw[color=black,fill=black] (2.5,1) circle (.03);
			\draw[color=black,fill=black] (4,1) circle (.03);
			\draw[color=black,fill=black] (4.75,1) circle (.03);
			\draw[color=black,fill=black] (5.25,1) circle (.03);
			\draw[color=black,fill=black] (6,1) circle (.03);
			\draw[color=black,fill=black] (7.5,1) circle (.03);
			\draw[color=black,fill=black] (8,1) circle (.03);
			\draw[color=black,fill=black] (9.2,1) circle (.03);
			\draw[color=black,fill=black] (9.8,1) circle (.03);
			\draw[color=black,fill=black] (10.5,1) circle (.03);
			
			\node at (1.5,0.8) {{\tiny $a_0$}};			
			\node at (2.5,0.8) {{\tiny $a_1$}};
			\node at (4,0.8) {{\tiny $a_2$}};
			\node at (4.75,0.8) {{\tiny $a_3$}};
			\node at (5.25,0.8) {{\tiny $a_4$}};
			\node at (6,0.8) {{\tiny $a_5$}};
			\node at (7.5,0.8) {{\tiny $a_6$}};
			\node at (8,0.8) {{\tiny $a_7$}};
			\node at (9.2,0.8) {{\tiny $a_8$}};
			\node at (9.8,0.8) {{\tiny $a_9$}};
			\node at (10.5,0.8) {{\tiny $a_{10}$}};
			\end{tikzpicture}
		\end{center}

	\caption{The rectangle $\L_{n , l}$ with a boundary condition $\xi$ inducing disconnecting intervals. For example, $\llb a_1,a_4\rrb$, $\llb a_3, a_4\rrb$ and $\llb a_7,a_{10}\rrb$ are disconnecting intervals of free-type; $\llb a_1,a_2\rrb$, $\llb a_0,a_6\rrb$, $\llb a_7,a_{8}\rrb$ and $\llb a_9,a_{10}\rrb$ are of free-wired-type; $\llb a_0,a_5\rrb$ and $\llb a_5,a_6\rrb$ are of wired-type.}
	\label{figure:di}
	\label{fig:disc-int}
\end{figure}

Observe that an interval can be both of free-type and of wired-type if $(a,l)$ and $(b,l)$ are connected through $\xi$ but are not connected to any boundary vertex in $\llb a,b\rrb^c \times \llb 0, l \rrb$; in this case, we may refer to the interval as being of \emph{free-wired-type}; see Figure~\ref{fig:disc-int} for several examples. 

The following properties concerning the union and intersection of disconnecting intervals will be crucial to our proofs. 

\begin{lemma}\label{lem:disconnecting-unions}
	Let $\xi$ be a realizable boundary condition on $\Lambda_{n,l}$ that is free on $\b_{\south}\L_{n,l}\cup\b_{\east}\L_{n,l}\cup\b_{\west}\L_{n,l}$ and 
	let $a < b < c$. If both $\llb a,b\rrb$ and $\llb b,c\rrb$ are disconnecting intervals of wired-type, then so is $\llb a,c\rrb$.
	If both $\llb a,b\rrb$ and $\llb b+1,c\rrb$ are disconnecting intervals of free-type, then so is $\llb a,c\rrb$. 
\end{lemma}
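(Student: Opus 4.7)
The plan is to observe that both parts of the lemma follow directly from Definition \ref{def:disconnecting-interval} alone; neither planarity nor the realizability of $\xi$ plays any role beyond what is already built into the definition of disconnecting intervals, and there is no substantive obstacle to overcome.

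For the wired-type case, I would simply invoke transitivity of the partition $\xi$. The hypothesis yields a component of $\xi$ containing both $(a,l)$ and $(b,l)$ and a component containing both $(b,l)$ and $(c,l)$; since both components contain the common vertex $(b,l)$, they must coincide, and hence a single component of $\xi$ contains all three of $(a,l)$, $(b,l)$, and $(c,l)$. This is precisely what is required for $\llb a,c\rrb$ to be of wired-type.

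For the free-type case, I would argue by contradiction. Suppose $\llb a,c\rrb$ is not of free-type, so that $\xi$ contains a boundary connection between some $(x_1,l)$ with $x_1\in \llb a,c\rrb$ and some $(x_2,l)$ with $x_2\in \llb a,c\rrb^c$. The only point I need is that $\llb a,c\rrb = \llb a,b\rrb \sqcup \llb b+1,c\rrb$ is a \emph{disjoint} union, which is exactly why the statement pairs $\llb a,b\rrb$ with $\llb b+1,c\rrb$ rather than $\llb b,c\rrb$. Thus $x_1$ lies in exactly one of the two intervals, say $I\in \{\llb a,b\rrb,\llb b+1,c\rrb\}$, and since $\llb a,c\rrb^c \subseteq I^c$, we also have $x_2\in I^c$. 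The resulting boundary connection from $I\times\{l\}$ to $I^c\times\{l\}$ contradicts the free-type hypothesis on $I$, completing the proof.

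The two halves of the lemma use complementary set-theoretic facts: transitivity of the partition (which \emph{requires} the two intervals to share the endpoint $b$), and disjoint decomposition of $\llb a,c\rrb$ (which \emph{requires} them not to share $b$). These elementary closure properties will serve as atomic building blocks for the more delicate compatibility and splitting arguments in Sections~\ref{subsec:groups-of-rectangles}--\ref{subsec:sub-blocks}, where the real technical difficulty in proving Theorem~\ref{thm:main-thin-rect} lies.
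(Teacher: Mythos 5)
Your proof is correct and follows essentially the same reasoning as the paper's: transitivity of the partition $\xi$ via the shared endpoint $(b,l)$ for the wired-type case, and the disjoint decomposition $\llb a,c\rrb = \llb a,b\rrb \sqcup \llb b+1,c\rrb$ (with $\llb a,c\rrb^c$ contained in each of $\llb a,b\rrb^c$ and $\llb b+1,c\rrb^c$) for the free-type case. The only cosmetic difference is that you phrase the free-type argument as a contradiction while the paper states it directly.
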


\begin{proof}
	 If $\llb a,b\rrb$ and $\llb b,c\rrb$ are disconnecting intervals of wired-type, then by definition the vertices
	 $(a,l)$, $(b,l)$  and $(c,l)$ are all in the same component of $\xi$; hence
	 $\llb a,c\rrb$ is a disconnecting interval of wired-type.
	 If $\llb a,b\rrb$ and $\llb b+1,c\rrb$ are disconnecting intervals of free-type, 
	 then by definition there are no connections in $\xi$
	 between $\llb a,b\rrb$ and $\llb a,b\rrb^c \supset \llb a,c\rrb^c$
	 or between $\llb b+1,c\rrb$ and $\llb b+1,c\rrb^c \supset \llb a,c\rrb^c$.
	 Consequently, there are not connections in $\xi$ between $\llb a,c\rrb = \llb a,b\rrb \cup \llb b+1,c\rrb$ and $\llb a,c\rrb^c$, and $\llb a,c\rrb$ is thus a disconnecting interval of free-type.
\end{proof}

\begin{lemma}\label{lem:disconnecting-intersections-type}
	Let $\xi$ be a realizable boundary condition on $\Lambda_{n,l}$ that is free on $\b_{\south}\Lambda_{n,l} \cup \b_{\east}\Lambda_{n,l} \cup \b_{\west}\Lambda_{n,l}$.
	Suppose there exist $a<b\leq c<d$ such that $\llb a,c\rrb$ and $\llb b,d\rrb$ are disconnecting intervals. Then either both $\llb a,c\rrb$ and $\llb b,d\rrb$ are of free-type or both are of wired-type.
\end{lemma}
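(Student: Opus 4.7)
My plan is to prove a propagation-of-wiredness dichotomy: I will show that $\llb a, c \rrb$ is of wired-type if and only if $\llb b, d \rrb$ is of wired-type. Combined with the fact, immediate from Definition~\ref{def:disconnecting-interval}, that every disconnecting interval is of free-type or of wired-type, this dichotomy yields the stated alternative: either both intervals are of wired-type, or neither is, and in the latter case both must then be of free-type.

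The key step will be to exploit the strict overlap $a < b \le c < d$ to locate specific boundary vertices on opposite sides of each interval. My first claim is that $(c, l) \in \llb b, d \rrb \times \{l\}$ (since $b \le c \le d$) and $(a, l) \in \llb b, d \rrb^c \times \{l\}$ (since $a < b$). Then, if $\llb a, c \rrb$ is of wired-type, the $\xi$-component containing $(a, l)$ and $(c, l)$ is a direct witness of a boundary connection in $\xi$ between $\llb b, d \rrb \times \{l\}$ and its complement, contradicting $\llb b, d \rrb$ being of free-type. Since $\llb b, d \rrb$ is assumed to be a disconnecting interval, it must then be of wired-type.

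The reverse implication will follow from the symmetric inclusions $(b, l) \in \llb a, c \rrb \times \{l\}$ (since $a < b \le c$) and $(d, l) \in \llb a, c \rrb^c \times \{l\}$ (since $d > c$): a $\xi$-component joining $(b, l)$ to $(d, l)$ blocks $\llb a, c \rrb$ from being of free-type and therefore forces it to be of wired-type.

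I do not anticipate any real obstacle here. Unlike Lemma~\ref{lem:disconnecting-unions}, the argument does not require the planarity/non-crossing structure of realizable boundary conditions, nor does it need to invoke the free boundary hypothesis on the south, east, and west sides: the proof amounts to a direct unpacking of Definition~\ref{def:disconnecting-interval} once the correct vertex inclusions are read off from the overlap hypothesis $a < b \le c < d$.
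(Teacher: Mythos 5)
Your argument is correct and is essentially the paper's: both rest on the observation that a $\xi$-component witnessing wired-type for one interval necessarily contains a vertex inside and a vertex outside the other interval, so the other interval cannot be of free-type and (being disconnecting) must therefore be of wired-type. You phrase this as a direct biconditional $W_1 \Leftrightarrow W_2$, while the paper frames it as a proof by contradiction (assuming WLOG one interval is only of free-type and the other of wired-type), but the geometric step and the use of Definition~\ref{def:disconnecting-interval} are identical, and your remark that neither planarity nor the free-boundary hypothesis is actively invoked in this particular lemma is accurate.
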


\begin{proof}
	Suppose by way of contradiction 
	and without loss of generality 
	that $\llb a,c\rrb$  is only of free-type (i.e., free-type but not free-wired-type) and $\llb b,d\rrb$ is of wired-type. By definition, there exists a component of $\xi$ that contains both $(b,l)$ and $(d,l)$. 
	Since $a<b \leq c<d$,  there is therefore a connection in $\xi$ between $\llb a,c\rrb \times \{l\}$ and $\llb a,c\rrb^c \times \{l\}$. 
	Hence, $\llb a,c\rrb$ cannot be a disconnecting interval of free-type yielding the desired contradiction.\end{proof}

\begin{lemma}
	\label{lem:disconnecting-intersections}
	Let $\xi$ be a realizable boundary condition on $\Lambda_{n,l}$ that is free on $\b_{\south}\Lambda_{n,l} \cup \b_{\east}\Lambda_{n,l} \cup \b_{\west}\Lambda_{n,l}$. Suppose there exist $a<b<c<d$ such that $\llb a,c\rrb$ and $\llb b,d\rrb$ are disconnecting intervals. 
	\begin{enumerate}
		\setlength{\itemsep}{5pt}
		\item If $\llb a,c\rrb$ and $\llb b,d\rrb$ are both of \emph{wired-type}, then $\llb a,b\rrb, \llb b,c\rrb, \llb c,d\rrb$ and $\llb a,d\rrb$ are all disconnecting intervals of wired-type.
		\item If $\llb a,c\rrb$ and $\llb b,d\rrb$ are both of \emph{free-type}, then $\llb a,b-1\rrb, \llb b,c\rrb,\llb c+1,d\rrb$ and $\llb a,d\rrb$ are all disconnecting intervals of free-type.
	\end{enumerate}
\end{lemma}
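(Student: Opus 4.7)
The plan is to treat the two cases separately. Case~2 (free-type) is purely set-theoretic and follows directly from the definitions, while Case~1 (wired-type) genuinely requires the realizability of $\xi$ together with the free boundary on the south, east, and west sides, via a planarity argument.

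For Case~2, I would show each of the four claimed intervals is free-type by contradiction. Take $\llb a,b-1\rrb$: any putative $\xi$-connection from $u\in\llb a,b-1\rrb$ to $v\in\llb a,b-1\rrb^c=\llb 0,a-1\rrb\cup\llb b,n\rrb$ falls into one of two cases. If $v\in\llb 0,a-1\rrb\cup\llb c+1,n\rrb$, then $u\in\llb a,c\rrb$ and $v\in\llb a,c\rrb^c$, contradicting free-type of $\llb a,c\rrb$. If instead $v\in\llb b,c\rrb\cup\llb c+1,d\rrb\subseteq\llb b,d\rrb$, then $u\in\llb 0,b-1\rrb\subseteq\llb b,d\rrb^c$, contradicting free-type of $\llb b,d\rrb$. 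Together these cases cover all of $\llb 0,a-1\rrb\cup\llb b,n\rrb$. The intervals $\llb b,c\rrb$, $\llb c+1,d\rrb$, and $\llb a,d\rrb$ are handled by analogous dichotomies, each forced by splitting the complement of the target interval across the cutpoint $c$ (resp.\ $b$) into a piece lying in $\llb a,c\rrb^c$ and a piece lying in $\llb b,d\rrb$, and contradicting the corresponding hypothesis in each subcase.

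For Case~1, since $\llb a,c\rrb$ and $\llb b,d\rrb$ are wired-type, we have $(a,l)\sim(c,l)$ and $(b,l)\sim(d,l)$ in the partition~$\xi$, and the goal reduces to showing all four points lie in a common component of~$\xi$; once this is established, the wired-type property for each of $\llb a,b\rrb$, $\llb b,c\rrb$, $\llb c,d\rrb$, $\llb a,d\rrb$ is immediate. I would invoke realizability: pick a realizing configuration~$\omega$ on $E(\Z^2)\setminus E(\L_{n,l})$, which provides open paths $\gamma_1$ from $(a,l)$ to $(c,l)$ and $\gamma_2$ from $(b,l)$ to $(d,l)$ in~$\omega$. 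Since $\xi$ is free on $\b_\south\L_{n,l}\cup\b_\east\L_{n,l}\cup\b_\west\L_{n,l}$, neither $\gamma_1$ nor $\gamma_2$ can meet any non-north boundary vertex (otherwise such a vertex would sit in the same $\xi$-component as $(a,l)$ or $(b,l)$, contradicting freeness). Compactifying the plane to $S^2$ and viewing $\L_{n,l}$ as a closed disk, the exterior is also a topological disk whose boundary cycle contains $(a,l),(b,l),(c,l),(d,l)$ in that cyclic order along the north side. The chords $\{(a,l),(c,l)\}$ and $\{(b,l),(d,l)\}$ are therefore interleaved along this boundary circle, so by the Jordan curve theorem the arcs $\gamma_1$ and $\gamma_2$ must share an interior vertex of~$\omega$, forcing $(a,l)\sim(b,l)$ in~$\xi$.

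The main obstacle I anticipate is making the planarity step in Case~1 fully rigorous at the discrete-graph level rather than in the topological disk: in particular, choosing $\gamma_1,\gamma_2$ to be simple so that a topological crossing of their planar images corresponds to an actual shared vertex of the graph~$\omega$, and handling the (benign) possibility that the paths pass through additional north-boundary vertices, which by realizability must then already lie in the same $\xi$-component and therefore cause no trouble.
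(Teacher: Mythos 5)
Your proof is correct and follows essentially the same approach as the paper's: Case 2 is nearly identical to the paper's contradiction argument, splitting the complement of each target interval and invoking free-type of $\llb a,c\rrb$ or of $\llb b,d\rrb$ as appropriate. Case 1 is the same planarity argument that the paper compresses into the single sentence ``by the planarity of realizable boundary conditions,'' which you correctly unpack via the interleaved-chord/Jordan-curve picture in the exterior disk; the discrete-graph subtleties you flag are real but standard and do not present a genuine obstacle.
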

\begin{proof}
	For the first part, suppose that both $\llb a,c \rrb$ and $\llb b,d \rrb$ are disconnecting intervals of wired-type. 
	By definition, the vertices 
	$(a,l)$ and $(c,l)$ are in the same component of $\xi$, as are $(b,l)$ and $(d,l)$.
	 By the planarity of realizable boundary conditions, it must be the case that these two components of $\xi$ are indeed the same, and therefore $(a,l),(b,l),(c,l),(d,l)$ are all in the same boundary component of $\xi$. As such, $\llb a,b \rrb$, $\llb b,c \rrb$, $\llb c,d \rrb$ and $\llb a,d \rrb$ are all disconnecting intervals of wired-type. 
	
	For the second part, suppose first
	by way of contradiction that $\llb a,b-1 \rrb$ is not a disconnecting
	interval of free-type. Then there must be a boundary component in $\xi$
	with vertices in
	$\llb a,b-1 \rrb\times\{l\}$ and $\llb a,b-1 \rrb^{c}\times\{l\}$.
	If it is a component containing a vertex in $\llb a,c \rrb^{c}\times\{l\}$,
	it would violate the fact that $\llb a,c \rrb$ is disconnecting of free-type, while
	if it is a component containing a vertex in $\llb b,c \rrb\times\{l\}$,
	it would violate that $\llb b,d \rrb$ is disconnecting of free-type as $\llb a,b-1\rrb \subset \llb b,d\rrb^c$. By analogous
	reasoning $\llb c+1,d \rrb$, $\llb b,c\rrb$ and $\llb a,d\rrb$ are all disconnecting intervals
	of free-type. 
\end{proof}

\subsection{Groups of rectangles}
\label{subsec:groups-of-rectangles}

In this section, we define \emph{groups of rectangles} and their boundary conditions, which constitute the other building blocks of our recursive proof of polynomial mixing for the FK-dynamics on thin rectangles. As in the previous section, 
we consider an $n\times l$ rectangle $\L_{n,l}=\llb0,n\rrb\times\llb0,l\rrb$
with a realizable boundary condition $\xi$ that is free on $\b_{\east}\L_{n,l}\cup\b_{\south}\L_{n,l}\cup\b_{\west}\L_{n,l}$.

Henceforth, we take 
$$
m=m(l)= C_{\star}\log l\,,
$$
where $C_{\star}$ is a large constant such that $C_{\star}>c^{-1}$ which we choose later,
with $c$ being the constant from~\eqref{eq:EDC}. 

A \emph{rectangular subset} $R\subset \L_{n,l}$ is a rectangle of the form $\llb a,b \rrb \times \llb0,l \rrb$ for some $0\leq a <b\leq n$. For such a rectangular subset, we denote by $W(R)$ its width; i.e., $W(R)=b-a$. 
For the union of distinct and disjoint rectangular subsets $\mathcal R = \bigcup_{i=1}^{N(\mathcal R)} R_i$, where $R_i = \llb a_i, b_i \rrb \times \llb 0,l\rrb$ and $a_1 <b_1 <a_{2}<\dots<a_{N(\mathcal R)} < b_{N(\mathcal R)} $, its width is defined by $W(\mathcal R) = \sum_{i=1}^{N(\mathcal R)} W(R_i)$ and its boundary is given by $\partial \mathcal R = \bigcup_{i=1}^{N(\mathcal R)} \partial R_i$. Moreover, we let $\partial_\north \mathcal R = \bigcup_{i=1}^{N(\mathcal R)} \partial_\north R_i$ and similarly define $\partial_\south \mathcal R$; on the other hand, $\partial_\east \mathcal R$ will be the eastern boundary of the right-most rectangle $R_i$ and $\partial_\west \mathcal R$, the western boundary of the left-most. 

\begin{definition}
\label{def:group-of-rectangles}A \emph{group of rectangles} $\mathcal{R}=\bigcup_{i=1}^{N(\mathcal{R})}R_{i}$
is the union of $N(\mathcal{R})$ disjoint rectangular subsets $R_{i}$ of
$\Lambda_{n,l}$ such that 
$W(R_{i})\geq2m$ for every $i=1,...,N(\mathcal{R})$.
\end{definition}

\begin{remark}
	The requirement that $W(R_{i})\geq2m$ for every $i$, which may seem arbitrary at the moment, is because in our recursive argument, we want our groups of rectangles $\mathcal R$ to have interiors that are not influenced by the configuration on $E(\L_{n,l})\setminus E(\mathcal R)$. When a group of rectangles has a thin constituent rectangle $R_i$, the influence of the outside configuration can permeate through all of $R_i$.     
\end{remark}

We will be considering blocks dynamics with blocks consisting of groups of rectangles. 
If we want to update the configuration on a group of rectangles $\mathcal R \subset \L_{n,l}$, the boundary condition induced on $\partial \mathcal R$ will consist of the boundary condition on $\L_{n,l}$ which will be fixed to be $\xi$, together with a fixed random-cluster configuration $\omega_{\mathcal R^c}$ on $E^c(\mathcal R) =  E(\L_{n,l})\setminus E(\mathcal R)$. 
Hence, our boundary conditions on $\mathcal R$ will be of this form, and we denote them by the pair $\zeta = (\xi,\omega_{\mathcal R^c})$.

\subsection{Compatible boundary conditions}\label{subsec:compatible-bc}

We now define the notion of \textit{compatibility} of groups of rectangles with boundary conditions $\xi$. This is crucial in our inductive argument; our algorithm for finding suitable blocks for the block dynamics will guarantee that if the starting group rectangles is compatible with respect to a given boundary condition, so will each of the blocks, enabling an inductive procedure. 

Let $\xi$ be a realizable boundary condition 
on $\b\Lambda_{n,l}$ that is free on $\b_{\south}\Lambda_{n,l} \cup \b_{\east}\Lambda_{n,l} \cup \b_{\west}\Lambda_{n,l}$,
and free in all vertices in $\partial_{\north}\Lambda_{n,l}$ at distance at most $m$ from $\partial_{\east}\Lambda_{n,l}\cup\partial_{\west}\Lambda_{n,l}$ (i.e., they appear as singletons in the corresponding boundary partition). 
This latter requirement for vertices near the corners of $\L_{n,l}$ allows us to always choose blocks in our recursion whose boundaries are at least distance $m$ from $\b _\east \L_{n,l}\cup \b_\west \L_{n,l}$; this simplifies our analysis.

The following will be the distinguishing property of our choice of blocks for the block dynamics.  

\begin{definition}
	\label{def:compatible-bc}
	Let $\mathcal R= \bigcup_{i=1}^{N(\mathcal R)} R_i$ 
	be a group of rectangles with
	$R_{i}=\llb a_{i},b_{i}\rrb\times\llb0,l\rrb$
	and $a_1 < b_1 < a_2 < b_2 < \ldots < a_{N(\mathcal R)} < b_{N(\mathcal R)}$ (with $b_i - a_i \geq 2m$ for all $i$). We say
	$\mathcal R$
	is \emph{compatible} with $\xi$, if 
	\begin{enumerate}
		\setlength{\itemsep}{5pt}
		\item Between every two consecutive rectangles $R_{i}=\llb a_{i},b_{i}\rrb\times\llb0,l\rrb$
		and $R_{i+1}=\llb a_{i+1},b_{i+1}\rrb\times\llb0,l\rrb$
		the interval $\llb b_{i}-m,a_{i+1}+m \rrb$ is a disconnecting interval; and
		\item The interval $\llb a_1+m,b_{N(\mathcal R)}-m \rrb$ is also a disconnecting interval. 
	\end{enumerate}
\end{definition}
\begin{remark}
	It is clear from the definition that $\L_{n, l}$ is compatible with $\xi$: the first condition is vacuous, while the second is satisfied since all vertices a distance
	at most $m$ from $\b_{\east}\L_{n,l}\cup\b_{\west}\L_{n , l}$
	are free. Observe also that $b_i - a_i \ge 2m$ for every $i$ and so 
	$b_{N(\mathcal R)}-m \ge a_1+m$; see Definition~\ref{def:group-of-rectangles}.
\end{remark}

\subsection{Defining the blocks for the block dynamics}
\label{subsec:sub-blocks}

Now that we have introduced disconnecting intervals, group of rectangles and the notion of compatibility, we describe our algorithm for picking two blocks $\mathcal R_\interior,\mathcal R_\ext$ for the block dynamics, based on the boundary condition $\xi$.
Recall the definitions of width $W(\cdot)$ of a rectangular subset and width of a collection of rectangles $W(\mathcal R)= \sum _{i=1}^{N(\mathcal R )} W(R_i)$. It will also be convenient to have the following notation $\partial_\parallel \mathcal R = \bigcup_{i=1}^{N(\mathcal R)} \partial_\west R_i \cup \partial_\east R_i$ for the vertical sides of the group of rectangles $\mathcal R = \bigcup_{i=1}^{N(\mathcal R)} R_i$.  The following lemma provides the 
basis
for our splitting algorithm.
\begin{lemma}
\label{lem:splitting-algorithm}
Let $\xi$ be a realizable boundary condition 
on $\b\Lambda_{n,l}$ that is free on $\b_{\south}\Lambda_{n,l} \cup \b_{\east}\Lambda_{n,l} \cup \b_{\west}\Lambda_{n,l}$
and free in all vertices in $\b_{\north}\Lambda_{n,l}$ at distance at most $m$ from $\partial_{\east}\Lambda_{n,l}\cup\partial_{\west}\Lambda_{n,l}$.
For every group of rectangles $\mathcal{R}$
compatible with $\xi$, with $W(\mathcal R)\geq 100m$, there exists a disconnecting interval $\llb c_\star,d_\star \rrb$ 
such that both $(c_\star,l)$ and $(d_\star,l)$ are in $\partial_\north \mathcal R$, are distance at least $m$
from $\partial_\parallel \mathcal R$,
and 
\begin{align*}
\frac{1}{4}W(\mathcal{R})\leq W(\mathcal{R}\cap(\llb c_\star,d_\star \rrb\times\llb 0,l \rrb))\leq\frac{3}{4}W(\mathcal{R})\,.
\end{align*}
\end{lemma}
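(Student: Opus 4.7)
The plan is to construct $(c_\star,d_\star)$ from the canonical disconnecting intervals supplied by the compatibility hypothesis together with the planar structure of $\xi$ coming from realizability. By compatibility condition 2, $I_0:=\llb a_1+m,b_N-m\rrb$ is disconnecting; by condition 1, each $J_i:=\llb b_i-m,a_{i+1}+m\rrb$ ($1\le i\le N-1$) is disconnecting. All endpoints of these intervals already belong to the admissible set $V:=\bigcup_{i=1}^{N}\llb a_i+m,b_i-m\rrb$, which automatically places them on $\partial_\north \mathcal R$ at distance at least $m$ from $\partial_\parallel \mathcal R$. Thus the geometric part of the lemma is handled by choosing $c_\star,d_\star\in V$, and it remains to balance the disconnecting property against the width constraint.

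First I would set up the width bookkeeping. Parametrizing $v\in V$ from $a_1+m$ to $b_N-m$ and setting $w(v):=W(\mathcal R\cap\llb a_1+m,v\rrb\times\llb 0,l\rrb)$, a direct computation shows that $w$ increases by $1$ between consecutive admissible positions within the same rectangle and by exactly $2m$ at each jump from $b_i-m$ to $a_{i+1}+m$, starting at $w(a_1+m)=0$ and ending at $w(b_N-m)=W(\mathcal R)-2m$. The hypothesis $W(\mathcal R)\ge 100m$ makes the step size at most $2m$, much smaller than the length $W(\mathcal R)/2$ of the target window $[W(\mathcal R)/4,3W(\mathcal R)/4]$, so a discrete intermediate-value argument yields a large contiguous block $V_\star\subset V$ of admissible positions with $w(v)$ inside the window.

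Next I would argue that $V_\star$ contains a $v$ that can serve as the right endpoint of a disconnecting interval anchored at $a_1+m$ (or, by a symmetric argument, a left endpoint anchored at $b_N-m$). Here I split on the type of $I_0$ furnished by Lemma~\ref{lem:disconnecting-intersections-type}. If $I_0$ is of wired-type, then $(a_1+m,l)$ and $(b_N-m,l)$ lie in a common boundary component $K_0$ of $\xi$; any other vertex of $K_0$ in $V\cap V_\star$ defines with $a_1+m$ a wired-type disconnecting sub-interval, and if the vertices of $K_0$ alone leave gaps in $V_\star$ I would cascade the wired-type $J_i$'s with $I_0$ via Lemma~\ref{lem:disconnecting-unions} to fill them. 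If $I_0$ is of free-type, realizability forces the components of $\xi$ inside $I_0$ to form a non-crossing family (their spans being pairwise nested or disjoint), and the admissible free-type sub-intervals correspond to unions of consecutive pieces of this family; combining these with the $J_i$'s via Lemmas~\ref{lem:disconnecting-unions} and~\ref{lem:disconnecting-intersections} produces disconnecting candidates of the form $\llb a_1+m,b_i-m\rrb$ or $\llb a_1+m,a_{j+1}+m\rrb$ covering $V_\star$.

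The main obstacle will be the edge case in which some component $K$ of $\xi$ straddles $V_\star$ via an arc bypassing every $J_i$ in its span from outside, so that no single $J_i$ alone produces a clean separation near the required cut point. In that case the extreme vertices $(l_K,l),(r_K,l)$ of $K$ themselves define a wired-type disconnecting interval $\llb l_K,r_K\rrb$ that I would use directly; these endpoints may need to be shifted by at most $O(m)$ to land in $V$, and the slack afforded by $W(\mathcal R)\ge 100m$ in the width bookkeeping guarantees that this shift does not drive the final width outside $[W(\mathcal R)/4,3W(\mathcal R)/4]$.
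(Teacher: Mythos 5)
Your plan restricts to the admissible set $V:=\bigcup_{i=1}^{N}\llb a_i+m,b_i-m\rrb$ from the outset, building the geometric constraints (endpoints in $\partial_\north\mathcal R$ at distance at least $m$ from $\partial_\parallel\mathcal R$) into the search; the paper instead first finds a rough disconnecting candidate anywhere on $\partial_\north\mathcal R$ and afterwards shifts each endpoint by at most $m$ into the admissible set. That reversal of order is a legitimate simplification, and your width bookkeeping via $w$ is sound. The gap is in the middle step: you insist on producing a disconnecting interval anchored at one of the two fixed points $a_1+m$ or $b_N-m$, and this can fail outright.

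Concretely, take $\mathcal R=\L_{n,l}$ (a single rectangle, so there are no $J_i$'s to cascade) and let $\xi$ restricted to $\partial_\north\L_{n,l}$ have a single non-trivial component $K_0=\{(m+1,l),(n-m-1,l)\}$, with all other northern vertices free singletons; this $\xi$ is realizable and satisfies the corner hypothesis. Then $I_0=\llb m,n-m\rrb$ is disconnecting of free-type, but for any $v$ strictly between $m+1$ and $n-m-1$ the interval $\llb m,v\rrb$ is neither free-type (since $K_0$ has vertices on both sides of $v$) nor wired-type (since $(m,l)$ is a singleton). So the only disconnecting intervals anchored at $a_1+m=m$ are degenerate or have width essentially $W(\mathcal R)-2m$, far outside $[\frac{1}{4}W(\mathcal R),\frac{3}{4}W(\mathcal R)]$; the same holds at $b_N-m$. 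Your ``main obstacle'' fallback --- take the span $\llb l_K,r_K\rrb$ of a straddling component $K$ --- does not rescue this: here $\llb l_{K_0},r_{K_0}\rrb=\llb m+1,n-m-1\rrb$ has width about $W(\mathcal R)-2m$, which is larger than $\frac{3}{4}W(\mathcal R)$ when $W(\mathcal R)\ge 100m$, and no $O(m)$ shift of its endpoints repairs that. The missing ingredient is a recursion into the \emph{interior} of the straddling wired pair, which is exactly what the paper does: it selects the minimal wired pair whose span has relative width $>\frac{2}{3}$, observes (using minimality together with the non-crossing property of realizable boundary conditions) that every wired pair strictly inside it has relative span-width $<\frac{1}{3}$, partitions the strict interior into maximal free-wired-type spans (each of relative width $<\frac{1}{3}$), and merges consecutive ones via Lemma~\ref{lem:disconnecting-unions} until the accumulated width lands in $[\frac{1}{3}W(\mathcal R),\frac{2}{3}W(\mathcal R)]$; the subsequent shift by $\le m$ then only degrades this to $[\frac{1}{4}W(\mathcal R),\frac{3}{4}W(\mathcal R)]$. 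In the example above this yields a free-type interval $\llb m+2,v\rrb$ for suitable $v$. Without this interior recursion your argument does not close.
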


We pause to comment on why a disconnecting interval with such properties provides the desired blocks for the block dynamics.  
The interval $\llb c_\star,d_\star\rrb$ from the lemma will be used to define $\mathcal A_\interior=\mathcal R \cap (\llb c_\star,d_\star\rrb \times \llb 0,l\rrb)$ and $\mathcal A_\ext = \mathcal R \setminus \mathcal A_\interior$; their enlargements by $m$ will form the blocks $\mathcal R_\interior$ and $\mathcal R_\ext$ (see Figure~\ref{figure:blocks}). The requirement that $W(\mathcal A_{\interior})$ be a fraction of $W(\mathcal R)$ bounded away from $0$ and $1$ is so that we only recurse $O(\log n)$ times before reaching small enough widths. The requirement that the corners of $\llb c_\star,d_\star\rrb\times \llb 0,l\rrb$ are a distance at least $m$ from $\partial_\| \mathcal R$ is so that when we enlarge the sets $\mathcal A_{\interior},\mathcal A_\ext$ by $m$, we do not overflow beyond the rectangles containing $(c_\star,l)$ and $(d_\star,l)$. Crucially, our ability to pick disconnecting segments that satisfy this latter requirement will be guaranteed by the compatibility of $\mathcal R$ with $\xi$. 

\begin{proof}[Proof of Lemma~\ref{lem:splitting-algorithm}]
We begin by finding a candidate disconnecting interval $\llb c,d \rrb$ with  $(c,l)$, $(d,l)\in \partial_\north \mathcal R$
satisfying: 
\begin{align}
\label{eq:slit}
\frac{1}{3}W(\mathcal{R}) \leq W(\mathcal{R}\cap(\llb c,d \rrb\times\llb 0,l \rrb)) \leq \frac{2}{3}W(\mathcal{R})\,. 
\end{align}
In the second part of the proof we show how to modify 
the interval $\llb c,d \rrb$ to obtain a 
disconnecting interval $\llb {c_\star},{d_\star} \rrb$ with the added property that 
both $(c_\star,l)$ and $(d_\star,l)$ are distance at least $m$
from $\partial_\parallel\mathcal R$.

If there exist a pair of vertices $(x,l),(y,l)\in\partial_{\north}\mathcal{R}$
such that 
$
\frac{1}{3}W(\mathcal{R}) \leq W(\mathcal{R}\cap(\llb x,y \rrb\times\llb 0,l \rrb)) \leq \frac{2}{3}W(\mathcal{R})
$
with
$(x,l)$ connected to $(y,l)$ through
$\xi$,
then we take $c = x$, $d = y$; that is, we use $\llb c,d \rrb = \llb x,y \rrb$ as our candidate disconnecting interval. 
Suppose otherwise that there does not exist any such boundary connection: then every
pair $(x,l),(y,l)\in \partial_\north \mathcal R$ connected through $\xi$ is such that
\begin{align}
\label{eq:assump-alg}
W(\mathcal{R}\cap(\llb x,y \rrb\times\llb 0,l \rrb))<\frac{1}{3}W(\mathcal{R})\,,\qquad\mbox{or}\qquad & W(\mathcal{R}\cap(\llb x,y \rrb\times\llb 0,l \rrb))>\frac{2}{3}W(\mathcal{R})\,.
\end{align}
If the latter holds, then there is a pair, say $(x_0,l),(y_0,l)\in \partial_\north \mathcal R$, for which the latter holds with a minimal width.
The interval
$\llb x_0, y_0\rrb$ would be a disconnecting interval of wired-type and
there is no other vertex $(z,l) \in \partial_\north \mathcal R$ with $z \in \llb x_0+1, y_0-1\rrb$
connected to $(x_0,l)$ and $(y_0,l)$ in $\xi$ since if there were such a $z$ it would violate the assumption that $\llb x_0, y_0\rrb$ is of minimal width with $W(\mathcal{R}\cap(\llb x_0,y_0 \rrb\times\llb 0,l \rrb))>\frac{2}{3}W(\mathcal{R})$
or that every pair of vertices  $(x,l),(y,l)\in\partial_{\north}\mathcal{R}$
connected in $\xi$ satisfy \eqref{eq:assump-alg}.
Consequently, 
all other connections through $\xi$ between vertices 
$(x_1,l),(y_1,l) \in \b_\north \mathcal{R}\cap(\llb x_{0}+1,y_{0}-1 \rrb\times\llb 0,l \rrb)$ will be such that
$$
W(\mathcal{R}\cap(\llb x_1,y_1 \rrb\times\llb 0,l \rrb))<\frac{1}{3}W(\mathcal{R})\,.
$$
We can then partition the vertices of $\partial_\north \mathcal R \cap (\llb x_0+1,y_0-1\rrb\times \{l\})$ into disjoint disconnecting intervals of  free-wired-type using the following procedure: 
\begin{enumerate}
	\setlength{\itemsep}{5pt}
\item Let $\rho=\{C_1,\dots,C_k\}$ be the partition of the vertices of $\partial_\north \mathcal R \cap (\llb x_0+1,y_0-1\rrb\times \{l\})$ induced by the boundary condition $\xi$; every $C_i$ corresponds to a distinct connected component of $\xi$;
 \item For each $C_i$, consider the disconnecting interval $L_i$ of free-wired-type determined by the left-most and right-most vertices of $C_i$ in $\partial_\north \mathcal R \cap (\llb x_0+1,y_0-1\rrb\times \{l\})$. Notice that some of the $C_i$'s may be singletons, which we view as  disconnecting intervals of the free-wired-type; 
\item Let $\{L_{i_1}, L_{i_2},\dots, L_{i_\ell}\}$ be those disconnecting intervals which are maximal, in the sense that there does not exist $j$ and $k$ such that $L_{i_j} \subset L_k$.
\end{enumerate} 
The set of disconnecting intervals $\{L_{i_1}, L_{i_2},\dots, L_{i_\ell}\}$ 
partitions $\llb x_0 + 1,y_0 - 1\rrb$ into disjoint disconnecting intervals of free-wired-type with the property that $W(\mathcal R \cap (L_{i_j}\times \llb 0,l\rrb)) \leq \frac{1}{3}W(\mathcal{R})$ for every $j \in \{1,\dots,\ell\}$. 
We can then use Lemma~\ref{lem:disconnecting-unions} to merge adjacent disconnecting intervals until we obtain a candidate disconnecting interval $\llb c,d \rrb\subset \llb x_0,y_0\rrb$ (of free-type), having width $W(\mathcal R \cap (\llb c,d \rrb\times \llb 0,l \rrb) \in [\frac{1}{3} W(\mathcal R), \frac 23 W(\mathcal R)]$. 

Now that we have found a candidate disconnecting interval $\llb c,d \rrb$ satisfying~\eqref{eq:slit}, we modify it to obtain a disconnecting interval $\llb c_\star,d_\star\rrb$ with the property that both $(c_\star,l)$ and $(d_\star,l)$ are distance at least $m$ from $\partial_\parallel \mathcal R$.

If $(c,l)$ is at distance at least $m$ from $\partial_\parallel \mathcal R$, set $c_\star= c$, and similarly if $(d,l)$ is at distance at least $m$ from $\partial_\parallel \mathcal R$, then set $d_\star= d$. 
Otherwise, suppose $(c,l)$ is at distance less than
$m$ from $\partial_{\west}R_{i}$
for some constituent rectangular subset $R_i= \llb a_i,b_i\rrb \times \llb 0,l\rrb$ of $\mathcal R$.
Since $\mathcal{R}$ is compatible with $\xi$,
the interval $\mathcal{I}_c = \llb b_{i-1}-m,a_i+m\rrb$ is a disconnecting interval,
and we set 
\begin{align*}
c_\star= \Bigg\{\begin{array}{ll}
        a_{i}+m\,, & \text{ if } \mathcal{I}_c\, \mbox{ is of wired-type, or $i=1$, or $W(R_i)=2m$;} \\
        a_i+m+1\,, & \text{ if } \mathcal{I}_c\, \mbox{ is only of free-type, and $W(R_i) > 2m$;}
        \end{array}
\end{align*}
note that the first case includes when $\mathcal I_c$ is of free-wired-type, whereas the second case only applies when the interval is of free-type and not of wired-type. 
When $(c, l)$ is instead at distance less than $m$ from $\partial_\east R_i$ for some $i$, then we simply set $c_\star= b_i -m$. 

Symmetrically, if $(d,l)$ is at distance less than $m$ from $\partial_\east R_i$ for some $R_i= \llb a_i,b_i\rrb \times \llb 0,l\rrb$, let $\mathcal{I}_d = \llb b_{i}-m,a_{i+1}+m\rrb$
\begin{align*}
d_\star= \Bigg\{\begin{array}{ll}
        b_{i}-m\,, & \text{ if } \mathcal{I}_d\, \mbox{ is of wired-type, or $i=N(\mathcal R)$, or $W(R_i)=2m$;} \\ 
        b_i-m-1 \,, & \text{ if } \mathcal{I}_d\, \mbox{ is only of free-type, and $W(R_i) > 2m$.}
        \end{array}
\end{align*} 
When $(d,l)$ is at distance less than $m$ from $\partial_\west R_i$, let $d_\star= a_i +m$. To see that this process is well-defined, notice that since $W(R_i)\geq 2m$ for every $i$, the points $(c,l), (d,l)$ cannot be both less than $m$ away from $\partial_\east R_i$ and less than $m$ away from $\partial_\west R_i$.

We claim that in all of these cases the interval $\llb c_\star,d_\star \rrb$ is a disconnecting interval. The fact that $(c_\star,l), (d_\star,l)\in \partial_\north \mathcal R$ are a distance at least $m$ away from $\partial_\parallel \mathcal R$ follows directly from the construction. 

First, observe that when $(c,l), (d,l)$ are both a distance at least $m$ from $\partial_\parallel \mathcal R$, then we set $c_\star= c$ and $d_\star= d$; in this case $\llb c_\star, d_\star\rrb$ is disconnecting since $\llb c, d\rrb$ was chosen to be disconnecting.

 Next suppose that $d$ was at least a distance $m$ from $\partial_\parallel \mathcal R$ while $c$ was at distance less than $m$ from $\partial_\west R_i$ for some $i$. In this setting, we establish that $\llb c_\star,d_\star\rrb$ is a disconnecting interval by considering the following four cases:
 
 \smallskip 
\emph{Case 1: $i=1$.} \ \ Note that $\llb a_1 + m, b_{N(\mathcal R)}-m\rrb$ and $\llb c, d \rrb$ are disconnecting intervals by compatibility and construction, respectively. 
Also, 
$W(\mathcal R\cap (\llb c, d\rrb)) \geq \frac{100 m}{3}$ since $W(\mathcal R)\geq 100m$. Hence, 
$ c < a_1+m < d<b_{N(\mathcal R)} - m$ and Lemma~\ref{lem:disconnecting-intersections} implies that $\llb c_\star,d_\star \rrb= \llb a_1 +m , d\rrb$ is disconnecting. 

\smallskip 
\emph{Case 2: $i>1$ and $\llb b_{i-1} - m, a_i +m\rrb$ is of wired-type.} \ \  Since $W(\mathcal R\cap (\llb c, d\rrb)) \geq \frac{100 m}{3}$, we have $b_{i-1}-m<c <a_i+m < d$. Then, by Lemma~\ref{lem:disconnecting-intersections-type}, $\llb c,d\rrb$ is a disconnecting interval of wired-type, and Lemma~\ref{lem:disconnecting-intersections} implies that $\llb c_\star,d_\star \rrb  = \llb a_{i}+m, d\rrb$ is a disconnecting interval of wired-type. 
 
 \smallskip 
 \emph{Case 3: $i>1$, $\llb b_{i-1} - m, a_i +m\rrb$ is of free-type and $W(R_i) > 2m$.} \ \
We again have $b_{i-1}-m <c <a_i +m <d$, and by Lemma~\ref{lem:disconnecting-intersections-type} $\llb c, d \rrb$ is a disconnecting interval of free-type. Therefore, by Lemma~\ref{lem:disconnecting-intersections}, $\llb c_\star , d_\star\rrb = \llb a_i + m+1 , d\rrb$ is a disconnecting interval of free-type.  

\smallskip \emph{Case 4: $i>1$, $\llb b_{i-1} - m, a_i +m\rrb$ is only of free-type and $W(R_i) = 2m$.} \ \
 In this case, it must be that $i <N(\mathcal R)$. Therefore, by the compatibility of $\mathcal R$ and $\xi$, $\llb b_i - m, a_{i+1}+m \rrb = \llb a_i+m, a_{i+1}+m\rrb$ is also a disconnecting interval.  In fact, by Lemma~\ref{lem:disconnecting-intersections-type},
 $\llb b_i - m, a_{i+1}+m \rrb$ is a disconnecting interval of free-type. Applying Lemma~\ref{lem:disconnecting-intersections} with respect to $\llb a_i + m+1 , d\rrb$ and $\llb a_i + m, a_{i+1}+m\rrb$ implies $\llb c_\star,d_\star\rrb = \llb a_i+m, d \rrb$ is a disconnecting interval.

 \smallskip 
Suppose otherwise that $c$ was at distance less than $m$ from $\partial_\east R_i$ for some $i$, and $d$ was still at least a distance at least $m$ from $\partial_\parallel \mathcal R$. In this case, $W(\mathcal R\cap (\llb c, d\rrb)) \geq \frac{100 m}{3}$ implies $N(\mathcal R)>1$ as well as $i<N(\mathcal R)$. 
Moreover, $\llb b_{i} - m, a_{i+1} +m\rrb$ is a disconnecting interval by the compatibility of $\mathcal R$ and $\xi$.
Since $b_{i}-m<c <a_{i+1}+m < d$,
Lemma~\ref{lem:disconnecting-intersections-type} then implies that 
when $\llb b_{i} - m, a_{i+1} +m\rrb$ is of wired-type (resp., of free-type)
then $\llb c,d\rrb$ is also of wired-type (resp., of free-type), and therefore by Lemma~\ref{lem:disconnecting-intersections}, $\llb c_\star,d_\star \rrb  = \llb b_{i}-m, d\rrb$ is a disconnecting interval of wired-type (resp., of free-type).

 The symmetric cases where $(c,l)$ is a distance at least $m$ from $\partial_\parallel \mathcal R$, and $(d,l)$ is a distance less than $m$ from $\partial_\parallel \mathcal R$ can be checked analogously. The remaining case in which both $( c,l)$ and $(d,l)$ are a distance less than $m$ from $\partial_\parallel \mathcal R$ can also be checked similarly, by first modifying the candidate interval on one side in order to obtain a disconnecting interval $\llb c_\star, d \rrb$ having $(c_\star,l)$ a distance at least $m$ from $\partial_\parallel\mathcal R$, and then performing the modification on $d$ to obtain the desired disconnecting interval $\llb c_\star,d_\star\rrb$.

Finally, we claim that in all such situations, $\llb c_\star,d_\star \rrb$ satisfies
\begin{align*}
\frac{1}{4}W(\mathcal{R})\leq W(\mathcal{R}\cap(\llb c_\star,d_\star \rrb\times \llb 0,l \rrb))\leq\frac{3}{4}W(\mathcal{R})\,.
\end{align*}
This follows from the facts that $W(\mathcal{R})\geq100m$, $|c-c_\star| \le m$ and $|d-d_\star|\leq m$. 
\end{proof}

We will now use the disconnecting interval given by Lemma \ref{lem:splitting-algorithm}
to define two subsets $\mathcal{A}_{\interior}$ and $\mathcal{A}_{\ext}$ of
$\mathcal{R}$, and set $\mathcal{R}_{\interior}$ and $\mathcal{R}_{\ext}$
to be their enlargments by $m$. 
\begin{definition}
\label{def:subblocks}For a group of rectangles $\mathcal{R}$ compatible
with $\xi$, let $\llb c_\star,d_\star \rrb$ be the disconnecting interval given by Lemma
\ref{lem:splitting-algorithm}. Then define the \emph{interior and exterior cores} as 
$$\mathcal{A}_{\interior}=\mathcal{R}\cap(\llb c_\star,d_\star \rrb\times\llb 0,l \rrb) \qquad \mbox{and} \qquad \mathcal{A}_{\ext}=\mathcal{R}\cap(\llb c_\star,d_\star \rrb^{c}\times\llb 0,l \rrb)\,;$$ 
see Figure~\ref{figure:blocks}(a). Using the disconnecting interval $\llb c_\star,d_\star \rrb$ define the \emph{interior
and exterior blocks} as follows: 
\begin{enumerate}
\item Let $c_{-}=c_\star-m$ and $c_{+}=c_\star+m$. Let $d_{-}=d_\star-m$ and $d_{+}=d_\star+m$.
\item Define $\mathcal{R}_{\interior}=\mathcal{R}\cap(\llb c_{-},d_{+} \rrb\times\llb 0,l \rrb)$
and $\mathcal{R}_{\ext}=\mathcal{R}\cap((\llb 0,c_{+} \rrb\cup\llb d_{-},n \rrb)\times\llb 0,l \rrb)$; see Figure \ref{figure:blocks}(b).
\end{enumerate}
\end{definition}

\begin{figure}[t]
	\begin{subfigure}[b]{0.49\textwidth}
		
		\begin{center}
			\begin{tikzpicture}
				\draw[draw=black] (0,0) rectangle (3,1);
				\draw[draw=black] (4,0) rectangle (7,1);
				
				\draw[draw=black,densely dashed] (1.5,0)--(1.5,1);
				\draw[draw=black,densely dashed] (5.5,0)--(5.5,1);
				\draw[draw=gray!90,<->] (1.5,0.6) -- (3,0.6);
				\draw[draw=gray!90,<->] (4,0.6) -- (5.5,0.6);
				\draw[draw=gray!90,<->] (0,0.4) -- (1.5,0.4);
				\draw[draw=gray!90,<->] (5.5,0.4) -- (7,0.4);
				\draw [rounded corners]  (1.5,1)--(1.5,1.4)--(5.5,1.4)--(5.5,1);
				\draw [rounded corners]  (2.5,1)--(2.5,1.2)--(4.5,1.2)--(4.5,1);
				\draw [rounded corners]  (0.2,1)--(0.2,1.6)--(6.8,1.6)--(6.8,1);
				\node at (2.25,0.75) {{\tiny $\mathcal{A}_\interior$}};
				\node at (4.75,0.75) {{\tiny $\mathcal{A}_\interior$}};
				\node at (0.75,0.55) {{\tiny $\mathcal{A}_\ext$}};
				\node at (6.25,0.55) {{\tiny $\mathcal{A}_\ext$}};
			\end{tikzpicture}
			\caption{}
			
		\end{center}
	\end{subfigure}
	\begin{subfigure}[b]{0.49\textwidth}
		
		\begin{center}

			\begin{tikzpicture}
			\draw[draw=black] (0,0) rectangle (3,1);
			\draw[draw=black] (4,0) rectangle (7,1);
			
			\draw[draw=black,densely dashed] (1.25,0)--(1.25,1);
			\draw[draw=black,densely dashed] (1.75,0)--(1.75,1);
			
			\draw[draw=black,densely dashed] (5.25,0)--(5.25,1);
			\draw[draw=black,densely dashed] (5.75,0)--(5.75,1);
	
			\draw[draw=gray!90,<->] (1.25,0.6) -- (3,0.6);
			\draw[draw=gray!90,<->] (4,0.6) -- (5.75,0.6);
			\draw[draw=gray!90,<->] (0,0.2) -- (1.75,0.2);
			\draw[draw=gray!90,<->] (5.25,0.2) -- (7,0.2);
			\draw [rounded corners]  (1.5,1)--(1.5,1.4)--(5.5,1.4)--(5.5,1);
			\draw [rounded corners]  (2.5,1)--(2.5,1.2)--(4.5,1.2)--(4.5,1);
			\draw [rounded corners]  (0.2,1)--(0.2,1.6)--(6.8,1.6)--(6.8,1);
			\node at (2.125,0.75) {{\tiny $\mathcal{R}_\interior$}};
			\node at (4.875,0.75) {{\tiny $\mathcal{R}_\interior$}};
			\node at (0.875,0.35) {{\tiny $\mathcal{R}_\ext$}};
			\node at (6.125,0.35) {{\tiny $\mathcal{R}_\ext$}};
			\end{tikzpicture}
			\caption{}
			
		\end{center}
	\end{subfigure}
	\caption{{{\rm (a)} The cores $\mathcal{A}_\interior$ and $\mathcal{A}_\ext$} and {{\rm (b)} the blocks $\mathcal{R}_\interior$ and $\mathcal{R}_\ext$. The blocks $\mathcal R_{\interior}$ and $\mathcal R_{\ext}$ are the enlargements of $\mathcal A_{\interior}$ and $\mathcal A_\ext$ by exactly $m$, and are thus, themselves, groups of rectangles.}}     
	\label{figure:blocks}
\end{figure}

We will demonstrate that this choice of $\mathcal R_{\interior}$ and $\mathcal R_\ext$ has certain fundamental properties that will facilitate the recursive argument in the proof of Theorem~\ref{thm:main-thin-rect}.  
\begin{proposition}
\label{prop:subblock-properties}
If $\mathcal{R}$
is a group of rectangles compatible with $\xi$, and moreover, $W(\mathcal{R})\geq 100m$,
then the sets $\mathcal{R}_{\interior}$ and $\mathcal{R}_{\ext}$
are groups of rectangles satisfying the following properties: 
\begin{enumerate}
\item $\frac{1}{5}W(\mathcal{R})\leq W(\mathcal{R}_{\interior})\leq\frac{4}{5}W(\mathcal{R})$
and likewise $\frac{1}{5}W(\mathcal{R})\leq W(\mathcal{R}_{\ext})\leq\frac{4}{5}W(\mathcal{R})$; 
\item Both $\mathcal{R}_{\interior}$
and $\mathcal{R}_{\ext}$ are compatible with $\xi$.
\end{enumerate}
\end{proposition}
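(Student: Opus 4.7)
My plan is to verify each property directly from Definition \ref{def:subblocks}, using Lemma \ref{lem:splitting-algorithm} as a black box. Let $\llb c_\star, d_\star \rrb$ be the disconnecting interval it produces, and write $i_c \leq i_d$ for the indices of the rectangles $R_j = \llb a_j, b_j\rrb \times \llb 0,l\rrb$ of $\mathcal{R}$ containing $(c_\star,l)$ and $(d_\star,l)$, respectively. The property of the lemma I will repeatedly exploit is that both $(c_\star,l)$ and $(d_\star,l)$ lie at distance at least $m$ from $\partial_\parallel \mathcal{R}$, so that
\[
\min\{c_\star - a_{i_c},\; b_{i_c} - c_\star,\; d_\star - a_{i_d},\; b_{i_d} - d_\star\} \geq m.
\]

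For property 1, I observe that $\mathcal{R}_\interior$ is obtained from $\mathcal{A}_\interior$ by adjoining at most an $m$-wide strip on each side, and similarly for $\mathcal{R}_\ext$ versus $\mathcal{A}_\ext$, so for $\bullet \in \{\interior, \ext\}$ one has $W(\mathcal{A}_\bullet) \leq W(\mathcal{R}_\bullet) \leq W(\mathcal{A}_\bullet) + 2m$. Lemma \ref{lem:splitting-algorithm} gives $\tfrac14 W(\mathcal{R}) \leq W(\mathcal{A}_\interior) \leq \tfrac34 W(\mathcal{R})$, and $W(\mathcal{A}_\ext) = W(\mathcal{R}) - W(\mathcal{A}_\interior)$ satisfies the same sandwich. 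Combining this with $W(\mathcal{R}) \geq 100m$ (so that $2m \leq W(\mathcal{R})/50$) yields $\tfrac15 W(\mathcal{R}) \leq W(\mathcal{R}_\bullet) \leq \tfrac45 W(\mathcal{R})$ immediately.

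For property 2, I plan to first enumerate the constituent rectangles. In the case $i_c < i_d$, $\mathcal{R}_\interior$ decomposes as $\llb c_-, b_{i_c}\rrb \times \llb 0,l\rrb$, the intermediate rectangles $R_{i_c+1}, \ldots, R_{i_d-1}$, and $\llb a_{i_d}, d_+\rrb \times \llb 0,l\rrb$, while $\mathcal{R}_\ext$ decomposes as $R_1, \ldots, R_{i_c-1}$, $\llb a_{i_c}, c_+\rrb \times \llb 0,l\rrb$, $\llb d_-, b_{i_d}\rrb \times \llb 0,l\rrb$, $R_{i_d+1}, \ldots, R_{N(\mathcal{R})}$. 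In the case $i_c = i_d$, $\mathcal{R}_\interior$ collapses to the single rectangle $\llb c_-, d_+\rrb \times \llb 0,l\rrb$, and in $\mathcal{R}_\ext$ the rectangle $R_{i_c}$ is cleaved into the two pieces $\llb a_{i_c}, c_+\rrb \times \llb 0, l \rrb$ and $\llb d_-, b_{i_c}\rrb \times \llb 0, l \rrb$, which are well separated because $d_\star - c_\star \geq \tfrac14 W(\mathcal{R}) \geq 25m > 2m$. The displayed distance inequality above then forces each new ``boundary'' rectangle to have width $\geq 2m$, so $\mathcal{R}_\interior$ and $\mathcal{R}_\ext$ are valid groups of rectangles.

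The remaining task is to check the two compatibility conditions of Definition \ref{def:compatible-bc} for $\mathcal{R}_\interior$ and $\mathcal{R}_\ext$. For each consecutive pair of rectangles, the required disconnecting interval either coincides with one between two consecutive rectangles of $\mathcal{R}$ (this is the case for all pairs whose gap is inherited unchanged from $\mathcal{R}$, including ``mixed'' pairs whose relevant endpoints among $a_{i_c}, b_{i_c}, a_{i_d}, b_{i_d}$ are untouched by the construction), and is therefore disconnecting by the compatibility of $\mathcal{R}$ with $\xi$; or else it is precisely the interval $\llb c_+ - m, d_- + m\rrb = \llb c_\star, d_\star\rrb$, which appears as the unique new internal gap of $\mathcal{R}_\ext$ between the two truncated pieces and is disconnecting by construction. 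The second compatibility condition requires $\llb c_\star, d_\star\rrb$ to be disconnecting for $\mathcal{R}_\interior$ (its leftmost rectangle starts at $c_-$ and rightmost ends at $d_+$), which holds by construction; and $\llb a_1 + m, b_{N(\mathcal{R})} - m\rrb$ to be disconnecting for $\mathcal{R}_\ext$ (whose leftmost and rightmost rectangles still start at $a_1$ and end at $b_{N(\mathcal{R})}$), which is inherited from the compatibility of $\mathcal{R}$ itself. I do not foresee any real obstacle; the main bookkeeping will be to separately handle the $i_c = i_d$ case where the splitting happens within a single rectangle $R_{i_c}$, but the argument is structurally parallel.
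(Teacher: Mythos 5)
Your proof is correct and takes essentially the same route as the paper's: verify $\mathcal{R}_\interior$ and $\mathcal{R}_\ext$ are groups of rectangles by using that $(c_\star,l),(d_\star,l)$ lie at distance $\geq m$ from $\partial_\parallel\mathcal{R}$, deduce property~1 from $\frac14 W(\mathcal R)\leq W(\mathcal A_\bullet)\leq\frac34 W(\mathcal R)$ together with $W(\mathcal R)\geq 100m$, and deduce compatibility by noting that every required disconnecting interval either coincides with one already required for $\mathcal R$ or is exactly $\llb c_\star,d_\star\rrb$. Your explicit enumeration of the constituent rectangles (and the separate handling of $i_c=i_d$) is a little more detailed than the paper's indexing argument but is logically the same.
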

\begin{proof}
We show first that $\mathcal R_{\interior}$ and $\mathcal R_\ext$ are groups of rectangles (see Definition~\ref{def:group-of-rectangles}). 
By construction $\mathcal{R}_{\interior}$ and $\mathcal{R}_{\ext}$ are
unions of rectangular subsets of $\L_{n , l}$. Since $\mathcal{R}$
is a group of rectangles, every constituent
rectangular subset $R_i$ of $\mathcal R$ has $W(R_{i})\geq 2m$ by definition. By Lemma~\ref{lem:splitting-algorithm}, $(c_\star,l),(d_\star,l)$ were such that they are a distance at least $m$ from $\partial_\parallel \mathcal R$; as a consequence $\llb c_-, c_+\rrb \times \llb 0,l\rrb$ and $\llb d_-, d_+\rrb \times \llb 0,l\rrb$ are subsets of $\mathcal R$. Moreover, every constituent rectangular subset of $\mathcal R_{\interior}$ and $\mathcal R_{\ext}$ is either a constituent rectangle of $\mathcal R$, or contains one of $\llb c_-, c_+\rrb \times \llb 0,l\rrb$ and $\llb d_-, d_+\rrb \times \llb 0,l\rrb$, implying that every rectangular subset of $\mathcal R_{\interior}$ and $\mathcal R_\ext$ has width at least $2m$. 
Together, these verify that $\mathcal R_{\interior}$ and $\mathcal R_\ext$ are indeed groups of rectangles. 

The first property follows from the facts that, by Lemma~\ref{lem:splitting-algorithm}, $\frac{1}{4}W(\mathcal{R})\leq W(\mathcal{A}_{\interior})\leq\frac{3}{4}W(\mathcal{R})$
and $\frac{1}{4}W(\mathcal{R})\leq W(\mathcal{A}_{\ext})\leq\frac{3}{4}W(\mathcal{R})$,
while
$W(\mathcal{R})\geq100m$, $W(\mathcal{R}_{\interior})-W(\mathcal{A}_{\interior})=2m$ and $W(\mathcal{R}_{\ext})-W(\mathcal{A}_{\ext})=2m$.

To verify the second property, consider $\mathcal{R}_{\ext}$ first and label its rectangular subsets $R^{\ext}_1,...,R^\ext_{N(R_\ext)}$ (from left to right) with $R^\ext_j = \llb a^\ext_j, b^\ext_j\rrb$. Let $i,i+1$ be the indices of the two distinct rectangular subsets containing $\mathcal{R}_{\ext}\setminus\mathcal{A}_{\ext}$. As before, $R_1,..., R_{N(\mathcal R)}$ are the constituent rectangular subsets of $\mathcal R$. Then, for every $j\in \{1,..., N(\mathcal R_\ext)-1\}\setminus \{i\}$, there is a $k\in \{1,..,N(\mathcal R)-1\}$ such that 
$
b^\ext_j =  b_k
$
and
$ a^\ext_{j+1}= a_{k+1}$.
Hence, the compatibility of $\mathcal R$ with $\xi$ guarantees that the interval $\llb b_j^\ext - m, a_{j+1}^\ext+m\rrb$ is disconnecting for every $j\neq i$. 

To see that $\llb b_i^\ext - m, a_{i+1}^\ext+m \rrb$ is disconnecting, notice that by construction of $\mathcal R_\ext$, $\llb b_i^\ext - m, a_{i+1}^\ext+m \rrb = \llb c_\star,d_\star\rrb$.  Finally, since $(c_\star,l), (d_\star,l)$ were at distance at least $m$ from $\partial_\parallel \mathcal R$, the interval $\llb a^\ext_1 +m, b^\ext_{N(\mathcal R_\ext)} -m\rrb$ matches the interval $\llb a_1+m, b_{N(\mathcal R)} - m\rrb$, and thus by compatibility of $\mathcal R$ with $\xi$ implies the former is a disconnecting interval. Altogether these imply the compatibility of $\mathcal R_\ext$ with~$\xi$.

Similarly, label the constituent rectangles of $\mathcal R_\interior$ as $R_1^\interior, ..., R_{N(\mathcal R_\interior)}^\interior$ and notice that $1$ and $N(\mathcal R_\interior)$ are the indices of the two rectangles containing $\mathcal R_\interior \setminus \mathcal A_\interior$. Every interval of the form $\llb b^\interior_i -m, a^\interior_{i+1} +m\rrb$ corresponds (up to change of index) to such an interval for $\mathcal R$, so that by compatibility of $\mathcal R$ with respect to $\xi$, these are all disconnecting intervals. The interval $\llb a^\interior_1 + m, b^\interior_{N(\mathcal R_\interior)}- m\rrb$ is exactly the interval $\llb c_\star,d_\star\rrb$ given by Lemma~\ref{lem:splitting-algorithm}, and therefore this is disconnecting by construction. Together, these imply the compatibility of $\mathcal R_\interior$ with $\xi$. 
\end{proof}

\subsection{Block dynamics coupling time}
\label{subsec:block-dynamics}

Here we consider the block dynamics on $\mathcal{R}$ with blocks $\mathcal{R}_{\interior}$
and $\mathcal{R}_{\ext}$ as defined in Section~\ref{subsec:sub-blocks}; see also Figure~\ref{figure:blocks}(b). We begin by defining the block dynamics for a group of rectangles. 

\begin{definition}\label{def:block-dynamics-2}
Let $\mathcal R$ be a group of rectangles with boundary condition $\zeta$.
The block dynamics $\{X_t\}$ 
with blocks $\mathscr{B} = \{\mathcal B_1,\ldots,\mathcal B_k\}$ such that $\mathcal B_i \subset \mathcal R$ and $\bigcup_{i=1}^k  E(\mathcal B_i)=E(\mathcal R)$
 is the discrete-time Markov chain that at each $t$, selects $i$ uniformly at random from $\{1,...,k\}$ and updates the configuration on $E(\mathcal B_i)$ from the stationary distribution conditional on the configuration $X_t(E^c(\mathcal B_i))$. 
 \end{definition}

\noindent Fix $\mathscr{B} = \{\mathcal{R}_{\interior},\mathcal{R}_{\ext}\}$ and let $\gap(\mathcal{R^\zeta};\mathscr{B})$ be the spectral gap of
this block dynamics on $\mathcal R$ with boundary condition $\zeta=(\xi,\omega_{\mathcal{R}^c})$,
where $\xi$ is a realizable boundary condition on $\L_{n, l}$ and $\omega_{\mathcal{R}^c}$ is a configuration on $E^c(\mathcal{R})$.

\begin{lemma}
\label{lem:block-dynamics-gap}
Let $\xi$ be a realizable boundary condition 
on $\b\Lambda_{n,l}$ that is free on $\b_{\south}\Lambda_{n,l} \cup \b_{\east}\Lambda_{n,l} \cup \b_{\west}\Lambda_{n,l}$
and free on  vertices in $\b_{\north}\Lambda_{n,l}$ at distance at most $m$ from $\partial_{\east}\Lambda_{n,l}\cup\partial_{\west}\Lambda_{n,l}$.
For every $q>1$ and $p\neq p_c(q)$, there exists $K= K(p,q)\ge1$ such that for every group of rectangles $\mathcal R$ compatible with $\xi$, and every configuration $\omega_{\mathcal R^c}$, 
\begin{align*}
\gap(\mathcal R^{(\xi,\omega(\mathcal R ^c))};\mathscr{B}) & \geq K^{-1}\,.
\end{align*}
\end{lemma}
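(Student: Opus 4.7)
The plan is to bound the spectral gap from below by showing that the block dynamics on $\mathcal R$ with blocks $\mathscr B = \{\mathcal R_\interior, \mathcal R_\ext\}$ has coupling time $\Tcoup = O(1)$, uniformly in $\xi$ and $\omega_{\mathcal R^c}$. Since the block dynamics is reversible and $\gap^{-1}-1 \le \tmix \le \Tcoup$, a uniform constant upper bound on $\Tcoup$ yields the desired $\gap \ge K^{-1}$. By Remark~\ref{rem:fk-dynamics-duality} it suffices to work in the regime $p<p_c(q)$, since the duality map sends realizable boundary conditions of our type to the dual type and preserves compatibility of $\mathcal R$ with $\xi$.

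I would use the monotone (grand) coupling: at each step both chains select the same block and the resulting FK samples on that block are coupled so as to preserve $\subseteq$. By monotonicity in boundary conditions, it suffices to couple the extremal chains $X_t, Y_t$ with $X_0 \equiv 0$ and $Y_0 \equiv 1$ on $E(\mathcal R)$. The key step is to analyze what happens at the first time the block $\mathcal R_\interior$ is selected. After that update, $X$ and $Y$ are distributed as FK samples on $\mathcal R_\interior$ under two (ordered) boundary conditions inherited from the fixed configurations on $E(\mathcal R)\setminus E(\mathcal R_\interior)$ and from $\xi$. I claim that with probability $1-o(1)$ both samples possess a dual-open vertical crossing of each of the two overlap strips $\llb c_-, c_+\rrb\times\llb 0,l\rrb$ and $\llb d_-, d_+\rrb\times\llb 0,l\rrb$. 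By monotonicity it suffices to verify this for $Y$, and the complementary event is the existence of a horizontal primal open path across a strip of width $2m$; a union bound over the $l+1$ possible heights and the exponential decay of connectivities~\eqref{eq:EDC} bounds this by $(l+1)\, e^{-2cm} \le (l+1)\, l^{-2cC_\star}$, which is $o(1)$ once $C_\star > (2c)^{-1}$.

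On this good event, the compatibility of $\mathcal R$ with $\xi$ lets me argue that $\mathcal A_\interior$ is effectively isolated from $\mathcal A_\ext$ in both chains. The dual crossings block every primal path through the overlap, while the disconnecting interval $\llb c_\star,d_\star\rrb$ controls the boundary interaction at the top: in the free-type case there are no connections at all in $\xi$ between $\llb c_\star,d_\star\rrb\times\{l\}$ and its complement, whereas in the wired/free-wired-type case the only such connections pass through the pinch points $(c_\star,l),(d_\star,l)$, which can be placed on the $\mathcal R_\ext$ side by shifting the endpoints of the dual crossings outward by one lattice step. Consequently, the partition of $\partial \mathcal R_\ext$ induced by the configuration on $E^c(\mathcal R_\ext)$ together with $\xi$ coincides in $X$ and $Y$. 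When $\mathcal R_\ext$ is subsequently selected, the conditional laws of the block update are then identical for the two chains, and the grand coupling makes them agree on $\mathcal R_\ext$, hence on all of $\mathcal R$ since they already agree on $\mathcal R\setminus \mathcal R_\ext$.

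The main obstacle is making rigorous the assertion that the two induced partitions of $\partial \mathcal R_\ext$ coincide on the good event, especially in the wired and free-wired cases, where one must track carefully on which side of the dual crossings the distinguished vertices $(c_\star,l)$ and $(d_\star,l)$ sit and how this interacts with the wirings of $\xi$. Once this matching is established with $\Omega(1)$ probability under a two-step schedule (first $\mathcal R_\interior$, then $\mathcal R_\ext$), a standard argument applied to the block-dynamics selection rule gives $\Tcoup = O(1)$ and hence $\gap(\mathcal R^{(\xi,\omega_{\mathcal R^c})};\mathscr B) \ge K^{-1}$ for a constant $K = K(p,q)$.
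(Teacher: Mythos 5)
Your overall strategy---reduce the spectral gap bound to an $O(1)$ coupling-time bound for the two-block dynamics, work at $p<p_c(q)$ via duality, sample $\mathcal R_\interior$ first with the wired external reference sample dominating, and use dual paths in the overlap region plus the disconnecting-interval structure of $\llb c_\star,d_\star\rrb$ to isolate $\mathcal A_\interior$---matches the paper's approach. The genuine gap is at the central step: from the dual-crossing event you conclude that the partitions induced on $\partial\mathcal R_\ext$ by $X_1$ and $Y_1$ coincide, and you even write ``they already agree on $\mathcal R\setminus\mathcal R_\ext$,'' but nothing in the proposal establishes that the block samples $X_1,Y_1$ agree \emph{as configurations} on $E(\mathcal R)\setminus E(\mathcal R_\ext)$. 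The grand (monotone) coupling only gives $X_1\subseteq Y_1$; dual crossings in the overlap strips are a property of the configurations, not a mechanism that forces equality, and without equality on the interior the two induced boundary partitions on $\partial\mathcal R_\ext$ need not match.

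The paper closes exactly this gap with Lemma~\ref{lemma:main:coupling}: one samples $(\omega^{\theta_X},\omega^{\theta_Y},\omega^{\theta_1})$ monotonically, and then on the event $\omega^{\theta_1}\in\Gamma$ one \emph{resamples} all three configurations on the ``free region'' $F(\omega^{\theta_1})\supseteq\mathcal A_\interior$ from a single draw. This is a valid replacement step only because one can verify, using the disconnecting-interval structure (to control wirings through $\xi$ at the top) together with the fact that every edge leaving $F(\omega^{\theta_1})$ is closed in all three samples, that the boundary conditions $\eta_1,\eta_X,\eta_Y$ induced on $F(\omega^{\theta_1})$ coincide, so the domain Markov property makes the redistributed triple have the right marginals. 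This resampling step, and the verification of coinciding induced boundary conditions, is the heart of the lemma and is missing from the proposal. There is also a secondary issue: a plain north-south dual crossing of the strip is weaker than the paper's $\Gamma$, which additionally requires the dual path to reach the top-most edge of $\partial E(Q_\west)$ (the $\Gamma_\west^{\west\east}$ component). Without this, a boundary component of $\xi$ meeting $\partial_\north Q_\west$ to the east of your crossing can still connect into $\mathcal A_\interior$ and break the isolation, which is why the correct probability estimate is $\Omega(1)$ rather than $1-o(1)$ (though $\Omega(1)$ suffices).
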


\begin{proof}
	We consider the $p < p_c(q)$ case first.
	Let $\{X_{t}\},\{Y_{t}\}$ be two instances of the block dynamics on $\mathcal{R}$
	with boundary condition $\zeta = (\xi,\omega_{\mathcal{R}^{c}})$ started from initial configurations $X_0,Y_0$. 
	We design a coupling $\P$ for the steps of $\{X_{t}\}$ and $\{Y_{t}\}$ and bound its coupling time.
	This yields upper bounds for both the mixing time and the inverse spectral gap of the block dynamics; see Section \ref{sec:prelim} for a brief overview of the coupling method.
	For this, we will show that for any two initial configurations $X_0,Y_0$ 
	\begin{align}
	\label{eq:main:coupling}
	\mathbb{P}(X_{2}=Y_{2}) = \Omega(1) & \,.
	\end{align}
	Since this bound will be uniform over $X_{0},Y_{0}$, we can make independent
	attempts at coupling the two chains every two steps.
	Hence,
	there would exist 
	$T = O(1)$ such that 
	$$\max_{X_{0},Y_{0}}\mathbb{P}(X_{T}\neq Y_{T}) \leq  \frac 14\,,$$
	bounding the coupling time by $T=O(1)$ concluding the proof.
	
	First observe that with probability ${1}/{4}$ the first block to
	be updated is $\mathcal{R}_{\interior}$ and the second is $\mathcal{R}_{\ext}$.
	Suppose this is the case and let us consider the update on block $\mathcal{R}_{\interior}$.
	Observe that $X_1(\mathcal{R}_{\interior})$ 
	is distributed according to the random-cluster measure $\pi^{\theta_X}$ 
	on $\mathcal{R}_{\interior}$,
	where $\theta_X$ 
	is the boundary condition induced on $\b \mathcal{R}_{\interior}$
	by the boundary condition $\zeta$ on $\mathcal{R}$
	and the configuration of $X_0$ in $E(\mathcal{R})\setminus E(\mathcal{R}_{\interior})$. 
	Likewise, $Y_1(\mathcal{R}_{\interior})$ has law $\pi^{\theta_Y}$, with the boundary condition
	$\theta_Y$
	defined analogously but considering the configuration of $Y_0$ in $E(\mathcal{R})\setminus E(\mathcal{R}_{\interior})$ instead.
	Therefore, any coupling for the random-cluster measures $\pi^{\theta_X}$, $\pi^{\theta_Y}$
	yields a coupling for the first steps of $\{X_t\}$ and $\{Y_t\}$. 
	  
	Let $\theta_1$ be the boundary condition on $\b\mathcal{R}_{\interior}$ induced by $\zeta$ and the configuration that is all wired on $E(\mathcal{R})\setminus E(\mathcal{R}_{\interior})$;
	let $\pi^{\theta_1}$ be the corresponding random-cluster measure on $\mathcal{R}_{\interior}$.
	Let $Q_{\west}, Q_{\east} \subset \mathcal{R}_{\interior}$
	be the two rectangles of width $m$ that contain all the vertices in $\mathcal{R}_{\interior}\setminus\mathcal{A}_{\interior}$; i.e.,
	$Q_{\west} \cup \mathcal A_\interior \cup Q_{\east} = \mathcal R_\interior$, $Q_{\west} \cap \mathcal A_\interior = \emptyset$ and $Q_{\east} \cap \mathcal A_\interior = \emptyset$ (see Figure \ref{figure:coupling}(a)).
	Let $\b E(Q_{\west})$ be the set edges with one endpoint in $Q_{\west}$ and the other in $\mathcal A_\interior$, and similarly define~$\b E(Q_{\east})$.

	Let $\Gamma_{\west}$ be the set of configurations in $\mathcal{R}_\interior$ that have a \textit{dual-path} in $E(Q_{\west}) \cup \b E(Q_{\west})$ 
	connecting the top-most edge in $\b E(Q_{\west})$ to an edge in $\b_{\south}Q_{\west}$, and similarly define $\Gamma_{\east}$ as the set of configurations in $\mathcal{R}_\interior$ that have a dual-path in $E(Q_{\east}) \cup \b E(Q_{\east})$ 
	from the top-most edge in $\b E(Q_{\east})$ to an edge in $\b_{\south}Q_{\east}$.
	(A dual-path is an open path in the dual configuration.)
	Let $\Gamma=\Gamma_{\east}\cap\Gamma_{\west}$; see Figure \ref{figure:coupling}(b).
	The following lemma supplies the desired coupling.
			
	\begin{figure}[t]
		\begin{subfigure}[b]{0.32\textwidth}
			
			\begin{center}
				\begin{tikzpicture}
				\draw[draw=black] (0,0) rectangle (5,1.5);
				\draw[draw=black,densely dashed] (1,0)--(1,1.5);
				\draw[draw=black,densely dashed] (4,0)--(4,1.5);
				\draw[draw=gray!90,<->] (1,1) -- (4,1);
				\draw[draw=gray!90,<->] (4,0.5) -- (5,0.5);
				\draw[draw=gray!90,<->] (0,0.5) -- (1,0.5);
				\draw [rounded corners]  (1,1.5)--(1,2)--(4,2)--(4,1.5);
				\node at (2.5,1.15) {{\tiny $\mathcal{A}_\interior$}};
				\node at (0.5,0.65) {{\tiny $Q_\west$}};
				\node at (4.5,0.65) {{\tiny $Q_\east$}};
				\end{tikzpicture}
				\caption{}
				
			\end{center}
		\end{subfigure}
		\begin{subfigure}[b]{0.32\textwidth}
			
			\begin{center}
				\begin{tikzpicture}
				\draw[draw=black] (0,0) rectangle (5,1.5);
				\draw [rounded corners]  (1,1.5)--(1,2)--(4,2)--(4,1.5);
				\draw[color=black,densely dotted,decoration={snake, segment length=3mm, amplitude=0.5mm}] (0.5,0)decorate{--(0.7,0.5)--(0.5,1)--(0.8,1.2)--(0.95,1.5)};
				\draw[color=black,densely dotted,decoration={snake, segment length=3mm, amplitude=0.5mm}] (0.5,0)decorate{--(0.7,0.5)--(0.5,1)--(0.8,1.2)--(0.95,1.5)};
				\draw[color=black,densely dotted,decoration={snake, segment length=3mm, amplitude=0.5mm}] (4.2,0) decorate{--(4.5,0.6)--(4.5,1)--(4.05,1.5)};
				\draw[draw=black,densely dotted,decorate,decoration={snake, segment length=3mm, amplitude=0.5mm}] (4.2,0)--(4.5,0.6)--(4.5,1)--(4.05,1.5);
				\node at (2.5,0.9) {{\tiny $\mathcal{A}_\interior$}};
				\draw[draw=gray!90,<->] (1,0.75) -- (4,0.75);
				\draw[draw=black,densely dashed] (1,0)--(1,1.5);
				\draw[draw=black,densely dashed] (4,0)--(4,1.5);
				\end{tikzpicture}	
				
				\caption{}
			\end{center}
		\end{subfigure}
		\begin{subfigure}[b]{0.32\textwidth}
			
			\begin{center}
				\begin{tikzpicture}
				\draw[draw=black] (0,0) rectangle (5,1.5);

				\draw [rounded corners]  (1,1.5)--(1,2)--(4,2)--(4,1.5);
				
				\draw[draw=black,densely dotted,decorate,decoration={snake, segment length=3mm, amplitude=0.5mm}] (0.5,1)--(0.46,1.5);
				\draw[draw=black,densely dotted,decorate,decoration={snake, segment length=3mm, amplitude=0.5mm}] (0.5,1)--(0.05,1.5);
				\draw[color=black,densely dotted,decoration={snake, segment length=3mm, amplitude=0.5mm}] (0.5,0)decorate{--(0.7,0.5)--(0.5,1)--(0.8,1.2)--(0.95,1.5)};
				\draw[color=black,densely dotted,decoration={snake, segment length=3mm, amplitude=0.5mm}] (0.5,0)decorate{--(0.7,0.5)--(0.5,1)--(0.8,1.2)--(0.95,1.5)};
				
				\draw[draw=black,densely dotted,decorate,decoration={snake, segment length=2.8mm, amplitude=0.5mm}] (4.5,1)--(4.95,1.5);
				\draw[draw=black,densely dotted,decorate,decoration={snake, segment length=2.8mm, amplitude=0.5mm}] (4.5,1)--(4.5,1.5);
				\draw[color=black,densely dotted,decoration={snake, segment length=3mm, amplitude=0.5mm}] (4.2,0) decorate{--(4.5,0.6)--(4.5,1)--(4.05,1.5)};
				\draw[draw=black,densely dotted,decorate,decoration={snake, segment length=3mm, amplitude=0.5mm}] (4.2,0)--(4.5,0.6)--(4.5,1)--(4.05,1.5);
				\node at (2.5,0.9) {{\tiny $\mathcal{A}_\interior$}};
				\draw[draw=gray!90,<->] (1,0.75) -- (4,0.75);
				\draw[draw=black,densely dashed] (1,0)--(1,1.5);
				\draw[draw=black,densely dashed] (4,0)--(4,1.5);
				\end{tikzpicture}
				\caption{}
			\end{center}
		\end{subfigure}
		\caption{{{\rm (a)} The block $\mathcal{R}_{\interior}$ with its subsets $\mathcal{A}_{\interior}$, $Q_\west$ and $Q_\east$. } {{\rm (b)}  The block $\mathcal{R}_{\interior}$ with the dual-paths (dotted) of a configuration in $\Gamma$. }{{\rm (c)} The block $\mathcal{R}_{\interior}$ with the dual-paths (dotted) of a configuration in $\Gamma_{\west}^{\north\south} \cap \Gamma_{\west}^{\west\east} \cap \Gamma_{\east}^{\north\south} \cap \Gamma_{\east}^{\west\east} \subset \Gamma$.}}     
		\label{figure:coupling}
	\end{figure}

	\begin{lemma}
		\label{lemma:main:coupling}
		Let $q>1$ and $p<p_c(q)$. There exists a coupling $\P_1$ of the distributions
		$\pi^{\theta_X}$, $\pi^{\theta_Y}$, $\pi^{\theta_1}$
		such that if $(\omega^{\theta_X},\omega^{\theta_Y},\omega^{\theta_1})$ is sampled from $\P_1$, then all of the following hold:
		\begin{enumerate}
			\item $\P_1(\omega^{\theta_X},\omega^{\theta_Y},\omega^{\theta_1}) > 0$
			only if $\omega^{\theta_X} \le \omega^{\theta_1}$  and $\omega^{\theta_Y} \le \omega^{\theta_1}$;
			\item 	$\P_1(\omega^{\theta_X}(\mathcal{A}_\interior) = \omega^{\theta_Y}(\mathcal{A}_\interior) \mid \omega^{\theta_1} \in \Gamma) =1$;
			\item There exists a constant $\rho = \rho(p,q) > 0$ such that $\P_1(\omega^{\theta_1} \in \Gamma) \ge \rho$.
		\end{enumerate}

	\end{lemma}
	
	Hence, if we use the coupling $\P_1$ from Lemma \ref{lemma:main:coupling} to couple the first step of the chains, then $X_1$ and $Y_1$ will agree on $E(\mathcal{A}_{\interior})$
	with probability at least $\rho > 0$. If this occurs,
	then we can easily couple the update on $\mathcal{R}_{\ext}$ in the second step so 
	that $X_2 = Y_2$,
	since $X_1(E(\mathcal{A}_{\interior})) = Y_1(E(\mathcal{A}_{\interior}))$ implies $X_1(E(\mathcal{R})\setminus E(\mathcal{R}_{\ext})) = Y_1(E(\mathcal{R})\setminus E(\mathcal{R}_{\ext}))$,
	and thus the boundary conditions induced by the two instances of the chain on $\mathcal{R}_{\ext}$
	are identical. As a consequence, we obtain that for any $X_{0},Y_{0}$,
	\begin{align*}
	\mathbb{P}(X_{2}=Y_{2})\geq & \frac{\rho}{4}\,.
	\end{align*}
	which gives \eqref{eq:main:coupling} and thus concludes the proof for $p < p_c(q)$. 
	
	The case when $p>p_c(q)$ follows by an analogous dual argument.
	In this case,  the set $\Gamma$ has $o(1)$ probability and we therefore replace it by the set $\Gamma^*=\Gamma_\east^* \cap \Gamma_\west^*$, where $\Gamma_\west^*$ is the set of configurations in $\mathcal{R}_\interior$ that have an open path in $E(Q_\west)\cup \partial E(Q_\west)$ connecting the top-left corner of $\mathcal{A}_{\interior}$
	$(c_\star,l)$ to $\partial_\south Q_\west$; similarly,  $\Gamma_\east^*$ is the set of configurations in $\mathcal{R}_\interior$ that have an open path in $E(Q_\east) \cup \partial E(Q_\east)$ connecting the top-right corner $(d_\star,l)$ of $\mathcal{A}_{\interior}$ to $\partial_\south Q_\east$. 
	Let $\theta_0$ be the boundary condition on $\b\mathcal{R}_{\interior}$ induced by $\zeta$ and the all-free configuration on $E(\mathcal{R})\setminus E(\mathcal{R}_{\interior})$;
	let $\pi^{\theta_0}$ be the resulting random-cluster distribution on $\mathcal{R}_{\interior}$.
	The constant bound on the coupling time of the block dynamics would then follow as above from the following dual analogue of Lemma~\ref{lemma:main:coupling}. 
		
			\begin{lemma}
			\label{lemma:main:coupling:supercritical}
			Let $q>1$ and $p>p_c(q)$. There exists a coupling $\P_0$ of the distributions
			$\pi^{\theta_X}$, $\pi^{\theta_Y}$, $\pi^{\theta_0}$
			such that if $(\omega^{\theta_X},\omega^{\theta_Y},\omega^{\theta_0})$ is sampled from $\P_0$, then all of the following hold:
			\begin{enumerate}
				\item $\P_0(\omega^{\theta_X},\omega^{\theta_Y},\omega^{\theta_0}) > 0$
				only if $\omega^{\theta_X} \ge \omega^{\theta_0}$  and $\omega^{\theta_Y} \ge \omega^{\theta_0}$;
				\item 	$\P_0(\omega^{\theta_X}(\mathcal{A}_\interior) = \omega^{\theta_Y}(\mathcal{A}_\interior) \mid \omega^{\theta_0} \in \Gamma^*) =1$;
				\item There exists a constant $\rho = \rho(p,q) > 0$ such that $\P_0(\omega^{\theta_0} \in \Gamma^*) \ge \rho$.\qedhere
			\end{enumerate}
		\end{lemma}
\end{proof}

We proceed with the proof of the key Lemma~\ref{lemma:main:coupling}.
Parts 1 and 2 of the lemma follow from a fairly standard coupling technique; see, e.g., \cite{Alexander,BS}. We shall also use this approach later in the proof of Lemma~\ref{lemma:ssm:reduction}. Part 3 will be a consequence of the EDC property~\eqref{eq:EDC}; see proof of Claim~\ref{claim:coupling-proof}.

\begin{proof}[Proof of Lemma~\ref{lemma:main:coupling}]
	
	Let 
	$$
	L = \b_\west Q_\west \cup  \b_\north Q_\west \cup \b_\east Q_\east \cup \b_\north Q_\east;
	$$
	note that $L \cap \mathcal A_\interior = \emptyset$. For an FK configuration $\omega$ on $\mathcal{R}_{\interior}$ let
	\begin{equation}\label{def:F}
	F(\omega) := \mathcal{R}_{\interior} \setminus  \bigcup\nolimits_{v \in L}\, C(v,\omega)\,,\notag
	\end{equation}
	where $C(v,\omega)$ is the vertex set of the connected component of $v$ in $\omega$, ignoring the boundary connections.
	
	Clearly, 
	$\pi^{\theta_1}  \succeq \pi^{\theta_X}$ and $\pi^{\theta_1} \succeq \pi^{\theta_Y}$ and thus there exist monotone couplings $\P_X$ (resp., $\P_Y$) for $\pi^{\theta_X}$ and $\pi^{\theta_1}$ (resp., $\pi^{\theta_Y}$ and $\pi^{\theta_1}$). We use $\P_X$ and $\P_Y$ to construct the coupling $\P_1$ as follows: 
	\begin{enumerate}
		\item sample $(\omega^{\theta_X},\omega^{\theta_1})$ from $\P_X$ and sample $\omega^{\theta_Y}$ from $\P_Y(\,\cdot\mid \omega^{\theta_1}\,)$;
		\item if $\mathcal{A}_\interior \subseteq F(\omega^{\theta_1})$, 
		sample $\omega_\Delta$ from $\pi^{\eta_1}_{\Delta}$---where $\Delta=\Delta(\omega^{\theta_1})$ is the subgraph induced by
		$F(\omega^{\theta_1})$ and $\eta_1$ is the boundary condition on $\b F(\omega^{\theta_1})$ induced by $\theta_1$ and the configuration of $\omega^{\theta_1}$ on $E(\mathcal{R}_\interior) \setminus E(F(\omega^{\theta_1}))$---and 
		make $\omega^{\theta_1}(F(\omega^{\theta_1})) = \omega^{\theta_X}(F(\omega^{\theta_1})) = \omega^{\theta_Y}(F(\omega^{\theta_1})) = \omega_\Delta$. 
	\end{enumerate}
	Let $\P_1$ be the resulting distribution
	of $(\omega^{\theta_X},\omega^{\theta_Y},\omega^{\theta_1})$. After step (i), $\P_1$ has the desired marginals.
	Moreover,
	we claim that replacing the configuration in $F(\omega^{\theta_1})$ with $\omega_\Delta$ in step (ii) has no effect on the distribution, provided $\mathcal{A}_\interior \subseteq F(\omega^{\theta_1})$. 
	For this,
	we show that the three boundary conditions $\eta_1$, $\eta_X$, $\eta_Y$ induced on $\b F(\omega^{\theta_1})$
	by the configurations of $\omega^{\theta_X}$, $\omega^{\theta_Y}$, $\omega^{\theta_1}$ on $E(\mathcal{R}_\interior) \setminus E(F(\omega^{\theta_1}))$, respectively, and 
	the corresponding boundary conditions
	$\theta_X$, $\theta_Y$, $\theta_1$
	are all the same.

	First, observe that the boundary condition on $\b_{\south}\mathcal{A}_{\interior}$
	is, in all three cases, free by assumption.
	Also, 
	from the definition of $F(\omega^{\theta_1})$ it follows that 
	every edge of $E(\mathcal{R}_\interior)\setminus E(F(\omega^{\theta_1}))$ incident to $\b F(\omega^{\theta_1})$ is closed in $\omega^{\theta_1}$; hence the same holds for $\omega^{\theta_X}$ and $\omega^{\theta_Y}$.
	The remaining portion of $\partial F(\omega^{\theta_1})$ is precisely the set of vertices $(\partial \mathcal A_{\interior} \cap \partial \mathcal R) \setminus \partial_\south \mathcal R$. 
	To show that $\eta_1, \eta_X,\eta_Y$ also agree on $(\partial \mathcal A_{\interior} \cap \partial \mathcal R) \setminus \partial_\south \mathcal R$ we use the fact that top-left and top-right corners of $\mathcal A_{\interior}$ correspond to the endpoints of the disconnecting interval $\llb c_\star, d_\star \rrb$.
	Indeed, for the boundary conditions $\eta_1, \eta_X,\eta_Y$ to disagree on $(\partial \mathcal A_{\interior} \cap \partial \mathcal R) \setminus \partial_\south \mathcal R$ it must be the case that there are at least
	 two distinct connected components of $\zeta = (\xi, \omega_{\mathcal R^c})$ connecting $(\partial \mathcal A_{\interior} \cap \partial \mathcal R)\setminus \partial_\south \mathcal R$  and $\partial \mathcal R\setminus  \partial \mathcal A_{\interior}$. Since $(c_\star,l),(d_\star,l)\notin \partial_\parallel \mathcal R$ and $\xi$ is free on $\partial_\south \mathcal R$, this would require at least two distinct connected components of $\xi$ connecting vertices in $\llb c_\star, d_\star \rrb \times \{l\}$ to vertices in $\llb c_\star, d_\star\rrb^c \times \{l\}$. However, when $\llb c_\star, d_\star \rrb $ is a disconnecting interval of free-type, there are no such connected components, and when it is disconnecting of wired-type, the planarity of realizable boundary conditions implies that there can be at most one such connected component, which is exactly the component of $\xi$ containing both $(c_\star,l)$ and $(d_\star,l)$. 
	 			
	Altogether, these together imply that when $\mathcal{A}_\interior \subseteq F(\omega^{\theta_1})$, the three boundary conditions $\eta_1, \eta_X, \eta_Y$ induced on $\partial F(\omega^{\theta_1})$ are the same. 
	The {\it domain Markov property} of random-cluster measures (see, e.g., \cite{Grimmett}) then guarantees that replacing the configuration in $F(\omega^{\theta_1})$ with $\omega_\Delta$ had no effect on the distributions.

	Finally, note that if $\omega^{\theta_1} \in \Gamma$, then 
	the vertices in the boundary components of $L$, i.e., $\bigcup_{v \in L} C(v,\omega^{\theta_1})$,
	will be confined to $Q_\west \cup Q_\east$, in which case $\mathcal{A}_\interior \subseteq F(\omega^{\theta_1})$. This establishes parts 1 and 2, and part 3 follows directly from the following claim, which will conclude the proof.
	\end{proof}

	\begin{claim}
		\label{claim:coupling-proof}
		Let $q>1$ and $p<p_c(q)$. There exists $\rho = \rho(p,q) >0$ such that $\pi^{\theta_1}(\Gamma)\geq\rho$. 
	\end{claim}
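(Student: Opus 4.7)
The plan is to apply the FKG inequality to reduce the claim to separately lower bounding $\pi^{\theta_1}(\Gamma_\west)$ and $\pi^{\theta_1}(\Gamma_\east)$, and then to combine planar duality with the EDC property~\eqref{eq:EDC} to obtain these lower bounds. Both $\Gamma_\west$ and $\Gamma_\east$ are decreasing events in the primal configuration, since ``dual open'' means ``primal closed''. Because $q > 1$, the random-cluster measure satisfies the FKG inequality for decreasing events, so
$$
\pi^{\theta_1}(\Gamma) \;=\; \pi^{\theta_1}(\Gamma_\west \cap \Gamma_\east) \;\geq\; \pi^{\theta_1}(\Gamma_\west)\,\pi^{\theta_1}(\Gamma_\east).
$$
By the reflection symmetry of the construction of $\mathcal{R}_\interior$, $Q_\west$ and $Q_\east$, it then suffices to establish a uniform lower bound on $\pi^{\theta_1}(\Gamma_\west)$.

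Next, I would use planar duality inside the region $E(Q_\west) \cup \partial E(Q_\west)$ to translate $\Gamma_\west^c$ into a primal connectivity event. Concretely, the absence of the required dual-path, from the top-most edge in $\partial E(Q_\west)$ to an edge in $\partial_\south Q_\west$, is equivalent (via the standard planar duality for paths in a topological disc) to the existence of a primal open path in $E(Q_\west) \cup \partial E(Q_\west)$ joining the ``complementary'' pair of boundary arcs of this region, namely $\partial_\west Q_\west$ together with the part of $\partial_\north Q_\west$ west of the top-most edge of $\partial E(Q_\west)$, and the $\mathcal{A}_\interior$-side endpoints of the $\partial E(Q_\west)$ edges. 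The key quantitative consequence is that any such primal path must span the horizontal width of $Q_\west$, and therefore has graph length at least $m - 1 = C_\star \log l - 1$.

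Granted this duality, the probability bound follows from EDC. Applying~\eqref{eq:EDC} to each pair of candidate endpoints and union-bounding over the $O(l^2)$ endpoint choices along the relevant arcs,
$$
\pi^{\theta_1}(\Gamma_\west^c) \;\leq\; O(l^2)\cdot e^{-c(m-1)} \;=\; O\bigl(l^{\,2 - cC_\star}\bigr).
$$
Choosing $C_\star > c^{-1}$ sufficiently large (say $C_\star > 3/c$) makes this bound at most $1/2$ for all $l$ above some fixed threshold $l_0$. For $l \leq l_0$, the region $Q_\west$ has only $O(1)$ edges, so uniformly over $\theta_1$ the configuration in which every edge of $E(Q_\west) \cup \partial E(Q_\west)$ is primal closed has probability bounded below by some constant depending only on $p,q,l_0$; this configuration trivially implies $\Gamma_\west$. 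Combining the two regimes yields $\pi^{\theta_1}(\Gamma_\west) \geq \rho_1 > 0$ for a constant $\rho_1 = \rho_1(p,q)$, and the claim follows with $\rho := \rho_1^2$.

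The main obstacle I expect is making the planar duality step precise: the region $E(Q_\west) \cup \partial E(Q_\west)$ is not a clean rectangle, due to the ``whiskers'' given by $\partial E(Q_\west)$ extending eastward into $\mathcal{A}_\interior$. One must carefully identify the boundary arcs of this region so that $\Gamma_\west^c$ really corresponds to a primal crossing between two of them, and verify that any such primal crossing must traverse horizontal distance $\Omega(m)$. Once this combinatorial identification is in place, the EDC bound is a standard union bound and the rest of the argument is routine.
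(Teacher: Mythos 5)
Your FKG reduction to bounding $\pi^{\theta_1}(\Gamma_\west)$ is fine, but the key duality/EDC step is incorrect. You claim that any primal open path in $E(Q_\west)\cup\partial E(Q_\west)$ that blocks the required dual path must span the horizontal width $m$ of $Q_\west$. This is false. The two complementary boundary arcs you identify --- $\partial_\west Q_\west\cup\partial_\north Q_\west$ on one side and the $\mathcal A_\interior$-side endpoints of $\partial E(Q_\west)$ on the other --- come within $O(1)$ graph distance of one another near the top-east corner of $Q_\west$: the rightmost vertex of $\partial_\north Q_\west$ just west of the top-most whisker is within distance $2$ or $3$ of the $\mathcal A_\interior$-side endpoint of the second-from-top edge of $\partial E(Q_\west)$. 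A primal open path of that short length already separates the dual of the top-most whisker from $\partial_\south Q_\west$, so $\Gamma_\west^c$ can occur without any long primal crossing, and the union bound $\pi^{\theta_1}(\Gamma_\west^c)\leq O(l^2)e^{-c(m-1)}$ does not follow from \eqref{eq:EDC}: the pairs of endpoints near that corner contribute $\Omega(1)$ each.

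This is exactly why the paper does not run a duality/EDC argument on $\Gamma_\west$ directly. It instead introduces the sub-events $\Gamma_\west^{\north\south}$ (a dual crossing from $\partial_\north Q_\west$ to $\partial_\south Q_\west$) and $\Gamma_\west^{\west\east}$ (a local dual path ``sealing'' the top of $Q_\west$ from the top-left corner over to the top-most whisker), with $\Gamma_\west^{\north\south}\cap\Gamma_\west^{\west\east}\subset\Gamma_\west$. Your duality-plus-EDC computation is genuinely valid for $\Gamma_\west^{\north\south}$: its complement is a west--east primal crossing of width $\Omega(m)$, and a union bound over $O(l)$ heights gives $o(1)$ once $C_\star>c^{-1}$. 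But the short-path obstruction near the top-east corner is the content of $\Gamma_\west^{\west\east}$, which must be bounded by a separate local argument: EDC rules out connections of length $\geq D$ for some constant $D$, and the finitely many candidate short connections are handled by paying a constant cost $e^{-O(D)}$ to force the relevant edges closed, not by a union of EDC bounds. Your proof is missing this corner seal, and without it the lower bound on $\pi^{\theta_1}(\Gamma_\west)$ does not follow.
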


	\begin{proof}
		
		Let	$\Gamma_{\west}^{\north\south}$ (resp., $\Gamma_{\east}^{\north\south}$) be the set of configurations on $\mathcal R_\interior$
		such that there is dual-path from $\b_\north Q_\west$ to $\b_\south Q_\west$ (resp., from $\b_\north Q_\east$ to $\b_\south Q_\east$) in $E(Q_\west) \cup \b E(Q_\west)$ (resp., $E(Q_\east) \cup \b E(Q_\east)$).
		Also, let $\Gamma_{\west}^{\west\east}$ (resp., $\Gamma_{\east}^{\west\east}$) be the set of configurations on $\mathcal R_\interior$
		such that there is dual-path in $E(Q_\west) \cup \b E(Q_\west)$ (resp., in $E(Q_\east) \cup \b E(Q_\east)$) between the left-most edge in $\b_\north Q_\west$  (resp., the right-most edge in $\b_\north Q_\east$)
		and the top-most edge of $\b E(Q_{\west})$ (resp., $\b E(Q_{\east})$); see Figure \ref{figure:coupling}(c).
		Note that $\Gamma_{\west}^{\north\south} \cap \Gamma_{\west}^{\west\east} \cap \Gamma_{\east}^{\north\south} \cap \Gamma_{\east}^{\west\east} \subset \Gamma$.
		
		The width of $Q_\west$ is $m = C_{\star} \log l$.
		Hence, the EDC property~\eqref{eq:EDC} implies that when $C_{\star}$ is large enough there exists a constant $\rho_0 = \rho_0(p,q)> 0$
		such that
		$\pi^{\theta_1}(\Gamma_{\west}^{\north\south})\geq\rho_0$ and similarly for $\Gamma_{\east}^{\north\south}$.
		The EDC property~\eqref{eq:EDC} also implies that $\pi^{\theta_1}(\Gamma_{\west}^{\west\east})\geq\rho_1$ and $\pi^{\theta_1}(\Gamma_{\east}^{\west\east})\geq\rho_1$, for a suitable $\rho_1 = \rho_1(p,q)> 0$. We justify this as follows. By the EDC property~\eqref{eq:EDC}, there exists some constant $D$ such that with probability $\Omega(1)$, no pair of vertices whose distance is at least $D$, one of which is in $\b_\north Q_\west \setminus \b_\west Q_\west$ and the other in $\b _\east Q_\west \cup \partial_\south Q_\west \cup \partial_\east Q_\west$, will be connected in $Q_\west$. At the same time, with probability $\Omega(1)$, we can force $O(D)$ edges bordering those pairs of vertices that are distance less than $D$ to be closed so that no pair of vertices 
		that are closer than $D$ are connected in $Q_\west$ either. The analogous reasoning holds for $\Gamma_{\east}^{\west\east}$ and $Q_\east$.
		Since each of the events $\Gamma_{\west}^{\north\south}, \Gamma_{\west}^{\west\east}, \Gamma_{\east}^{\north\south}, \Gamma_{\east}^{\west\east}$ are
		decreasing events, by the FKG inequality (see, e.g., \cite{Grimmett}) we get
		$$
		\pi^{\theta_1}(\Gamma) \ge \pi^{\theta_1}(\Gamma_{\west}^{\north\south} \cap \Gamma_{\west}^{\west\east} \cap \Gamma_{\east}^{\north\south} \cap \Gamma_{\east}^{\west\east})
		\geq 
		\pi^{\theta_1}(\Gamma_{\west}^{\north\south})
		\pi^{\theta_1}(\Gamma_{\west}^{\west\east})
		\pi^{\theta_1}(\Gamma_{\east}^{\north\south})
		\pi^{\theta_1}(\Gamma_{\east}^{\west\east}) 
		\ge \rho_0^2\rho_1^2,
		$$ 
		and thus we can take $\rho=\rho_0^2\rho_1^2$ to have the desired estimate. 
	\end{proof}

	\begin{proof}[Proof of Lemma~\ref{lemma:main:coupling:supercritical}]
	The proof of Lemma~\ref{lemma:main:coupling} carries to Lemma~\ref{lemma:main:coupling:supercritical} with certain natural modifications we describe next. As before, 
	we let
	$
	L = \b_\west Q_\west \cup  \b_\north Q_\west \cup \b_\east Q_\east \cup \b_\north Q_\east
	$
	and let $(L^*,E(L^*))$ be the dual-graph induced by the set of dual-edges intersecting $E(L)$ in $(\mathbb Z^2)^*$; its vertex set consists of exactly
	$2E(L)-1$ vertices, and we refer to those outside of $\mathcal{R}_{\interior}$ as $\partial_\ext L^*$. 
	Similarly, let $(\mathcal{R}_{\interior}^*,E(\mathcal{R}_{\interior}^*))$ and $(\mathcal{A}_{\interior}^*,E(\mathcal{A}_{\interior}^*))$ be the dual-graphs induced by the dual-edges intersecting $E(\mathcal{R}_{\interior})$ and $E(\mathcal{A}_{\interior})$ in $(\mathbb Z^2)^*$ respectively.
	
	For an FK configuration $\omega$ on $E(\mathcal{R}_{\interior})$ we define a dual version of 
	$F(\omega)$ as 
	\begin{equation}\label{def:F:supercritical}
	F^*(\omega) := \mathcal{R}_{\interior}^* \setminus  \bigcup\nolimits_{v^* \in \partial_\ext L^*}\, C^*(v^*,\omega)\,,\notag
	\end{equation}
	where $C^*(v^*,\omega)$ 
	is the dual-vertex set of the connected component of $v^*$ in the dual-configuration $\omega^*$  (ignoring the boundary connections).
	
	Using monotone couplings for $\pi^{\theta_0}$ and $\pi^{\theta_X}$, and for $\pi^{\theta_0}$ and $\pi^{\theta_Y}$, we can define $\mathbb{P}_0$ analogously to $\mathbb{P}_1$ with the configuration on $E(F^*(\omega^{\theta_0}))$ being resampled   
	whenever $\mathcal{A}_\interior^* \subseteq F^*(\omega^{\theta_0})$.
	Observe that updating the dual configuration 
	on $E(F^*(\omega^{\theta_0}))$ 
	is equivalent to updating the primal edges intersecting $E(F^*(\omega^{\theta_0}))$.	
	The fact that resampling the configuration in $E(F^*(\omega^{\theta_0}))$
	 has no effect on the distribution when $\mathcal{A}_\interior^* \subseteq F^*(\omega^{\theta_0})$ 
	follows in similar fashion to the proof of Lemma~\ref{lemma:main:coupling}. 
Indeed, from the definition of $F^*(\omega^{\theta_0})$,
when $A_\interior^* \subseteq F^*(\omega^{\theta_0})$ there is a primal connection between $(c_\star,l)$ and $\partial_\south Q_\west$ in $E(Q_\west)\cup \partial E(Q_\west)$ together with a primal connection between $(d_\star, l)$ and $\partial_\south Q_\east$ in $E(Q_\east)\cup \partial E(Q_\east)$, so that $A_\interior^* \subseteq F^*(\omega^{\theta_0})$ implies the event $\Gamma^*$. Together with the fact that $\llb c_\star,d_\star \rrb$ is a disconnecting interval
	and the assumptions that $\xi$ is free on $\b_{\south}\mathcal{A}_{\interior}$
	and $(c_\star,l),(d_\star,l)\notin \partial_\parallel \mathcal R$, this
	ensures that the three induced boundary conditions on $E(F^*(\omega^{\theta_0}))$ coincide.
	
	Finally, part 3 of the lemma
	follows by an analogous argument to that in Claim~\ref{claim:coupling-proof}, only replacing the EDC property by the matching exponential decay of dual-connectivities when $p>p_c(q)$.
\end{proof}

\subsection{Proof of Theorem~\ref{thm:main-thin-rect}}\label{subsec:complete-proof}

In this section, we put together the results from Sections \ref{subsec:groups-of-rectangles}--\ref{subsec:block-dynamics} to prove Theorem~\ref{thm:main-thin-rect}. We first remind the reader of the following standard inequality concerning the spectral gaps of the FK and block dynamics.

\begin{theorem}[{\cite[Proposition~3.4]{Martinelli-notes}}]\label{thm:block-dynamics}
	Consider the FK-dynamics on a group of rectangles $\mathcal R$ with boundary condition $\zeta$.
	Let $\gap(\mathcal R^\zeta)$ and $\gap({\mathcal R}^\zeta;\mathscr B)$, respectively, be the spectral gaps of the FK-dynamics on $\mathcal R$ and of the block dynamics with blocks $\mathscr B = \{\mathcal B_1,\ldots,\mathcal B_k\}$ such that $\mathcal B_i \subset \mathcal R$ and $\bigcup_{i=1}^k  E(\mathcal B_i)=E(\mathcal R)$. For every $p,q$ there exists $\gamma = \gamma(p,q) \in (0,1)$ such that 
	\begin{equation*}
	\gap(\mathcal R^\zeta) \geq \gamma \cdot (\max_{e\in E(\mathcal R)} \#\{i : E(\mathcal B_i) \ni e\})^{-1}\cdot \gap (\mathcal R^\zeta;\mathscr{B}) \cdot \min_{\substack{i=1,\dots,k \\ \eta \in \Omega(\mathcal{B}_i^c)}} \gap(\mathcal B_i^{(\zeta,\eta)}) \,,
	\end{equation*}
	where $\Omega(\mathcal{B}_i^c)$ denotes the set of FK configurations on $E(\mathcal R) \setminus E(\mathcal B_i)$.
\end{theorem}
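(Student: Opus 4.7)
The approach is the standard two-level Dirichlet form comparison argument. By the variational characterization of the spectral gap, it suffices to prove the Poincar\'e inequality $\Var_{\pi^\zeta_{\mathcal R}}(f) \leq C \cdot \mathcal E_{\mathrm{FK}}(f,f)$ for every $f\colon \Omega(\mathcal R)\to \R$, where $\mathcal E_{\mathrm{FK}}$ is the Dirichlet form of the single-site FK-dynamics on $\mathcal R$ under boundary condition $\zeta$, and $C$ is the reciprocal of the claimed lower bound on $\gap(\mathcal R^\zeta)$. The plan is to bound $\Var_{\pi^\zeta_{\mathcal R}}(f)$ in two stages: first by the block Dirichlet form (via the block dynamics gap), and then by the full FK Dirichlet form (via the single-site gaps within each block).

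For the first stage, applying the spectral gap of the block dynamics gives
\begin{equation*}
\Var_{\pi^\zeta_{\mathcal R}}(f) \leq \gap(\mathcal R^\zeta;\mathscr B)^{-1}\cdot\mathcal E_{\mathrm{block}}(f,f), \quad \mathcal E_{\mathrm{block}}(f,f) = \frac{1}{k}\sum_{i=1}^k \E_{\pi^\zeta_{\mathcal R}}\!\left[\Var_{\pi^{(\zeta,\eta)}_{\mathcal B_i}}(f)\right],
\end{equation*}
using that each block update resamples from $\pi^\zeta_{\mathcal R}(\cdot\mid E^c(\mathcal B_i) = \eta)$, which by the domain Markov property equals $\pi^{(\zeta,\eta)}_{\mathcal B_i}$. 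For the second stage, fix $i$ and an outer configuration $\eta$, and apply the spectral gap of the FK-dynamics on $\mathcal B_i$ with boundary condition $(\zeta,\eta)$ to the conditional variance, obtaining $\Var_{\pi^{(\zeta,\eta)}_{\mathcal B_i}}(f) \leq \gap(\mathcal B_i^{(\zeta,\eta)})^{-1} \cdot \mathcal E^{\mathcal B_i,(\zeta,\eta)}_{\mathrm{FK}}(f,f)$, where the right-hand side is the single-site Dirichlet form restricted to edges in $E(\mathcal B_i)$. Taking expectations over $\eta\sim \pi^\zeta_{\mathcal R}$ and summing over $i$, each edge $e\in E(\mathcal R)$ appears in at most $M := \max_{e}\#\{i: E(\mathcal B_i)\ni e\}$ of the restricted forms, which produces the overlap factor $M^{-1}$ in the final bound.

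The only delicate step is relating, for each $e\in E(\mathcal B_i)$, the local contribution to $\mathcal E^{\mathcal B_i,(\zeta,\eta)}_{\mathrm{FK}}$ to the corresponding contribution in $\mathcal E_{\mathrm{FK}}(f,f)$ on all of $\mathcal R$: because the FK-dynamics acceptance probability at an edge takes one of only two values, $p$ or $p/(q(1-p)+p)$, depending on its cut-edge status, these local rates differ by at most a multiplicative factor depending solely on $p$ and $q$, and this is the source of the constant $\gamma(p,q)\in (0,1)$. Chaining the two variance bounds with this local comparison yields the theorem. I expect no conceptual obstacle beyond careful bookkeeping of the boundary-condition dependence through the conditional measures, as this is precisely the classical block-dynamics comparison originally developed by Martinelli.
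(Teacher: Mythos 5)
Your overall structure is exactly the intended argument: the paper itself does not prove this (it cites Martinelli's Proposition~3.4 and says the proof carries over mutatis mutandis), and the two-level variance decomposition you describe---first the Poincar\'e inequality for the block dynamics, then the within-block Poincar\'e inequalities, then the overlap bookkeeping giving the factor $M^{-1}$---is precisely the standard proof of that proposition.

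The one place you go astray is in your account of the constant $\gamma$. You attribute it to a discrepancy between the local heat-bath acceptance probabilities inside a block $\mathcal B_i$ under boundary $(\zeta,\eta)$ and those on all of $\mathcal R$ under $\zeta$. But by the domain Markov property of the random-cluster measure there is no such discrepancy: for a fixed configuration outside $e$, the conditional law of $e$ under $\pi_{\mathcal B_i}^{(\zeta,\eta)}$ agrees exactly with the conditional law of $e$ under $\pi_{\mathcal R}^{\zeta}$ restricted to $\{E^c(\mathcal B_i)=\eta\}$; the single-edge resampling operators $P_e$ are literally the same. Consequently the local terms $\mathrm{E}\big[\mathrm{Var}_e(f)\big]$ that appear after averaging over $\eta$ coincide, and no ``rate comparison'' factor is introduced at that stage. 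What actually produces a multiplicative loss beyond $M^{-1}$ is the mismatch of the uniform-sampling normalizations: the block Dirichlet form carries $1/k$, the within-block single-edge form carries $1/|E(\mathcal B_i)|$, and the target form on $\mathcal R$ carries $1/|E(\mathcal R)|$. Tracking this through gives a factor of order $k\min_i |E(\mathcal B_i)|/|E(\mathcal R)|$, which is the quantity the paper absorbs into $\gamma$. Strictly speaking this is a property of the block decomposition rather than of $(p,q)$ alone; in the paper's application ($k=2$, $M=2$, blocks of comparable width by Proposition~\ref{prop:subblock-properties}) it is a harmless $\Omega(1)$ constant, which is why the statement is written as it is. So your argument is correct up to this mis-attribution, and would be complete once you replace the acceptance-probability heuristic with the normalization bookkeeping.
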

\noindent
The proposition in~\cite{Martinelli-notes} is written in the spin system setting, but the proof follows mutatis mutandis for the random-cluster model and its proof is thus omitted. 
Also, we note that this theorem holds in more generality for arbitrary graphs with arbitrary boundary conditions, but for clarity we choose to state it here for groups of rectangles.

The final ingredient is
the following spectral gap bound for the base case in our recursive proof. 
\begin{lemma}\label{lem:base-case}
	Consider a group of rectangles $\mathcal{R}_{0} \subset \L_{n,l}$
	with $W(\mathcal{R}_{0})\leq100m$. For every $q>1$ and $p\neq p_c(q)$, there exists $\kappa = \kappa(p,q)>0$
	such that for every boundary condition
	$\zeta$ on $\mathcal{R}_{0}$, 
	\begin{align*}
	\gap(\mathcal{R}_{0}^\zeta) & \geq \frac{1}{ l (\log l)^{2}\cdot q^{\kappa l}}\,.
	\end{align*}
\end{lemma}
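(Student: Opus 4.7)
The plan is to reduce, via a boundary-condition comparison at the level of spectral gaps, to the case of free boundary conditions on $\mathcal{R}_0$, and then to exploit that with free boundary the chain decouples across the constituent rectangles of $\mathcal{R}_0$, each of which is thin enough for a Blanca--Sinclair-type optimal mixing bound to apply.

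First I would observe that $|\partial \mathcal{R}_0| = O(l)$: since $W(\mathcal{R}_0) \le 100m$ and every constituent rectangle $R_i = \llb a_i, b_i \rrb \times \llb 0, l \rrb$ has width $w_i \ge 2m$ by Definition~\ref{def:group-of-rectangles}, there are at most $50$ such rectangles, each contributing perimeter $O(w_i + l)$; summing gives $|\partial \mathcal{R}_0| = O(W(\mathcal{R}_0) + l) = O(l)$, and in particular $D(0,\zeta) \le |\partial \mathcal{R}_0| = O(l)$. I would then carry out a standard comparison of Dirichlet forms starting from Lemma~\ref{lemma:simple-rc-bound} with $\rho = \zeta$ and $\rho' = 0$: for each pair $\omega \sim \omega'$ differing at a single edge, $\pi^\zeta(\omega) P^\zeta(\omega, \omega') \ge q^{-O(D(0,\zeta))} \pi^0(\omega) P^0(\omega, \omega')$, while the usual second-moment trick with mean taken under $\pi^0$ gives $\mathrm{Var}_{\pi^\zeta}(f) \le q^{O(D(0,\zeta))} \mathrm{Var}_{\pi^0}(f)$. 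Plugging both into the variational characterization of the spectral gap then yields $\gap(\mathcal{R}_0^\zeta) \ge q^{-O(l)} \gap(\mathcal{R}_0^0)$.

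Next I would bound $\gap(\mathcal{R}_0^0)$. Because distinct rectangles $R_i$ share no edges in $E(\mathcal{R}_0)$, with free boundary the measure factorizes as $\pi^0_{\mathcal{R}_0} = \prod_i \pi^0_{R_i}$, and the FK-dynamics on $\mathcal{R}_0$ is a coordinate-choice mixture of heat-baths on each $R_i$, selecting $R_i$ with probability $|E(R_i)|/|E(\mathcal{R}_0)| \ge w_i/W(\mathcal{R}_0) = \Omega(1)$. By the standard product-chain identity, $\gap(\mathcal{R}_0^0) \ge \Omega(1)\cdot \min_i \gap(R_i^0)$. Each $R_i$ has height $l$ and width $w_i = O(\log l)$, so $|V(R_i)| = O(l \log l)$; since $p \ne p_c(q)$, the EDC estimate~\eqref{eq:EDC} supplies the strong-spatial-mixing hypothesis used in~\cite{BS}, whose proof of optimal $O(|V|\log|V|)$ mixing for the free (or wired) boundary FK-dynamics on boxes carries over to rectangles without essential change, giving $\gap(R_i^0) \ge 1/O(l(\log l)^2)$. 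Combined with the previous paragraph this yields $\gap(\mathcal{R}_0^\zeta) \ge q^{-O(l)}/O(l(\log l)^2)$, and hence the claim with $\kappa = \kappa(p,q)$ sufficiently large.

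The main obstacle is the final ingredient: verifying that the Blanca--Sinclair optimal-mixing result, proved in~\cite{BS} for square boxes, extends to rectangles of varying aspect ratios. Their argument is local and proceeds by a recursive block-dynamics scheme driven by SSM; SSM for free or wired boundary at $p \ne p_c$ follows from~\eqref{eq:EDC} on rectangles in exactly the same way as on squares, so the extension is essentially bookkeeping. In the $p > p_c$ regime one may alternatively compare $\zeta$ to the wired boundary and invoke the dual statement on the dual rectangle; the resulting bound is identical.
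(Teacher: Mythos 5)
Your proposal is correct and follows essentially the same route as the paper: compare the gap under an arbitrary boundary condition $\zeta$ to the free boundary at cost $q^{O(|\partial\mathcal R_0|)} = q^{O(l)}$, then invoke the Blanca--Sinclair optimal mixing bound for free boundaries. Your explicit product-chain factorization of $\gap(\mathcal R_0^0)$ over the disjoint constituent rectangles $R_i$ makes precise a step the paper leaves implicit when it applies~\cite{BS} directly to the group of rectangles $\mathcal R_0$.
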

\begin{proof}
	Note that $|\partial\mathcal{R}_{0}| = O(m+l)$. Hence,
	we can first modify the boundary conditions to be all free on all
	of $\partial\mathcal{R}_{0}$, incurring a cost of a $q^{O(l)}$ factor
	in the spectral gap by Lemma~\ref{lem:comparison-tmix}; recall that $m=O(\log l)$. Then we
	can use the fast mixing result of~\cite{BS}, for instance, to bound
	the mixing time on $\mathcal{R}_{0}$ with free boundary condition
	by $O(l (\log l)^{2})$. This translates into a lower bound for the spectral gap and the result follows.
\end{proof}

\begin{proof}[Proof of Theorem~\ref{thm:main-thin-rect}]
	Fix $q> 1$, $p\neq p_c(q)$ and $\L_{n,l}$ with a realizable boundary condition $\xi'$ that is \emph{free} on $\b_\east \L_{n,l}\cup \b_\south \L_{n,l}\cup \b_\west \L_{n,l}$. By Lemma~\ref{lem:comparison-tmix}, we may modify $\xi'$ to a boundary condition $\xi$ that is also free on all vertices a distance at most $m= C_\star \log l$ from $\b_\east \L_{n,l} \cup \b_\west \L_{n,l}$  at a cost of an  exponential in $m$ factor in the mixing time of the FK-dynamics. Let $\xi$ be the resulting realizable boundary condition. 
	
	We wish to prove, by induction, that for every $100m\leq s \leq n$, every group of rectangles $\mathcal R_s \subset \L_{n,l}$ that is compatible with $\xi$ and has $W(\mathcal R_s) = s$ satisfies
	\begin{align}\label{eq:induction-block-dynamics}
	\gap\left(\mathcal{R}_s^{(\xi,\omega_{\mathcal R_{s}^c})}\right) \geq \frac{1}{l (\log l)^2 q^{\kappa l} \cdot b^{\log s}}
	\end{align}
	for some $b = b(p,q)>0$ to be chosen, uniformly over all configurations $\omega_{\mathcal R_{s}^c}$ on $E^c(\mathcal{R}_{s})$. Eq.~\eqref{eq:induction-block-dynamics} concludes the proof since $\L_{n,l}$ is a group of rectangles with $W(\L_{n,l})=n$ and is compatible with~$\xi$. 
	
	The base case of this induction was shown in Lemma~\ref{lem:base-case}. Now suppose inductively that this holds for all $1\leq k \leq s-1$ for some $s\leq n$; we show that it also holds for $s$. Fix any $\mathcal R_s$ that is compatible with $\xi$, and any configuration $\omega_{\mathcal R_{s}^c}$. 
	Then, if we let $\mathcal R_{\interior}=\mathcal R_{\interior}(\mathcal R_s)$ and $\mathcal R_{\ext}= \mathcal R_{\ext}(\mathcal R_s)$ be the blocks given by Definition~\ref{def:subblocks} and $\mathscr B_s$ the block-dynamics with respect to these blocks, by Theorem~\ref{thm:block-dynamics}
	\begin{align*}
	\gap\left(\mathcal{R}_s^{(\xi,\omega_{\mathcal R_{s}^c})} \right) &\geq \frac{\gamma}{2} \cdot \gap \left(\mathcal{R}_s^{(\xi,\omega_{\mathcal R_{s}^c})} ;\mathscr B_s\right) \cdot \min_{i\in\{\interior,\ext\}} \min_{\omega_{\mathcal R_{i}^c}} ~\gap\left(\mathcal{R}_i^{(\xi,\omega_{\mathcal{R}_i^c})}\right) \\
	&\geq \frac{\gamma}{2K} \cdot \min_{i\in\{\interior,\ext\}} \min_{\omega_{\mathcal R_{i}^c}} \gap\left(\mathcal{R}_i^{(\xi,\omega_{\mathcal{R}_i^c})}\right),
	\end{align*}
	where the second inequality follows from Lemma~\ref{lem:block-dynamics-gap}.
	By Proposition~\ref{prop:subblock-properties},  $\max\{W(\mathcal R_\interior), W(\mathcal R_\ext)\} \leq \frac 45 s$, and we can apply the inductive hypothesis to bound the second term on the right-hand side above. Combined with Lemma~\ref{lem:base-case}, we see that the choice of $b= (2\gamma^{-1} K)^{\frac 1{\log (5/4)}}$ ensures that~\eqref{eq:induction-block-dynamics} holds also for $\mathcal R_s$. (Note that $2\gamma^{-1} K \ge 1$.)
	
	This establishes the result for the case when the boundary condition is free on $\b_\east \L_{n,l}\cup \b_\south \L_{n,l}\cup \b_\west \L_{n,l}$.
	As noted earlier (see Remark~\ref{rem:fk-dynamics-duality}), this implies by duality the same bound for the class of realizable boundary conditions $\xi$ that are wired on $\b_\east \L_{n,l}\cup \b_\south \L_{n,l}\cup \b_\west \L_{n,l}$ for all $p\neq p_c(q)$. 
	\end{proof}

\section{Polynomial mixing time for realizable boundary conditions}
\label{section:planar}

In this section we prove Theorem~\ref{thm:planar-mixing:intro}. 
This theorem is proved for $p < p_c(q)$ using the technology introduced in Section \ref{sec:general}; namely,
we construct a collection of subsets $\B$ for which we can establish LM and MSM; see Definitions~\ref{def:msm}--\ref{def:lm}.
To establish LM we crucially use Theorem~\ref{thm:main-thin-rect}. 
The results for $p > p_c(q)$ 
follow from the self-duality of the model and of realizable boundary conditions, as explained in  Section~\ref{subsection:prelim:dynamics}. 

For general realizable boundary conditions, proving LM for a collection of subsets $\B$ for which MSM holds 
is the main challenge. This is because,
for MSM to hold for a collection $\B$
for all realizable boundary conditions,
a subset in $\B$ needs to contain $\Omega(n)$ edges.
In particular, some element of $\B$ must include most (or all) edges near $\b\L_n$,
as otherwise it is straightforward to construct examples of realizable boundary conditions for which MSM
does not hold. 
Thus, a trivial (exponential in the perimeter) upper bound for the mixing time on those subsets with $\Omega(n)$ edges would be unhelpful and we use Theorem~\ref{thm:main-thin-rect}. 

\begin{figure}[t]
	\begin{subfigure}[b]{0.32\textwidth}
		
		\begin{center}
			\begin{tikzpicture}
			\draw[draw=black] (0,0) rectangle (3.5,3.5);
			\draw[draw=black,dotted] (0,0) rectangle (1,1);
			\draw[draw=black,dotted] (0,2.5) rectangle (1,3.5);
			\draw[draw=black,dotted] (2.5,2.5) rectangle (3.5,3.5);
			\draw[draw=black,dotted] (2.5,0) rectangle (3.5,1);
			
			\node[] (ne) at (0.5,0.5) {\tiny{$C_{\south \west}$}};	
			\node[] (ne) at (3,0.5) {\tiny{$C_{\south \east}$}};	
			\node[] (ne) at (0.5,3) {\tiny{$C_{\north \west}$}};	
			\node[] (ne) at (3,3) {\tiny{$C_{\north \east}$}};
			
			\draw[draw=gray!90,<->] 
			(0,1.1) -- (1,1.1);
			
			\draw[draw=gray!90,<->] 
			(1.1,0) -- (1.1,1);
			
			\node at (0.5,1.25) {{\tiny $5r$}};
			\node at (1.25,0.5) {{\tiny $5r$}};
			
			\end{tikzpicture}
			\caption{}
			
		\end{center}
	\end{subfigure}
	\begin{subfigure}[b]{0.32\textwidth}
		
		\begin{center}
			\begin{tikzpicture}
			\draw[draw=black] (0,0) rectangle (3.5,3.5);
			\draw[draw=black,dotted] (0,0.5) rectangle (0.4,3);		
			\draw[draw=black,dotted] (0.5,3.1) rectangle (3,3.5);
			\draw[draw=black,dotted] (3.1,0.5) rectangle (3.5,3);		
			\draw[draw=black,dotted] (0.5,0) rectangle (3,0.4);
			
			\node[] (ne) at (1.75,0.2) {\tiny{$R_\south$}};	
			\node[] (ne) at (1.75,3.3) {\tiny{$R_\north$}};	
			\node[] (ne) at (0.2,1.75) {\tiny{$R_\west$}};	
			\node[] (ne) at (3.3,1.75) {\tiny{$R_\east$}};
			
			\draw[draw=gray!90,<->] 
			(0.5,0.5) -- (0.5,3);
			
			\draw[draw=gray!90,<->] 
			(0,3.1) -- (0.4,3.1);
			
			\node at (0.9,1.75) {{\tiny $n\!-\!6r$}};
			\node at (0.2,3.25) {{\tiny $3r$}};
			\end{tikzpicture}
			\caption{}
			
		\end{center}
	\end{subfigure}
	\begin{subfigure}[b]{0.32\textwidth}
		
		\begin{center}
			\begin{tikzpicture}
			\draw[draw=black] (0,0) rectangle (3.5,3.5);
			\draw[draw=black,dotted] (0,0) rectangle (0.75,0.75);
			\draw[draw=black,dotted] (1,1) rectangle (2.25,2.25);
			
			\draw[color=black,fill=black] (1.525,1.625) circle (.03);
			\draw[color=black,fill=black] (1.725,1.625) circle (.03);
			\draw[color=black] (1.525,1.625) -- (1.725,1.625);
			\put(6.3,5.5){{\tiny $e$}} 
			\put(44.5,41.5){{\tiny $e$}} 
			
			\draw[color=black,fill=black] (0.15,0.15) circle (.03);
			\draw[color=black,fill=black] (0.15,0.35) circle (.03);
			\draw[color=black] (0.15,0.15) -- (0.15,0.35);

			\draw[draw=gray!90,<->] 
			(1,0.85) -- (2.25,0.85);
			
			\put(38,18){{\tiny $2r\!+\!1$}}			
			
			\end{tikzpicture}
			\caption{}
			
		\end{center}
	\end{subfigure}
	
	\caption{{{\rm (a)} The subsets $C_{\north \east}$, $C_{\north\west}$, $C_{\south\east}$, and $C_{\south\west}$.} {{\rm (b)} The subsets $R_\north$, $R_\east$, $R_\west$ and $R_\south$.} {{\rm (c)} $B(e,r)$ for two edges $e$ of $\L_n$.} }     
	\label{figure:planar:blocks}
\end{figure}

We now define the collection of blocks for which we can establish both LM and MSM.
Let $r \in \N$ and
let $C_{\north \east},C_{\north\west},C_{\south\east},C_{\south\west} \subset \L_n$ be the four square boxes of side length $5r$ 
with a corner that coincides with a corner of $\L_n$; 
see Figure \ref{figure:planar:blocks}(a). 
Let $R_\north \subset \L_n$ be the $(n-6r)\times 2r $ rectangle
at distance $3r$ from both $\b_\west\L_n$ and $\b_\east\L_n$ whose top boundary is contained in $\b_\north \L_n$ and let $R_\east,R_\west,R_\south$ be defined analogously; see Figure \ref{figure:planar:blocks}(b). Let $R = R_\north \cup R_\east \cup R_\west \cup R_\south$.
Now, for $e \in E(\L_n)$, let $B(e,r) \subset \L_n$ be the set of vertices in the minimal square box around $e$ such that $d(\{e\},\L_n\setminus B(e,r)) \ge r$. 
Note that if $d(\{e\},\b \L_n) > r$, then $B(e,r)$ is just a square box of side length $2r+1$ centered at $e$; otherwise $B(e,r)$ intersects $\b \L_n$; see Figure \ref{figure:planar:blocks}(c).
Finally, let
\begin{align}\label{eq:planar-B-def:sketch}
\mathcal{B}_r = \{C_{\north\east},C_{\north\west},C_{\south\east},C_{\south\west},R\} \cup \{B(e,r): e \in E(\L_n), d(\{e\},\b\L_n) > r\}.
\end{align}

We claim that LM holds for $\B_r$ with $r = \Theta(\log n)$
and $T = O(n^C)$ for some constant $C > 0$.

\begin{theorem}\label{thm:planar:lm}
	Let $q \ge 1$, $p < p_c(q)$ and
	$r = c_0 \log n$ with $c_0 > 0$ independent of $n$.
	There exists  a constant $C > 0$ such that LM holds for every realizable boundary condition $\xi$ and $\mathcal{B}_r$ with $T = O(n^C)$.
\end{theorem}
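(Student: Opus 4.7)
The plan is to verify that for every $B\in\mathcal B_r$ and every realizable $\xi$, both $\tmix(B^{(0,\xi)})$ and $\tmix(B^{(1,\xi)})$ are at most $O(n^C)$, by separately analyzing the three geometric classes of subsets composing $\mathcal B_r$. For the interior boxes $B(e,r)$ with $d(\{e\},\partial\L_n)>r$, the block is a $(2r{+}1)\times(2r{+}1)$ square disjoint from $\partial\L_n$, so the induced boundary conditions $(0,\xi)$ and $(1,\xi)$ simplify to the pure all-free and all-wired partitions on $\partial B(e,r)$, with $\xi$ playing no role; since $r=O(\log n)$, the fast mixing result of~\cite{BS} immediately gives $\tmix=O((\log n)^{O(1)})$.

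For the corner boxes $C_{\cdot\cdot}$, each is a $5r\times 5r$ square with two sides lying on $\partial\L_n$ (carrying $\xi$'s partition) and two interior sides (made all-free under $(0,\xi)$ or all-wired under $(1,\xi)$). I would use the boundary modification inequality Lemma~\ref{lem:comparison-tmix} to replace the $\xi$-partition on the two $\partial\L_n$-adjacent sides by the uniform all-free (respectively all-wired) partition; since those two sides contain only $O(r)=O(\log n)$ vertices in total, the cost is $q^{O(\log n)}=\poly(n)$. The modified boundary condition is uniformly free (or wired) on all four sides of the $5r\times 5r$ box, so~\cite{BS} gives mixing time $O((\log n)^{O(1)})$ after the modification, for a total bound of $\poly(n)$.

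The annular set $R$ is the main and most delicate case. Since $|E(R)|=\Theta(n\log n)$, canonical-path bounds are useless and a full boundary modification on $\partial R$ would cost $q^{\Theta(n)}$. Instead, I would bound $\tmix(R^{(0,\xi)}),\tmix(R^{(1,\xi)})$ via the block-dynamics inequality Theorem~\ref{thm:block-dynamics} applied with the four disjoint blocks $\{R_\north,R_\east,R_\south,R_\west\}$. The per-block ingredient is the crucial observation that the three interior sides of each $R_i$ lie in $\L_n\setminus R$ and share no vertices with any other $R_j$, so the configuration on $E^c(R)$ forces those sides to be free under $(0,\xi)$ or wired under $(1,\xi)$, while the fourth side (on $\partial\L_n$) carries a realizable partition derived from $\xi$ and the configurations on the other $R_j$'s. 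Thus the induced boundary condition on $R_i$ is realizable and is free or wired on three sides, and Theorem~\ref{thm:main-thin-rect} applied with $l=2r=O(\log n)$ yields $\tmix(R_i^\eta)\le \exp(O(r+\log n))=\poly(n)$ uniformly over the induced boundary $\eta$.

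The remaining and hardest step is to lower-bound the spectral gap of the four-block dynamics on $R$ by $1/\poly(n)$. Since the four rectangles are geometrically separated by the corner regions and the configuration outside $R$ is fixed, the only dependencies between different blocks come through the ambient boundary condition --- via $\xi$-components on $\partial\L_n$ under $(0,\xi)$, or via the single wired component forced by the all-wired $E^c(R)$ under $(1,\xi)$. I would establish the gap bound by a coupling argument over $O(1)$ sweeps of the four blocks, using monotone couplings for each block update and exploiting that the cross-block dependence is mediated solely by a boundary partition derived from the realizable $\xi$. Combining this gap estimate with the per-block mixing bound via Theorem~\ref{thm:block-dynamics} then yields $\tmix(R^{(0,\xi)}),\tmix(R^{(1,\xi)})=\poly(n)$, completing the proof. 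The main obstacle will be making this spectral-gap argument precise: tracking how the induced boundary on each $R_i$ varies as the configurations on the other three blocks change, and showing that two coupled copies of the chain coalesce on $R$ with at least inverse-polynomial probability per sweep.
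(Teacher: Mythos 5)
Your handling of the small boxes (the interior $B(e,r)$'s and the corners $C_{\north\east},C_{\north\west},C_{\south\east},C_{\south\west}$) matches the paper: boundary modification via Lemma~\ref{lem:comparison-tmix} at cost $n^{O(1)}$ and then the $O((\log n)^2\log\log n)$ bound from~\cite{BS}. Your per-block observation for $R$ --- that the boundary condition induced on each $R_i$ by $(0,\xi)$ or $(1,\xi)$ and the configuration on the other three rectangles is realizable and free (resp.\ wired) on three sides of $R_i$, so Theorem~\ref{thm:main-thin-rect} gives a $\mathrm{poly}(n)$ per-block mixing bound --- is also correct.

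However, the step you flag as ``the main obstacle'' is a genuine gap, and the route through a four-block dynamics with \emph{disjoint} blocks $\{R_\north,R_\east,R_\south,R_\west\}$ does not close it. The coupling mechanism behind Lemma~\ref{lem:block-dynamics-gap} in Section~4 is the presence of an overlap region of width $\Theta(\log l)$ between the two blocks, inside which an $\Omega(1)$-probability dual disconnecting structure severs the influence of one block on the other; with disjoint blocks there is no overlap and hence no such buffer. The cross-block dependence is mediated by $\xi$, which may contain a single component wiring $\Theta(n)$ vertices of $\b_\north R_\north$ to $\b_\east R_\east$ (and cyclically around all four sides). For two coupled copies of your block chain to coalesce after a sweep, the connectivity of each such $\xi$-endpoint to the ambient wired (or free) component would have to match across the two copies simultaneously --- an agreement over a region of $\Theta(n\log n)$ edges, not the $O((\log n)^2)$-edge overlap regions that make the $\Omega(1)$ probability in Lemma~\ref{lemma:main:coupling} possible. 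This is not salvaged by letting the four blocks overlap slightly either, because a fixed decomposition of $R$ cannot be adapted to $\xi$ the way Lemma~\ref{lem:splitting-algorithm} adapts its disconnecting intervals. The paper sidesteps the problem with a different idea: it rotates and concatenates $R_\west^{\pi/2},R_\north,R_\east^{-\pi/2},R_\south^{-\pi}$ into one $[4(n-6r)-3]\times 2r$ rectangle $Q$ (identifying the overlapping columns and externally wiring them), proves in Claim~\ref{claim:thin-rectangles-bridges} that the induced boundary condition $\xi'$ on $Q$ is realizable and free/wired on $\b_\east Q\cup\b_\south Q\cup\b_\west Q$, and applies Theorem~\ref{thm:main-thin-rect} directly to $Q$. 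In this way all of the $\xi$-mediated long-range dependence among the $R_i$'s is delegated to the $\xi$-adapted recursive block scheme of Section~4, which a fixed four-block decomposition of $R$ cannot replicate.
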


\noindent
The subsets $B(e,r)$ in $\mathcal B_r$
and  the corner boxes
$C_{\north \east}$, $C_{\north\west}$, $C_{\south\east}$ and $C_{\south\west}$
are small enough that crude bounds for their mixing times are sufficient.
As mentioned earlier, 
the main challenge for proving local mixing for $\B_r$ is to derive a mixing time bound for $R = R_\north \cup R_\east \cup R_\west \cup R_\south$ as it intersects the boundary of $\L_n$ and contains $\Omega(n)$ vertices. 
To establish such a bound
we rely on Theorem \ref{thm:main-thin-rect}. In particular, we relate the mixing time of the FK-dynamics on $R$ to that of the FK-dynamics on a single thin rectangle by concatenating the four rectangles constituting $R$, one after another, such that the union of their outer boundaries make up the northern boundary of the new rectangle.

The final ingredient of the proof is establishing MSM for the collection 
$ {\mathcal B}_r$.
We show that MSM holds for $ {\mathcal B_r}$ with $r=\Theta(\log n)$ for all realizable boundary conditions $\xi$
where the vertices in $\b\L_n$ at distance $5r$ from the corners of $\L_n$ are free in $\xi$.
This is sufficient since any realizable boundary condition 
can be turned into a realizable boundary condition with this property by simply removing all connections in $\xi$ involving  vertices near the corners of $\L_n$; this modification can change the mixing time of the FK-dynamics by a factor of at most $\exp(O(r))$; see Lemma \ref{lem:comparison-tmix}. Theorem \ref{thm:planar-mixing:intro} then follows 
from Theorems~\ref{thm:planar:lm},~\ref{thm:planar-msm:main} and~\ref{thm:general-mixing}. 

\begin{theorem}\label{thm:planar-msm:main}
	Let $q \ge 1$, $p < p_c(q)$ and
	$r = c_0 \log n$ with $c_0 > 0$ independent of $n$.
	Let $\xi$ be a realizable boundary condition
	with the property that every vertex $v \in \b \L_n$ at distance at most $5r$ from a corner of $\L_n$ is free in $\xi$.
	Then, for all sufficiently large $c_0 > 0$, MSM holds for $\xi$ and  $\mathcal{B}_r$ with $\delta < 1/(12|E(\L_n)|)$.
\end{theorem}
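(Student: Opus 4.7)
I would assign to each edge $e \in E(\L_n)$ a block $B_j \in \mathcal{B}_r$ tailored to the position of $e$: if $d(e,\b\L_n) > r$ take $B_j = B(e,r)$; if $e$ lies within distance $4r$ of some corner of $\L_n$ take $B_j$ to be the corresponding corner box; otherwise (so $e$ is near $\b\L_n$ but far from the corners) take $B_j = R$. Using the side length $5r$ of the corner boxes, the width $2r$ of the arms of $R$, the $3r$-gap separating the arms of $R$ from the corners, and the hypothesis that $\b\L_n$ is free in $\xi$ within distance $5r$ of each corner, a direct geometric case analysis shows that $e$ lies at graph distance at least $r$ from the vertex set $D \subseteq \partial B_j$ on which the two extremal boundary conditions $\tau_1,\tau_0$---induced on $B_j$ by the events $\{E^c(B_j) = 1\}$ and $\{E^c(B_j) = 0\}$ respectively, together with $\xi$---differ as partitions.

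Given the choice of $B_j$, the plan is to bound the MSM difference by adapting the coupling technique from Lemma~\ref{lemma:main:coupling}. Coupling $\omega^1 \sim \pi^{\tau_1}_{B_j}$ and $\omega^0 \sim \pi^{\tau_0}_{B_j}$ monotonically, define the uncovered region
\[
F(\omega^1) := B_j \setminus \bigcup\nolimits_{v \in D} C(v,\omega^1),
\]
where $C(v,\omega^1)$ is the primal connected component of $v$ in $\omega^1$ (ignoring $\xi$-wirings). Because $F(\omega^1)$ is disjoint from $D$ and every $\omega^1$-edge crossing $\partial F(\omega^1)$ is closed, the boundary conditions induced on $\partial F(\omega^1)$ by $\tau_1$ and $\tau_0$ paired with $\omega^1$ on $B_j \setminus F(\omega^1)$ coincide, exactly as in the proof of Lemma~\ref{lemma:main:coupling}. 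Resampling under these common boundary conditions yields a coupling in which $\omega^1$ and $\omega^0$ agree on $F(\omega^1)$, and consequently
\[
\bigl|\pi^{\tau_1}_{B_j}(e=1) - \pi^{\tau_0}_{B_j}(e=1)\bigr| \;\le\; \Pr[e \notin F(\omega^1)] \;\le\; \sum_{v \in D} \pi^{\tau_1}_{B_j}\bigl(e \longleftrightarrow v\bigr).
\]

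The final step is to bound each connection probability via exponential decay. Using monotonicity in boundary conditions, $\pi^{\tau_1}_{B_j}$ is stochastically dominated by an FK measure with fully wired boundary, to which EDC~\eqref{eq:EDC} applies uniformly to yield $\pi^{\tau_1}_{B_j}(e \leftrightarrow v) \le e^{-c d(e,v)}$ for a constant $c = c(p,q) > 0$. Since $d(e,v) \ge r = c_0 \log n$ for every $v \in D$ and $|D| = O(n)$, the sum is $O(n \cdot n^{-c c_0})$; choosing $c_0 > 3/c$ makes this smaller than $1/(12|E(\L_n)|)$, establishing MSM with the required $\delta$.

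The main obstacle is the geometric verification in the case $B_j = R$, because $D$ then encompasses both the inner boundary of $R$ and the portion of $\partial R$ wired through the corner boxes under the all-open exterior. The free-corners hypothesis is crucial here: without it, $\xi$ could wire a corner vertex with a distant vertex on $\b\L_n$, thereby extending the effective disagreement set $D$ into a neighbourhood of $e$ and invalidating the bound $d(e,D) \ge r$.
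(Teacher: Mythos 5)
Your high-level strategy (block assignment based on position, coupling argument \`a la Lemma~\ref{lemma:main:coupling}, then EDC) is the right one and mirrors the paper, which routes the reduction through Lemma~\ref{lemma:ssm:reduction}. However, there is a substantive gap in your coupling step, and a secondary issue with your block assignment.

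\textbf{The main gap: your uncovered region $F(\omega^1)$ must account for $\xi$-wirings.} You define $C(v,\omega^1)$ as the \emph{primal} component of $v$, explicitly ``ignoring $\xi$-wirings,'' and then assert that the boundary conditions induced on $\partial F(\omega^1)$ by $\tau_1$ and by $\tau_0$ coincide, citing Lemma~\ref{lemma:main:coupling}. This assertion fails. Consider a vertex $a\in F(\omega^1)\cap\partial\L_n$ with $a\sim_\xi c$ for some $c\in(B_j\setminus F(\omega^1))\cap\partial\L_n$ with $c\notin D$, where $c$ is $\omega^1$-connected to $D$. Because $a$ is only reached from $D$ via the $\xi$-wire through $c$, your definition keeps $a\in F(\omega^1)$. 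But in the conditional resampling, $a$'s induced wiring under $\tau_1$ (where $D$ is all wired) passes through $c$'s $\omega^1$-component to $D$ and then to everything $D$ touches, while under $\tau_0$ (with $\omega^0\le\omega^1$ and $D$ all free) $a$'s class is strictly smaller. So $\eta_1\neq\eta_0$ on $\partial F(\omega^1)$ and the resampling does not preserve both marginals. The paper's Lemma~\ref{lemma:ssm:reduction} avoids this exactly by defining $\Gamma^{(1,\xi)}(\omega)$ with components $C(v,\omega)$ that \emph{do} use the connectivities of $(1,\xi)$; the argument in Lemma~\ref{lemma:main:coupling} that you lean on likewise needs its disconnecting-interval structure precisely to control which $\xi$-wirings can cross $\partial F$. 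Once you fix $F$ to include $\xi$-wirings, the resulting bound is $\pi^{(1,\xi)}\bigl(e\stackrel{\xi}{\longleftrightarrow}\partial B_j\setminus\partial\L_n\bigr)$ rather than $\sum_{v\in D}\pi(e\leftrightarrow v)$, and the naive pointwise EDC step no longer applies directly; you must then argue (as the paper does) that a $\xi$-augmented path from $e$ to $\partial B_j\setminus\partial\L_n$ still forces an open primal path of length at least $r$ somewhere inside $B_j$, which is where the free-corners hypothesis actually earns its keep.

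\textbf{A secondary issue with the block assignment.} Your rule ``$e$ within $d_1$-distance $4r$ of a corner $\Rightarrow B_j = C_i$'' combined with ``otherwise near $\partial\L_n$ $\Rightarrow B_j = R$'' does not guarantee $d(e,\partial R\setminus\partial\L_n)\ge r$. Take $e$ near $(3r+1,\,n-r)$: it is at $d_1$-distance $4r+1 > 4r$ from the corner, so it is assigned to $R$, yet it is at distance $1$ from the arm-end $\{3r\}\times\llb n-2r,n\rrb\subset\partial R\setminus\partial\L_n$. The paper handles this by using the condition $d(\{e\},\partial R\setminus\partial\L_n)\ge r$ directly as the criterion for assigning $e$ to $R$, sending the remaining near-boundary edges to the corner boxes (which do contain them, since $C_i$ has side $5r$).
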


We are now ready to prove Theorem~\ref{thm:planar-mixing:intro} using the above.

\begin{proof}[Proof of Theorem~\ref{thm:planar-mixing:intro}]
	As mentioned earlier, by duality of the dynamics and self-duality of the class of realizable boundary conditions, it suffices to prove the theorem for $p<p_c(q)$. 
	Let $\mathcal{P}$ be the set of realizable boundary conditions of $\L_n = (\L_n,E(\L_n))$.
	For $\eta \in \mathcal{P}$, let 
	$(\eta_1,\eta_2,\dots,\eta_k)$ denote the partition of  $\b\L_n$ corresponding to $\eta$, and let $\eta(\ell)$ be the boundary condition 
	obtained as follows:
	for each $v \in \partial \L_n$, if  $v \in \eta_i$ and $v$ is at distance at most $\ell$ from a corner of $\L_n$, remove $v$ from $\eta_i$ and add it as a singleton to the partition.
	Let $\mathcal{P}_\ell$ be the set of all boundary conditions obtained in this manner. 
	
	Consider $\Lambda_n$ with arbitrary realizable boundary conditions $\xi\in \mathcal{P}$.
	By Lemma~\ref{lem:comparison-tmix}, we see that there exists $C>0$ such that for every $\xi \in \mathcal P$, we have 
	\begin{align*}
	\tmix (\Lambda_n^\xi)\leq Cq^{8C\ell} \cdot n^2 \cdot \tmix (\Lambda_n^{\xi(\ell)})\,.
	\end{align*}
	It therefore suffices to prove the mixing time estimate uniformly over all modified boundary conditions $\eta \in \mathcal P_{\ell}$ for $\ell=5r$  and $r=c_0(\log n)$ with $c_0$ taken to be sufficiently large, as $q^{8C\ell}$ would only be polynomial in $n$. 
	By Theorem~\ref{thm:planar-msm:main}, for $c_0$ large enough, uniformly over all such boundary conditions we have moderate spatial mixing with respect to $\mathcal B_r$ and  $\eta  \in \mathcal P_{\ell}$ with $\delta < 1/(12|E(\L_n)|)$. 
	Theorem \ref{thm:planar:lm} implies that
	LM holds for $\mathcal B_r$ and $\eta  \in \mathcal P_{\ell}$ with 
	$T = O(n^c)$ where $c > 0$ constant. 
	The result then follows from Theorem~\ref{thm:general-mixing}.
\end{proof}

\begin{remark}
	\label{remark:mixing-in-rect}
	We note that Theorem \ref{thm:planar-mixing:intro} also holds for the FK-dynamics on rectangles $\Lambda_{n,l} \subset \Z^2$ with $\ell \le n$, 
	provided these rectangles are not too thin. For example, when $l = \Omega((\log n)^2)$, our proofs would yield that the mixing time of the FK-dynamics is polynomial in $n$. 
\end{remark}

\subsection{Local mixing for realizable boundary conditions}
\label{subsec:planar:lm}

In this subsection, we prove Theorem~\ref{thm:planar:lm}. As mentioned earlier, this theorem may be viewed as a corollary of Theorem~\ref{thm:main-thin-rect}, which bounds the mixing time of the FK-dynamics on thin rectangles.

\begin{proof}[Proof of Theorem~\ref{thm:planar:lm}]
	Let $r=c_0 \log n$. We wish to show that each of the subsets in $\mathcal B_r$ has mixing time $O(n^c)$ under the boundary conditions $(1,\xi)$ and $(0,\xi)$. 
	We begin by bounding the mixing time on the square boxes $C_{\north \east}$, $C_{\north\west}$, $C_{\south\east}$, $C_{\south\west}$ and $B(e,r)$ of $\mathcal B_r$.
	Since these have side length $O(\log n)$, a crude estimate on the mixing time is sufficient. For instance,  by Lemma~\ref{lem:comparison-tmix}, at a cost of $\exp(O(r))= n^{O(1)}$ factor, we can compare the mixing time in these boxes with boundary condition 
	either $(1,\xi)$ or $(0,\xi)$
	to the mixing time on equally sized boxes with free boundary conditions. In this setting, an upper bound of $O((\log n)^2 \log \log n)$ is known~\cite{BS}, and thus we obtain an $n^{O(1)}$ bound for their mixing times.
	
	It remains to bound the mixing time of the FK-dynamics on the set $R= R_{\north} \cup R_\east \cup R_\west \cup R_\south$. For this, we use Theorem~\ref{thm:main-thin-rect}. 
	We argue that the mixing time of the FK-dynamics on $R$
	is roughly equal to that of the FK-dynamics on a $[4(n-6r)-3]\times 2r$ rectangle $Q$ with a suitably chosen boundary condition. 
	We proceed to construct the rectangle $Q$ and a boundary condition $\xi'$ whose vertices, edges and wirings are identified with those of $R$ and $(1,\xi)$.
	The case of $R$ and $(0,\xi)$ is handled later in similar fashion.
	
	We introduce some notation first. 
	For a rectangle $S$, let $S^\alpha$ denote the rectangle that results from a clockwise rotation of $S$ by an angle of amplitude $\alpha$. Also, if $S_1,\dots,S_k$ are rectangles of the same height, let
	$[S_1,\dots,S_k]$ denote the rectangle obtained by identifying the vertices in $\b_\east S_i$ with those in $\b_\west S_{i+1}$ for all $i=1,\dots,k-1$. 
	When identifying the vertices, the double edges are removed.
	We take
	$$
	Q = \left[R_{\west}^{\pi/2},R_{\north},R_{\east}^{-\pi/2},R_{\south}^{-\pi}\right].
	$$
	Observe that every vertex of $Q$, except those where the boundary overlaps occur, correspond to exactly one vertex in $R$; vertices in the overlaps correspond to exactly two vertices in $R$.
	Conversely, every vertex in $R$ corresponds to exactly one vertex of $Q$. 
	The edges of $Q$ and $R$ are identified using this correspondence between the vertices.
	Observe also that $\b_\north Q = \b R \cap \b \L_n$. We construct the boundary condition $\xi'$
	of $Q$ as follows. If $u,v \in \b R \cap \b \L_n$ are wired in $\xi$, the corresponding vertices are also wired in $\xi'$. The boundary condition $\xi'$ is also wired along $\b_\west Q \cup \b_\north Q \cup \b_\east Q$.
	
	\begin{claim}
		\label{claim:thin-rectangles-bridges}
		The boundary condition $\xi'$ of $Q$ is realizable. 
		In particular, $\xi'$ can be realized by an FK configuration in the half plane of $\Z^2$
		containing only vertices north of $\b_\north Q$, and a wiring of $\partial _\east Q\cup \partial_\south Q \cup \partial_\west Q$.
	\end{claim}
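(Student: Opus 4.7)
We construct an explicit FK configuration $\omega'$ on $E(\Z^2)\setminus E(Q)$ witnessing the realizability of $\xi'$. Split $E(\Z^2)\setminus E(Q)$ into the edges of the upper half-plane strictly north of $\b_\north Q$ and the remaining edges (those south, east, or west of $Q$); on the latter set, declare every edge open. Since that region is connected and meets every vertex of $\b_\east Q\cup \b_\south Q\cup \b_\west Q$, this wires those three sides into a single cluster, supplying the wiring portion of the realization.

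The substance of the proof is the definition of $\omega'$ on the upper half-plane so that it induces on $\b_\north Q$ precisely the connections prescribed by $\xi'$ between the corresponding vertices of $\partial R\cap \partial \L_n$. Since $\xi$ is realizable, there is an FK configuration $\omega$ on $E(\Z^2)\setminus E(\L_n)$ inducing $\xi$, and by planarity of $\Z^2$ the partition $\xi$ is non-crossing with respect to the cyclic order on $\partial \L_n$. The construction of $Q$ identifies $\b_\north Q$ with the consecutive concatenation of $\b_\west R_\west$, $\b_\north R_\north$, $\b_\east R_\east$, $\b_\south R_\south$, which is exactly the cyclic order on $\partial \L_n$ traversed from (one side of) the SW corner of $\L_n$ back to itself, with the three other $\L_n$-corners absorbed into the three overlap columns of $Q$.

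Granting the above identification, we construct the upper half-plane portion of $\omega'$ using the standard planar realization principle: any non-crossing partition on a linearly-ordered set of points admits a realization by a planar FK configuration in an upper half-plane lattice, obtained by joining the vertices of each part with a disjoint tree of open lattice edges routed in the unbounded vertical space. Parts of $\xi$ whose vertices all lie strictly interior to $\b_\north Q$ (away from its two endpoints) are realized in this way directly. Parts that straddle the SW cut, i.e., contain vertices near both ends of $\b_\north Q$, are realized by routing open paths from each such vertex down to the top-left or top-right corner of $Q$; since those corners lie on $\b_\west Q$ and $\b_\east Q$ respectively, they already belong to the bottom-wired cluster, and the straddling part thereby merges into that cluster in the required manner.

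The main obstacle is the combinatorial translation from the cyclic non-crossing structure of $\xi$ on $\partial \L_n$ to the hybrid structure (linear non-crossing arcs plus a single wired sink) available on $Q$; the wiring of $\b_\east Q\cup \b_\south Q\cup \b_\west Q$ plays exactly the role of the sink, and this is what makes the realization possible. The three overlap-column identifications built into $Q$ correspond to the three absorbed $\L_n$-corners and are already encoded in the graph structure of $Q$ itself, so any connection that $\omega$ generates through those $\L_n$-corners transfers to $\omega'$ without requiring any additional edges.
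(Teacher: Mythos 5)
Your overall approach matches the paper's: both use the realizability of $\xi$ (planarity) to deduce a non-crossing structure on $\b_\north Q$, and then realize that structure by nested trees or arcs in the upper half-plane. The core of the argument is sound. However, two points in your write-up are confused, and one of them would produce the wrong realization if followed literally.

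First, a minor slip: the three $\L_n$-corners are \emph{not} ``absorbed into the three overlap columns of $Q$.'' The corner vertices of $\L_n$ lie in the corner boxes $C_{\north\west},C_{\north\east},C_{\south\east},C_{\south\west}$, which are disjoint from $R$ and hence do not appear in $Q$ at all; the overlap columns of $Q$ are identifications of the \emph{inner} vertical boundaries of consecutive $R_i$ (e.g.\ $\b_\north R_\west$ with $\b_\west R_\north$), which are $8r$ apart in $\partial\L_n$. This does not affect the argument, but the description is inaccurate.

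The more substantive issue is the ``straddling parts'' step. You treat parts of $\xi$ with vertices near both ends of $\b_\north Q$ as a special case and propose to route them into the top corners of $Q$ so that they merge with the wired cluster on $\b_\east Q\cup\b_\south Q\cup\b_\west Q$. But $\xi'$ only merges a component of $\xi$ with that wired cluster if the component actually contains one of the two end vertices of $\b_\north Q$ (the top corners of $Q$). A component whose vertices are merely \emph{near} the ends but do not include them is \emph{not} wired to the big cluster in $\xi'$, and routing it to the corners would realize a strictly coarser boundary condition than $\xi'$. The step is also unnecessary: since the linear order of $\b_\north Q$ is a monotone restriction of the (clockwise) cyclic order on $\partial\L_n$ with the ``cut'' sitting in the $C_{\south\west}$ gap, cyclic non-crossing of $\xi$ restricts directly to linear non-crossing of $\xi'$ on $\b_\north Q$, and \emph{every} linearly non-crossing partition — including one with a part spanning nearly the full interval — is realizable by the nested-tree construction alone, choosing the heights $h_C$ to respect nesting. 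This is precisely what the paper does: it shows that whenever $v_1,v_2\in\b_\north Q$ are wired in $\xi'$, the interval $\llb v_1,v_2\rrb$ is disconnecting, and then realizes each component $C$ with a gadget of $k$ vertical paths joined by a horizontal path at height $h_C$. There is no ``sink'' needed in the half-plane realization; the wiring of the other three sides is a separate, trivial piece of the realization. Dropping the ``straddling parts'' clause and handling all parts uniformly makes your argument both correct and simpler, and recovers the paper's proof.
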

	
	Finally, to completely capture the effect of the boundary condition $(1,\xi)$ on $R$, 
	each of the three columns in $Q$ that corresponds to overlaps of columns from $R$, are externally wired.
	
	Now, by Theorem \ref{thm:main-thin-rect} and Lemma \ref{lem:comparison-tmix},
	we have
	$$
	\gap(Q^{\xi'}) = n^{-O(1)}\,.
	$$ 
	We claim next that the FK-dynamics on $R$ with boundary condition $(1,\xi)$
	has roughly the same gap as the FK-dynamics on $Q$ with boundary condition $\xi'$.
	To see this, 
	we add a double edge to each edge of $Q$ that corresponds to two edges in $R$.
	With this modification, there is now a one-to-one correspondence between the FK configurations in $R$ and $Q$. 
	Also, adding these edges has almost no effect on the mixing time of the FK-dynamics in $Q$, as their endpoints are wired, and so they only need to be updated once to mix.
	Moreover, by construction, the boundary condition $\xi'$ for $Q$ together with the wiring of the overlapping columns in $Q$ encode exactly the same connectivities as the boundary condition $(1,\xi)$ for $R$.
	Hence, for every pair of FK configurations on $Q$,
	the FK-dynamics has the same transition probability as FK-dynamics on $R$ 
	between the corresponding configurations. Consequently,  we can conclude 
	that
	$$
	\gap(R^{(1,\xi)}) = n^{-O(1)}\,.
	$$
	
	Finally, for the case of the FK-dynamics on $R$ with boundary condition $(0,\xi)$ we can simply wire
	$\b_\west R_\north$ to $\b_\north R_\west$,
	$\b_\south R_\west$ to $\b_\west R_\south$,
	$\b_\east R_\south$ to $\b_\south R_\east$ and
	$\b_\north R_\east$ to $\b_\east R_\north$, which only incur a penalty of $n^{O(1)}$
	by Lemma~\ref{lem:comparison-tmix} and proceed as in the previous case.
\end{proof}

\begin{proof}[Proof of Claim \ref{claim:thin-rectangles-bridges}]
	First note that $\b_\north Q$ corresponds to $\b R \cap \b \L_n$. Let $\omega$ be an FK configuration on $\Z^2 \setminus \L_n$ that 
	realizes $\xi$. A path from $u \in \b R \cap \b \L_n$ to $v \in \b R \cap \b \L_n$ in $\omega$ splits $\b R \cap \b \L_n$ into two parts $R_1$ and $R_2$, one containing all the vertices 
	from $u$ to $v$ in $\b R \cap \b \L_n$ clockwise and the other all the vertices from $u$ to $v$ in $\b R \cap \b \L_n$ counterclockwise.  
	The planarity of $\Z^2$ implies that any other boundary component  will be either completely contained in $R_1$ or $R_2$.
	From this property, it follows that if $v_1,v_2 \in \b_\north Q$ are wired in $\xi'$, then $\llb v_1,v_2\rrb$ is a disconnecting interval.
	This implies that the connectivities of $\xi'$ in $\b_\north Q$ can be realized by a configuration on the half plane of $\Z^2$ that contains all the vertices north of $\b_\north Q$.
	For example, every component $C = \{c_0,\dots,c_k\}$ of $\xi$, with $c_i$ to the left of $c_{i+1}$,  
	can be realized by 
	the gadget consisting $k$ paths 
	of length $h_C$ starting at $c_0,\dots,c_k$ and going north, together with one path 
	parallel to $\b_\north Q$ that joins the endpoints of all of these path. 
	Since $\llb c_i,c_{i+1}\rrb$ is a disconnecting interval for all $i$ and $C$,
	we can choose $h_C$ for each $C$ so that the resulting configuration is a valid configuration in the half plane.
\end{proof}

\subsection{Moderate spatial mixing for realizable boundary conditions}
\label{subsec:planar:msm}

In this section we prove Theorem \ref{thm:planar-msm:main}.
We reduce the moderate spatial mixing condition~\eqref{eq:ssm:def} to bounding the 
probability of certain connectivities in an FK configuration.
Specifically, if $e \in S \subset \L_{n}$, the configuration on $E^c(S)$
affects the state of $e$ when there are paths
from $e$
to the boundary of $S$; the probability of such paths is maximized when we assume an all wired configuration on $E^c(S)$. Recall that for $S \subset \L_{n}$, we let $S^c = \L_{n} \setminus S$, and we use  $E^c(S) = E(\L_{n})\setminus E(S)$.

\begin{lemma}\label{lemma:ssm:reduction}
	Consider the FK model on $\L_{n}$
	with arbitrary boundary condition $\xi$ on $\partial \L_{n}$. 
	For any $e \in E(\L_{n})$, any $S \subset \L_{n}$ such that $e \in E(S)$, and any pair 
	of configurations $\omega_1$, $\omega_2$ on $E^c(S)$:
	\begin{equation}\label{eq:bc:reduction}
	\left|\pi^\xi_{\L_{n}}(\,e=1\mid E^c(S)=\omega_1\,)-\pi^\xi_{\L_{n}}(\,e=1\mid E^c(S)=\omega_2\,)\right| ~\le~ \pi^\xi_{\L_{n}} \left(\, \{e\} \stackrel{\xi}\longleftrightarrow \b S\setminus \b\L_n \mid E^c(S)=1\, \,\right), \notag
	\end{equation}
	where  $\{e\} \stackrel{\xi}\longleftrightarrow  \b S$ denotes the event that there is a path from $e$ to $\b S$ 
	taking into account the connections induced by $\xi$. 
\end{lemma}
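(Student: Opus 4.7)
\medskip

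\noindent\textbf{Proof proposal for Lemma~\ref{lemma:ssm:reduction}.}
The plan is to couple the three conditional measures
$\pi^\xi_{\L_n}(\,\cdot\mid E^c(S)=\omega_1\,)$,
$\pi^\xi_{\L_n}(\,\cdot\mid E^c(S)=\omega_2\,)$,
and $\pi^\xi_{\L_n}(\,\cdot\mid E^c(S)=1\,)$ via a disagreement-percolation style argument almost identical in spirit to the three-way coupling already used in the proof of Lemma~\ref{lemma:main:coupling}. Since $\omega_i \le 1$ on $E^c(S)$, the FKG/monotonicity in boundary conditions for $q\ge 1$ gives
$\pi^\xi_{\L_n}(\,\cdot\mid E^c(S)=\omega_i\,) \preceq \pi^\xi_{\L_n}(\,\cdot\mid E^c(S)=1\,)$ for $i=1,2$. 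I would first sample $\omega^{\max}$ from $\pi^\xi_{\L_n}(\,\cdot\mid E^c(S)=1\,)$ and then use monotone couplings to produce $\omega^1,\omega^2$ from the two targets with $\omega^1\le \omega^{\max}$ and $\omega^2\le\omega^{\max}$ on $E(S)$ (both agreeing with $\omega_1,\omega_2$ respectively off $E(S)$).

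Next, I would introduce the ``free region''
\[
F(\omega^{\max}) \;=\; S \setminus \bigcup_{v\in\, \partial S\setminus \partial \L_n} C^\xi(v,\omega^{\max})\,,
\]
where $C^\xi(v,\omega^{\max})$ is the vertex set of the connected component of $v$ in $\omega^{\max}$, taking into account the wirings of $\xi$. The key observation is that every edge of $E(S)$ with an endpoint on $\partial F(\omega^{\max})$ is closed in $\omega^{\max}$, hence closed in $\omega^1$ and $\omega^2$ by monotonicity. Combined with the fact that the connections of $\xi$ entering $F(\omega^{\max})$ are the same in all three boundary conditions (they only depend on $\xi$ restricted to $\partial\L_n$ and on the clusters hitting $\partial S\setminus\partial \L_n$, whose exteriors are already part of $S\setminus F$), the boundary conditions induced on $\partial F(\omega^{\max})$ by the three configurations coincide. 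The domain Markov property of the random-cluster model then lets us resample the configuration inside $F(\omega^{\max})$ simultaneously in all three copies from the same measure, so that $\omega^1$ and $\omega^2$ agree on $E(F(\omega^{\max}))$ with probability one.

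Finally, I would finish as in a standard coupling inequality: since
\(
\{\omega^1(e)\neq \omega^2(e)\} \subseteq \{e\notin E(F(\omega^{\max}))\},
\)
and the latter event is exactly $\{\{e\}\stackrel{\xi}{\longleftrightarrow}\partial S\setminus \partial\L_n\}$ under $\omega^{\max}\sim \pi^\xi_{\L_n}(\,\cdot\mid E^c(S)=1\,)$, we obtain
\[
\bigl|\pi^\xi_{\L_n}(e=1\mid E^c(S)=\omega_1)-\pi^\xi_{\L_n}(e=1\mid E^c(S)=\omega_2)\bigr|
\;\le\; \Pr[\omega^1(e)\neq \omega^2(e)] \;\le\; \pi^\xi_{\L_n}\!\bigl(\{e\}\stackrel{\xi}{\longleftrightarrow}\partial S\setminus\partial\L_n\mid E^c(S)=1\bigr),
\]
which is the desired bound.

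The main obstacle, as in Lemma~\ref{lemma:main:coupling}, is verifying that the induced boundary conditions on $\partial F(\omega^{\max})$ really are identical under the three different external configurations $\omega_1$, $\omega_2$ and $1$; this requires a careful bookkeeping of how $\xi$-connections on $\partial \L_n$ interact with the components of $\omega^{\max}$ touching $\partial S\setminus \partial\L_n$, so that the ``shielded'' region $F(\omega^{\max})$ is genuinely insulated from the choice of $\omega_i$. Once this bookkeeping is in place, the domain Markov property and the monotone coupling give the inequality essentially for free.
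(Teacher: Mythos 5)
Your proposal matches the paper's argument. The paper defines the same shielded region (called $\Gamma^{(1,\xi)}(\omega)$ there, your $F(\omega^{\max})$), asserts the existence of the same three-way coupling with the same three properties ($\omega^1,\omega^2 \le \omega^{\max}$ and agreement on edges inside the shielded region), and derives the inequality exactly as you do; the paper then declares the construction of the coupling itself "standard" and refers to \cite{Alexander,BS} and the proof of Lemma~\ref{lemma:main:coupling}. Your writeup usefully spells out the construction that the paper omits, and it correctly identifies the one genuinely subtle step: verifying that the boundary conditions induced on $\b F(\omega^{\max})$ by the three external configurations coincide.

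On that step your explanation is a touch loose, so let me record why it works. The crucial observation is that if $u\in\b F(\omega^{\max})$ has any connection in $\xi$ (or via an open edge of $E^c(S)$) to a vertex $w$ lying in $S\setminus F(\omega^{\max})$ or in $\L_n\setminus S$, then under the all-wired exterior $w$ is connected to $\b S\setminus\b\L_n$, which would force $u$ out of $F(\omega^{\max})$ --- a contradiction. So in fact vertices of $\b F(\omega^{\max})$ carry no $\xi$-wirings to, and no open edges toward, anything outside $F(\omega^{\max})$: the only external information they see is the $\xi$-wirings internal to $\b F(\omega^{\max})\cap\b\L_n$, which is identical for all three copies. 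All edges from $F(\omega^{\max})$ into $S\setminus F(\omega^{\max})$ are closed in $\omega^{\max}$ and hence, by the monotone coupling, in $\omega^1,\omega^2$. That is the full content of "identical induced boundary conditions," and with it the domain Markov resampling and the final inequality go through exactly as you write. (Strictly speaking, this reasoning uses that the component of $\L_n\setminus S$ neighboring any $E^c(S)$-edge at $u$ reaches $\b S\setminus\b\L_n$; this is automatic for the rectangular sets $B(e,r)$, $C_{\cdot}$, and $R$ to which the paper applies the lemma.)
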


\noindent
In the proof of Theorem~\ref{thm:planar-msm:main} we use this lemma; its proof via machinery from~\cite{Alexander} will be straightforward.  

\begin{proof}[Proof of Theorem \ref{thm:planar-msm:main}]
	We need to show that as long as $c_0$ is large enough, for every $e \in E(\L_{n})$
	there exists $B_e \in \mathcal{B}_r$ 
	such that \eqref{eq:ssm:def} holds for some $\delta < 1/(12|E(\L_{n})|)$.
	For each $e \in E(\L_{n})$ the subset $B_e$ is chosen as follows:
	\begin{enumerate}
		\item If $d(\{e\},\b\L_n) > r$, then $B_e = B(e,r)$;
		\item Otherwise, if $e \in R$ and
		$
		d(\{e\},\partial R\setminus \partial \Lambda) \ge r,
		$
		then $B_e = R$;		
		\item Otherwise, $e \in C_i$ for some $i \in \{\north\east,\north\west,\south\west,\south\east\}$, and we take $B_e = C_i$.
	\end{enumerate}
	By Lemma \ref{lemma:ssm:reduction}, for every $e\in E(\L_{n})$,
	$$
	\left|\pi_{\L_{n}}^\xi(\,e=1\mid E^c(B_e)=1\,)-\pi^\xi_{\L_{n}}(\,e=1\mid E^c(B_e)=1\,)\right| ~\le~ \pi^\xi_{\L_{n}} \left(\, e \stackrel{\xi}\longleftrightarrow \b B_e \setminus \partial \Lambda_{n} \mid E^c(B_e)=1\, \,\right).
	$$
	In all three cases above, by construction $d(\{e\},\b B_e\setminus \partial \L_{n}) \ge r$.
	This together with
	the fact that all vertices of $\b\L_{n}$
	within distance $5r$ from the corners have no connections in $\xi$, implies that 
	for $e$ to be connected to $\b B_e\setminus \partial \L_{n}$ a path of open edges 
	reaching a distance at least $r$ is required in the configuration on $B_e$. 
	The EDC property (see \eqref{eq:EDC}) implies that for $c_0$ large enough
	\begin{equation*}
	\label{eq:ssm:planar}
	\pi^\xi_{\L_{n}} \left(\, \{e\} \stackrel{\eta}\longleftrightarrow \b B_e \mid E^c(B_e)=1\, \,\right) \le \frac{1}{12|E(\L_{n})|}\,,
	\end{equation*}
	and the result follows.
\end{proof}

\noindent
We conclude this section with the proof of Lemma \ref{lemma:ssm:reduction}. 

\begin{proof}[Proof of Lemma \ref{lemma:ssm:reduction}]
	Let $(1,\xi)$ be the boundary condition induced on $S$ by $\xi$ and the event $\{E^c(S)=1\}$.
	Similarly, let $\theta_1$ (resp., $\theta_2$) be the boundary condition induced on $S$ by configurations $\omega_1$ (resp., $\omega_2$) on $E^c(S)$ and $\xi$. For ease of notation set 
	$\pi^{\theta_1} = \pi^\xi_{\L_{n}}(\cdot\mid E^c(S)=\omega_1)$, 
	$\pi^{\theta_2} = \pi^\xi_{\L_{n}}(\cdot\mid E^c(S)= \omega_2)$ and 
	$\pi^{(1,\xi)} = \pi^{\xi}_{\L_{n}}(\cdot\,\mid E^c(S)=1)$. 
	For an FK configuration $\omega$ on $S$ let
	\begin{equation}\label{def:Gamma}
	\Gamma^{(1,\xi)}(\omega) := S \setminus  \bigcup\nolimits_{v \in \b S\setminus \b \L_{n}}\, C(v,\omega)\,,\notag
	\end{equation}
	where $C(v,\omega)$ is the set of vertices in the connected component of $v$ in $\omega$, taking into account the 
	connectivities induced by $(1,\xi)$.	
	In words, $\Gamma^{(1,\xi)}(\omega)$ is the set of vertices of $S$ not connected to $\b S\setminus \b\L_{n}$ in $\omega$ using possibly the boundary connections. 
	
	We claim that there exists a coupling $\P$ of the distributions $\pi^{\theta_1}$, $\pi^{\theta_2}$ and $\pi^{(1,\xi)}$ such that $\P(\omega_1,\omega_2,\omega) > 0$ only if $\omega_1 \leq \omega$ and  $\omega_2  \leq \omega$ on $E(S)$ and $\omega_1$, $\omega_2$ agree on all edges with both endpoints in $\Gamma^{(1,\xi)}(\omega)$.
	Given this coupling $\P$, we have
	\begin{align*}
	\label{eq:ssm:coupling}
	|\pi^\xi_{\L_{n}}(\,e=1\mid E^c(S)=\omega_1\,)-\pi^\xi_{\L_{n}}(\,e=1 \mid E^c(S)=\omega_2\,)| 
	& \le \P(\,\omega_1(e) \neq \omega_2(e)\,) \notag \\
	& \le \P\left(\,e\notin E(\Gamma^{(1,\xi)}(\omega) )  \,\right)\notag\\
	&= \pi^\xi_{\L_{n}} \left(\,e \stackrel{\xi} \longleftrightarrow \b S \setminus \b \L_{n} \mid E^c(S)=1\,\right), 
	\end{align*}
	as claimed.	
	The construction of the coupling $\P$ is standard and is thus ommitted; see, e.g., \cite{Alexander,BS} and the proof Lemma \ref{lemma:main:coupling} for similar constructions.
\end{proof}

\section{Near optimal mixing for typical boundary conditions}
\label{section:typical}
In this section we provide the proof of 
Theorem \ref{thm:typical-mixing:intro}, where we establish
a sharper $\tilde O(n^2)$ mixing time upper bound for the FK-dynamics on 
$\L_n = (\L_n,E(\L_n))$ for the class of boundary conditions we call
\emph{typical}.

\begin{definition}
	Let $\omega$ be a random-cluster configuration on $\Z^2$, and let
	 $\xi_\omega$ be the boundary condition on $\b\L_n$
	induced by the edges of~ $\omega$ in $E(\Z^2) \setminus E(\L_n)$.
	Suppose $\omega$ is sampled from $\pi_{\Z^2,p,q}$. 
	A set $\mathcal{C}$ of realizable boundary conditions for $\L_n$
	is called \emph{typical} (with respect to $(p,q)$) if $\xi_{\omega} \in \mathcal{C}$ with probability $1-o(1)$.
\end{definition}

Recall from Definition~\ref{def:a-localized} the classes of boundary conditions $\mathcal C_\alpha$ and $\mathcal C_\alpha^\star$, consisting of realizable boundary conditions whose distinct boundary components consist only of vertices at most distance $\alpha \log n$ apart in $\partial \L_n$. 
The following is a straightforward consequence of the EDC property \eqref{eq:EDC} when $p<p_c(q)$.

\begin{lemma}\label{lem:localized-typical}
	For every $q\geq 1$ and $p< p_c(q)$, the class of boundary conditions $\mathcal{C}_\alpha$ is typical with respect to $(p,q)$ for sufficiently large $\alpha > 0$. Similarly, for every $q\geq 1$ and $p>p_c(q)$, the class $\mathcal C_\alpha^\star$ is typical with respect to $(p,q)$ for sufficiently large $\alpha>0$. 
\end{lemma}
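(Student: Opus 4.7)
The plan is to reduce the event $\{\xi_\omega \notin \mathcal C_\alpha\}$ to a union of long-range connectivity events for $\omega$ and then apply the exponential decay of connectivities \eqref{eq:EDC} together with a union bound. The case $p>p_c(q)$ will follow from the case $p<p_c(q)$ via planar duality.

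I treat $p<p_c(q)$ first. First I would observe that $\xi_\omega \notin \mathcal C_\alpha$ if and only if some component of $\xi_\omega$ contains a pair of vertices $u,v \in \partial \L_n$ whose boundary-graph distance $d_\partial(u,v)$ exceeds $\alpha \log n$: the endpoints of any minimal arc $L(\xi_i)$ witnessing $|L(\xi_i)| > \alpha \log n$ form such a pair. For such $u,v$ to lie in the same component of $\xi_\omega$ they must be connected through $\omega$ using only edges of $E(\Z^2)\setminus E(\L_n)$, which in particular forces $u \leftrightarrow v$ in the full configuration $\omega$ on $\Z^2$. Since $q>1$ and $p\neq p_c(q)$, the infinite-volume measure $\pi_{\Z^2,p,q}$ is unique and is the monotone limit of the free-boundary measures $\pi^0_{\L_N,p,q}$ as $N\to\infty$; applying \eqref{eq:EDC} in each $\L_N$ and passing to the limit yields
$$\pi_{\Z^2,p,q}(u \leftrightarrow v) \le e^{-c\, d(u,v)},$$
with $c=c(p,q)>0$ the constant from \eqref{eq:EDC} and $d$ the graph distance in $\Z^2$.

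Next I would relate $d(u,v)$ to $d_\partial(u,v)$ for $u,v \in \partial\L_n$. A short case analysis over the positions of the two vertices on the four sides of $\partial\L_n$ (same side, adjacent sides through a corner, opposite sides) shows $d(u,v) \ge d_\partial(u,v)/2$, the extremal case being two vertices directly across on opposite sides of $\L_n$. Since the number of pairs on $\partial\L_n$ at boundary distance exactly $k$ is $O(n)$ uniformly in $k\le 2n$, a union bound over such pairs gives
$$\Pr(\xi_\omega \notin \mathcal C_\alpha) \;\le\; \sum_{k > \alpha \log n}^{2n} O(n)\, e^{-ck/2} \;=\; O\!\left(n^{1-c\alpha/2}\right),$$
which is $o(1)$ as soon as $\alpha > 2/c$.

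For $p>p_c(q)$, planar duality does all the work. The dual configuration $\omega^\star$ of $\omega \sim \pi_{\Z^2,p,q}$ has law $\pi_{(\Z^2)^\star,p^\star,q}$ with $p^\star<p_c(q)$, and by the duality construction recalled in Section~\ref{sec:prelim} the connections of $\omega^\star$ in the complement of $\L_n^\star$ induce exactly $\xi_\omega^\star$. Running the previous two paragraphs on $\omega^\star$ in the dual lattice yields $\xi_\omega^\star \in \mathcal C_\alpha$ with probability $1-o(1)$, i.e.\ $\xi_\omega \in \mathcal C_\alpha^\star$ with probability $1-o(1)$. No step presents a genuine obstacle; the only mildly technical point is transferring \eqref{eq:EDC} from finite volumes to $\pi_{\Z^2,p,q}$, which is standard from monotonicity together with uniqueness of the infinite-volume measure off the critical point.
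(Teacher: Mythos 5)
Your proof follows essentially the same route as the paper's: reduce to $p<p_c(q)$ by planar duality, bound $\pi_{\Z^2}(u\leftrightarrow v)$ by the exponential decay of connectivities \eqref{eq:EDC}, and union-bound over pairs of boundary vertices. You fill in two small details the paper leaves implicit (the passage from finite-volume EDC to $\pi_{\Z^2,p,q}$, and the comparison $d(u,v)\ge d_\partial(u,v)/2$), while sharing with the paper the minor unstated technicality of extracting a far-apart connected pair from a component with $|L(\xi_i)|>\alpha\log n$; this is routine and does not affect the argument.
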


\begin{proof}
By planar duality (namely the duality of the sets of boundary conditions $\mathcal C_\alpha$ and $\mathcal C_\alpha^\star$, it suffices to prove the case $p<p_c(q)$). 
	For any $u,v \in \Z^2$, by the EDC property~\eqref{eq:EDC}, we have that for $q \ge 1$ and $p < p_c(q)$ there exists $c = c(p,q) > 0$ such that
	$
	\pi_{\Z^2}(u \leftrightarrow v) \le {\e}^{-c d(u,v)}\,.
	$
	Let $u, v \in \b\L_n$ and suppose $d(u,v) \ge \alpha \log n$. 
	Then, there exists some $C(p,q)>0$ such that for sufficiently large $\alpha > 0$,
	\begin{align*}
	\pi_{\Z^2\setminus \L_n}(u\stackrel{\Z^2\setminus\L_n}\longleftrightarrow v) = \pi_{\Z^2}(u\stackrel{\Z^2\setminus\L_n}\longleftrightarrow v) \le \pi_{\Z^2}(u \leftrightarrow v) \le C{\e}^{-c\alpha \log n} \le \frac{1}{n^3}\,,
	\end{align*}
	where recall that $u\stackrel{\Z^2\setminus\L_n}\longleftrightarrow v$
	denotes the event that there exists a path from $u$ to $v$ in $\Z^2\setminus\L_n$. 
	A union bound over all pairs of vertices in $\b\L_n$ implies that
	if $\omega$ is sampled from $\pi_{\Z^2}$
	and $\xi_{\omega}$ is the resulting boundary condition on $\b\L_n$,
	then $\xi_\omega \in \mathcal{C}_\alpha$ with probability $1-o(1)$ and thus $\mathcal{C}_\alpha$ is typical.
\end{proof}

\begin{remark}\label{rem:finite-typicality}
As mentioned in the introduction, one may also be interested in the following notion of typicality, which sometimes comes up in recursive mixing time upper bounds. Let $q\geq 1$ and $p<p_c(q)$ (resp., $p>p_c(q)$) and consider a random-cluster sample from $\pi_{R_{2n},p,q}^{\zeta}$, where $R_{2n}$ is the concentric box of side length $2n$ containing $\L_n$ with arbitrary boundary condition $\zeta$. One could easily show that the boundary condition induced by the configuration on $R_{2n}\setminus \L_{n}$ is in $\mathcal C_\alpha$ (resp., $\mathcal C_\alpha^\star$) with probability $1-o(1)$. This follows by coupling this measure to the infinite-volume measure using the fact that $\mathcal C_\alpha$ is a decreasing event, and finding a dual circuit in the annulus $R_{2n}\setminus \L_{n}$ (which exists with probability $1-O(e^{-\Omega(n)})$).  
\end{remark}

\noindent
We now show that when $p<p_c(q)$, the mixing time on $\L_n$
with boundary condition $\xi \in \mathcal{C}_\alpha$ satisfies
$$
\tmix(\L_n^\xi) = O\left(n^2 (\log n)^C\right)\,,
$$
where $C=C(p,q,\alpha) > 0$ is a constant independent of $n$ and $\xi$. In particular, we prove Theorem \ref{thm:typical-mixing:intro} from the introduction in the regime $p<p_c(q)$ and $\xi\in \mathcal C_\alpha$ and this translates to a matching bound at $p>p_c(q)$ and $\xi \in \mathcal C_\alpha^\star$ by duality.
To prove this theorem we again use the general framework from Theorem~\ref{thm:general-mixing}. Namely, we construct a collection of subsets of $\L_n$ for which we can establish MSM and LM; see Definitions~\ref{def:msm} and~\ref{def:lm}. The fact that $\xi\in \mathcal C_\alpha$ will allow us to prove MSM with respect to $\Theta((\log n)^2)\times \Theta((\log n)^2)$ rectangles along the boundary, and Theorem~\ref{thm:planar-mixing:intro} will provide the LM estimate on these rectangles.

Consider the collection $\B_r = \{B(e,r): e \in E(\L_n)\}$.
Recall that for $r \geq 0$ and $e \in E(\L_n)$, we set $B(e,r) \subset \L_n$ to be the set of vertices in the minimal square box around $e$ such that $d(\{e\},\L_n\setminus B(e,r)) \ge r$; see Figure \ref{figure:planar:blocks}(c).
We first show that MSM holds for $\B_r$ and $\xi \in \mathcal{C}_\alpha$
when $r = \Theta((\log n)^2)$ and $\delta < n^{-3}$.

\begin{lemma}
	\label{lemma:ssm:typical} 
	Let $q \ge 1$, $p < p_c(q)$, $\alpha > 0$, $\eta \in \mathcal{C}_\alpha$, $r = c_0(\log n)^2$
	and $\mathcal{B} = \{B(e,r): e \in E(\L_n)\}$.
	For large enough $c_0>0$, 
	MSM holds for $\eta$, $\mathcal{B}$ for some $\delta < n^{-3}$.
\end{lemma}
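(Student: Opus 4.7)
The plan is to apply Lemma~\ref{lemma:ssm:reduction} to reduce MSM for the collection $\mathcal B$ to showing that, for each $e\in E(\L_n)$,
\[
\pi^\eta_{\L_n}\bigl(\{e\}\stackrel{\eta}{\longleftrightarrow}\partial B(e,r)\setminus\partial\L_n \,\big|\,E^c(B(e,r))=1\bigr)\le n^{-3}.
\]
For edges $e$ in the interior with $d(e,\partial\L_n)>r$, the box $B(e,r)$ does not touch $\partial\L_n$, the connections in $\eta$ are irrelevant, and the EDC bound~\eqref{eq:EDC} combined with a union bound over the $O(r)$ possible endpoints immediately gives a bound of order $O(r)e^{-cr}\ll n^{-3}$ for any $c_0>0$. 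The substance of the lemma is handling edges $e$ within distance $r$ of $\partial\L_n$, where $\eta$ can create long-range shortcuts.

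For those edges, my aim is to establish an effective exponential decay of $\eta$-augmented connectivities,
\[
\pi^\eta_{\L_n}\bigl(u\stackrel{\eta}{\longleftrightarrow} v \,\big|\,E^c(B(e,r))=1\bigr)\le \exp\!\left(-c'\cdot\tfrac{d(u,v)}{\alpha \log n}\right),
\]
valid for all $u,v\in B(e,r)$, with $c'=c'(p,q)>0$. Once this is in hand, applying it with $u=e$ and union-bounding over $v\in\partial B(e,r)\setminus\partial\L_n$ gives a bound of order $(\log n)^{O(1)}\cdot n^{-c'c_0/\alpha}$, which is less than $n^{-3}$ when $c_0$ is chosen sufficiently large in terms of $\alpha$ and $c'$.

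The proof of the effective EDC rests on the geometric consequence of $\eta\in\mathcal C_\alpha$: every boundary component of $\eta$ has $\Z^2$-diameter at most $\alpha\log n$, so any $\eta$-augmented path decomposes into open-edge segments and boundary jumps, with each jump covering at most $\alpha\log n$ in $\Z^2$-distance. I will obtain the bound via a renormalization at scale $\ell=\Theta(\alpha\log n)$: tile $B(e,r)$ by boxes of side $\Theta(\ell)$, so that by localization each boundary component of $\eta$ meets at most two adjacent boxes. Any augmented path from $u$ to $v$ must then cross $\Theta(d(u,v)/\ell)$ such boxes; conditioning on the configuration outside a given box (via the domain Markov property), the probability of an open-path crossing inside that box is at most $\exp(-\Omega(\ell))=n^{-\Omega(\alpha)}$ by EDC, while boundary-component shortcuts can be absorbed into a deterministic count of ``free'' crossings that do not affect the exponential rate. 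Iterating box by box produces the claimed exponential decay in $d(u,v)/\ell$.

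The principal technical obstacle is the interaction between the positive correlations of the random-cluster measure and the accumulation of crossings across boxes: FK measures satisfy FKG but not BK directly, so naive multiplication of per-box crossing probabilities is not justified. This is why the renormalization is done by iterative conditioning rather than by a joint union bound, letting one apply EDC inside each box against whatever boundary condition is induced by the already-revealed outside configuration. A careful accounting of the ``free'' crossings produced by boundary components straddling box boundaries—whose total count along any chain of boxes is controlled deterministically by the $\alpha$-localization of $\eta$—is needed to ensure that the exponential decay in the number of box crossings survives, yielding the required $n^{-3}$ bound after choosing $c_0$ large.
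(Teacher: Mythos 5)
Your high-level strategy matches the paper's: reduce via Lemma~\ref{lemma:ssm:reduction} to bounding the conditional $\eta$-augmented connectivity $\pi^{(1,\eta)}(\{e\}\stackrel{\eta}{\longleftrightarrow}\partial B\setminus\partial\L_n)$, dispose of interior $e$ by ordinary EDC, and for $e$ near $\partial\L_n$ establish an \emph{effective} exponential decay at the renormalized scale $\Theta(\alpha\log n)$ by conditioning sequentially along a chain of $\Theta(\log n)$ checkpoints, yielding a bound like $(\text{const})^{\Theta(\log n)}\le n^{-3}$ once $c_0$ is large. The paper implements the chain via a sequence of disconnecting-interval events $\mathcal E_i$ decorrelated by replacing each with a local event $\mathcal E_i'$ supported on a rectangle $Q_i$ and then wiring the complements of these disjoint rectangles; your tiling-by-boxes is a natural variant of the same idea.

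There is, however, a genuine gap in the quantitative heart of your argument. You claim that, after conditioning on the configuration outside a box of side $\ell=\Theta(\alpha\log n)$, the probability of crossing that box is at most $\exp(-\Omega(\ell))=n^{-\Omega(\alpha)}$, and that boundary-component shortcuts can be segregated as a deterministic number of ``free'' crossings. Both halves of this dichotomy fail. A box of side $C\alpha\log n$ cannot be crossed by any \emph{single} boundary component (diameter $\le\alpha\log n$), so there are no ``free'' crossings; but it \emph{can} be crossed, with probability bounded below by a constant independent of $n$, by a chain of $\Theta(C)$ boundary components joined by $\Theta(C)$ single open edges along $\partial\L_n$ (e.g.\ if $\eta$ partitions $\partial_\north B$ into consecutive intervals of length $\approx\alpha\log n$ each). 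Thus the conditional per-box crossing probability is only $1-\rho$ for some constant $\rho=\rho(p,q,C)>0$, not $\exp(-\Omega(\ell))$; the paper proves exactly this constant bound in~\eqref{eq:local:connection}. Your stated per-box rate would give a total bound of $\exp(-\Omega(r))$, which is false. The saving grace is that the weaker constant per-box bound $1-\rho$, iterated over $\Theta(r/\ell)=\Theta(\log n)$ boxes, still yields $(1-\rho)^{\Theta(\log n)}\le n^{-3}$ for $c_0$ large, so the conclusion survives — but your proof as written does not establish it. To repair it, you should drop the $\exp(-\Omega(\ell))$ per-box claim and the ``free crossing'' bookkeeping, and instead prove directly (via EDC plus forcing a bounded set of edges to be closed, as the paper does) that, under the worst-case wired conditioning on the complement of each box, the probability of an $\eta$-augmented left-right crossing of the box is at most $1-\rho$, and then exploit monotonicity to decouple the boxes as in the paper.
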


\begin{figure}[t]
	\begin{center}
		\begin{tikzpicture}
		\draw[draw=black,dashed] (0,0)--(0,3);
		\draw[draw=black,dashed] (7,3)--(7,0);
		\draw[draw=black] (-0.75,3)--(7.75,3);
		
		\draw[draw=black,dashed] (2.5,1) rectangle (4.5,3);
		
		\draw[color=black,very thick] (3.4,3) -- (3.6,3);
		\draw[color=black,fill=black] (3.4,3) circle (.04);
		\draw[color=black,fill=black] (3.6,3) circle (.04);
		
		\draw [rounded corners] 
		(3.4,3)--(3.4,3.15)--(2.9,3.15)--(2.9,3);
		
		\draw [rounded corners] 
		(2.7,3)--(2.7,3.15)--(2.2,3.15)--(2.2,3);
		
		\draw [rounded corners] 
		(2,3)--(2,3.15)--(1.5,3.15)--(1.5,3);
		
		\draw [rounded corners] 
		(0.25,3)--(0.25,3.15)--(-0.25,3.15)--(-0.25,3);
		
		\draw [rounded corners] 
		(3.6,3)--(3.6,3.15)--(4.1,3.15)--(4.1,3);
		
		\draw [rounded corners] 
		(4.3,3)--(4.3,3.15)--(4.8,3.15)--(4.8,3);
		
		\draw [rounded corners] 
		(5,3)--(5,3.15)--(5.5,3.15)--(5.5,3);
		
		\draw [rounded corners] 
		(6.75,3)--(6.75,3.15)--(7.25,3.15)--(7.25,3);

		\put(98,79.5){\tiny $e$} ;
		\put(-28,86){\tiny $\b\L$};
		\put(130.5,24){{\tiny $B(e,r')$}} 
		\put(202,0){{\tiny $B(e,r)$}} 
		\put(21,86){{\tiny $\cdots$}}
		\put(170,86){{\tiny $\cdots$}}
		
		\end{tikzpicture}
		\caption{If $r = \Theta ((\log n)^2)$ and $r'=\Theta (\log n)$, influence from outside of $B(e,r')$ may be easily propagated to $e$ through long boundary connections in $B(e,r')$; but to propagate influence from the exterior of $B(e,r)$, $\Omega(\log n)$ of them would have to be connected in $\L_n$.}     
		\label{figure:B(e,r)}
	\end{center}
	
\end{figure}

\noindent
For this lemma, it is crucial that $r = \Theta((\log n)^2)$, as MSM does not hold for typical boundary conditions 
for $\B_r$ when, for example, $r = \Theta(\log n)$.
This is because in a typical configuration $\omega$ on $\mathbb Z^2 \setminus \L_n$ it is likely that there exist pairs of vertices 
of $\b\L_n$ at distance $\gamma \log n$, for a suitably small constant $\gamma > 0$, that are connected in $\omega$. Thus, for some $e \in E(\L_n)$ close to $\b\L_n$, it is possible for the configuration outside of $B(e,r)$
to exert a strong influence on the state of $e$ when $r = \gamma' \log n$ with constant $\gamma' > 0$, even if $\gamma' \gg \gamma$; the presence of a constant number of open edges on (or near) $\b\L_n$ would propagate the influence from $\L_n\setminus B(e,r)$ to $e$.
Taking $r = \Omega((\log n)^2)$ avoids this issue, since, roughly speaking, $\Omega(\log n)$ open edges at specific points in $\b\L_n$ would now be required to propagate the influence from $\L_n\setminus B(e,r)$ to $e$;
see Figure \ref{figure:B(e,r)}(b). The proof of Lemma~\ref{lemma:ssm:typical} is provided in Section \ref{app-sec:msm-typical}.

The final ingredient in the proof of Theorem \ref{thm:typical-mixing:intro} is a LM estimate for $\B_r$  with $r = \Theta((\log n)^2)$. 
Such an estimate is readily provided by Theorem~\ref{thm:planar-mixing:intro}, with mixing time that is poly-logarithmic in $n$.

\begin{proof}[Proof of Theorem~\ref{thm:typical-mixing:intro}]
	Let $\alpha > 0$ be sufficiently large and let $\eta \in C_\alpha$. 
	By Lemma~\ref{lemma:ssm:typical},
	MSM holds for $\eta$ and $\B_r$ with $r = \Theta((\log n)^2)$ for some $\delta < n^{-3}$.
	Observe also that every $B(e,r) \in \mathcal B_r$ with boundary condition $(1,\eta)$ or $(0,\eta)$ 
	is a rectangle of side--length at most $r = O((\log n)^2)$ with a realizable boundary condition.
	Then by Theorem~\ref{thm:planar-mixing:intro} (see also Remark \ref{remark:mixing-in-rect}), for every $e\in E(\L_n)$ we have  
	\begin{align*}
	\max\{\tmix(B(e,r)^{0,\eta}),\tmix(B(e,r)^{1,\eta})\} \leq  (\log n)^{C}\,,
	\end{align*}
	for a suitable $C > 0$, yielding the desired LM estimate.
	The result then follows from Theorem~\ref{thm:general-mixing}.
\end{proof}

\subsection{Moderate spatial mixing for $\mathcal{C}_\alpha$}\label{app-sec:msm-typical}

We now prove Lemma \ref{lemma:ssm:typical}. The proof involves showing that if $\xi \in \mathcal C_\alpha$, when $p<p_c(q)$, the correlation between edges $e,e'\in E(\L_n)$ near the boundary decays exponentially in $d(e,e')/(\alpha\log n)$---whereas SSM would entail a decay rate that is exponential in just~$d(e,e')$.

\begin{proof}[Proof of Lemma \ref{lemma:ssm:typical}]
	Fix an edge $e\in E(\L_n)$ and for ease of notation let $B = B(e,r)\subset \L_n$ and $\pi^\eta = \pi^\eta_{\L_n,p,q}$.
	Let $(1,\eta)$ be the boundary condition induced on $B$ by $\eta$ and the event $\{E^c(B)=1\}$.
	
	Lemma \ref{lemma:ssm:reduction} implies that for every pair of configurations $\omega_1$, $\omega_2$ on $E^c(B)$, 
	\begin{equation}
		\label{eq:ssm:reduction}
	|\pi^\eta(\,e=1\mid E^c(B)=\omega_1\,)-\pi^\eta(\,e=1 \mid E^c(B)=\omega_2\,)| 
	 \le \pi^{(1,\eta)} \left(\, \{e\} \stackrel{\eta}\longleftrightarrow \b B \setminus \b\L_n \,\right), 
	\end{equation}
	where  $\{e\} \stackrel{\eta}\longleftrightarrow  \b B \setminus \b\L_n$ denotes the event that there is a path from $e$ to $\b B\setminus \b\L_n$ 
	taking into account the connections induced by $\eta$. 
	Thus, it is sufficient to bound the right-hand-side of \eqref{eq:ssm:reduction}.
	
	There are three cases corresponding to the location of $e$ in $\L_n$.
	First, if $d(\{e\},\b \L_n) > r$, then $B \cap \b \L_n = \emptyset$ and $(1,\eta)$ is just the wired boundary condition on $B$. In this case the right-hand-side of~\eqref{eq:ssm:reduction} is at most $n^{-3}$ by the EDC property; see \eqref{eq:EDC}.
	
	The second and third cases correspond to whether $B$ intersects one or two sides of $\partial \L_n$.
	For the second case, assume without loss of generality that 
	$B$ intersects $\bt\L_n$ but not $\bl\L_n$ or $\br\L_n$.
	That is,
	$d(\{e\},\b_\north \L_n) \le r$, but $\{e\}$ is at distance at least $r$ from
	$\bl\L_n$ and $\br\L_n$. 
	Let $\b_\west B$, $\b_\south B$, $\b_\east B$ be the west, south and east boundaries of $B$, all of  which are wired in $\omega$. By a union bound
	$$
	\pi^{(1,\eta)} \left( \{e\} \stackrel{\eta}\longleftrightarrow \b B \setminus \b \L_n   \right) \le   \pi^{(1,\eta)} \left( \{e\} \stackrel{\eta}{\longleftrightarrow}   \b_\west B  \right) +  \pi^{(1,\eta)} \left(\{e\} \stackrel{\eta}{\longleftrightarrow}  \b_\east B \right) +  \pi^{(1,\eta)} \left( \{e\} \stackrel{\eta}{\longleftrightarrow} \b_\south B \right) ,
	$$
	where  $\{e\} \stackrel{\eta}{\longleftrightarrow}  \b_\west B$ denotes the event that there is a path from $e$ to $\b_\west B$ in $B$, taking into account those connections inherited from $\eta$ (and ignoring the connections induced by the wired configuration on $E^c(B)$).
	Define $\{e\}\stackrel{\eta}{\longleftrightarrow}  \b_\east B$ and $\{e\} \stackrel{\eta}{\longleftrightarrow}  \b_\south B$ similarly.
	
	The event $\{e\} \stackrel{\eta}{\longleftrightarrow} \b_\south B$ implies that there exists a path of length  at least $r$, either from $\{e\}$ or from $\b\L_n$ to  $\b_\south B$. Therefore, the EDC property (\eqref{eq:EDC}) implies that for large enough $n$,
	$$
	\pi^{(1,\eta)} \left(\, \{e\} \stackrel{\eta}{\longleftrightarrow} \b_\south B  \,\right) \le \frac{1}{3n^3}\,.
	$$
	
	We bound next $\pi^{(1,\eta)} ( \{e\} \stackrel{\eta}{\longleftrightarrow} \b_\west B)$. 
	Let $\eta_0,\dots,\eta_d$ be the boundary components of $\eta$.
	Since $\eta$ is realizable, the planarity of $\Z^2$ implies that for every $i,j \in \{0,\dots,d\}$ there are only three possibilities: $L(\eta_i) \cap L(\eta_j) = \emptyset$, $L(\eta_i) \subset L(\eta_j)$ or $L(\eta_j) \subset L(\eta_i)$. 
	(Recall that $L(\eta_i) \subset \b\L_n$ is the path of minimum length that contains all the vertices in $\eta_i$.)
	Call $\eta_i$ a maximal boundary component if $\nexists j \in \{0,\dots,d\}$ such that $L(\eta_i) \subset L(\eta_j)$. The set of all maximal boundary components defines a partition for $\b\L_n$.
	Since also $\eta \in \mathcal{C}_\alpha$, we deduce that 
	there exists a sequence of edges
	$e_0=\{u_0,v_0\},e_1,\dots,e_k=\{u_k,v_k\}$ in $B \cap \b \L_n$ 
	such
	that 1) $(\gamma+\alpha) \log n \ge d(\{e_i\},\{e_{i+1}\}) \ge \gamma \log n$ for all $i=0,\dots,k-1$, where $\gamma$ is a large constant we choose later and $k \ge \frac {c_0}{2(\gamma+\alpha)} \log n$, and 2) the set $S_i = \llb v_{i},u_{i+1} \rrb \subset B \cap \b\L_n$ is a disconnecting interval.
	
	Let 
	$\mathcal{E}_i$ be the event that $S_i$ is connected to $S_{i+1}$ by a path of open edges in $B$. 
	Let $e_t$ be the closest edge in the sequence $e_0,e_1,\dots,e_k$  to $e$ and let
	$\mathcal{\hat{E}}_t= \cap_{i=0}^t \mathcal{E}_i$. Since $d(\{e\},\b_\west B)\geq r$, we also have $t\geq \frac{c_0}{8(\gamma+\alpha)} \log n$. Then,
	\begin{equation}
	\label{eq:ssm:main-conditioning}
	\pi^{(1,\eta)} \left(\, \{e\} \stackrel{\eta}{\longleftrightarrow}  \b_\west B\,\right) \le \pi^{(1,\eta)} \left(\, \{e\} \stackrel{\eta}{\longleftrightarrow}  \b_\west B \mid \mathcal{\hat{E}}^c_t \,\right) + \pi^{(1,\eta)} \left(\,\mathcal{\hat{E}}_t \,\right)\,. 
	\end{equation}
	If the event $\mathcal{\hat{E}}^c_t$ occurs, then there exists $i < t$ such that $S_i$ is not connected to $S_{i+1}$
	in $B$. This implies that there is a dual-path of length at least $\gamma \log n$ separating $S_i$ from $S_{i+1}$. Consequently, a path from $e$ to $\b_\west B$ would require
	two vertices at distance at least  $\gamma \log n$ to be connected by a path of open edges in $B$.
	By the EDC property and a union bound,  this has probability at most $1/(9n^3)$ for large enough $\gamma$. Thus,
	\begin{equation}
	\label{eq:ssm:no-rec-event}
	\pi^{(1,\eta)} \left(\, \{e\} \stackrel{\eta}{\longleftrightarrow}  \b_\west B \mid \mathcal{\hat{E}}^c_t \,\right) \le \frac{1}{9n^3}\,.
	\end{equation}
	
	We bound next $\pi^{(1,\eta)} (\mathcal{\hat{E}}_t )$. Let $ \{u_i,v_i\}$ denote the endpoints of the edge $e_i$, where $u_i$ is to the left of $v_i$ for all $i$. For $1 \le i \le t$, consider the rectangle $Q_i \subset B$ with corners at $v_{i-1}$, $u_{i+3}$ and the other two corners on $\b_\south B$. 
	Then,
	\begin{align}
	\label{eq:ssm:removing-events}
	\pi^{(1,\eta)} (\mathcal{\hat{E}}_t ) 
	&= \pi^{(1,\eta)} (\mathcal{E}_0,\dots ,\mathcal{E}_t) 
	\le \pi^{(1,\eta)} (\mathcal{E}_1,\mathcal{E}_5,\mathcal{E}_9\dots ,\mathcal{E}_\ell), 
	\end{align}
	where $t - 4 < \ell \le t$. Now,  let $\mathcal{E}'_i$ be the event that $S_i$ is connected to $S_{i+1}$ by a path completely contained in $Q_i$. We have
	\begin{align*}
	\pi^{(1,\eta)} (\mathcal{E}_1,\mathcal{E}_5,\mathcal{E}_9\dots ,\mathcal{E}_\ell) 
	&= \pi^{(1,\eta)} (\mathcal{E}'_1,\mathcal{E}_5,\mathcal{E}_9\dots ,\mathcal{E}_\ell) + \pi^{(1,\eta)} (\mathcal{E}_1\cap (\mathcal{E}'_1)^c,\mathcal{E}_5,\mathcal{E}_9\dots ,\mathcal{E}_\ell) \\
	&\le  \pi^{(1,\eta)} (\mathcal{E}'_1,\mathcal{E}_5,\mathcal{E}_9\dots ,\mathcal{E}_\ell) + \frac{1}{n^{4}},
	\end{align*}	
	where the last inequality follows from the fact that for the event $\mathcal{E}_1\cap (\mathcal{E}'_1)^c$ to occur there have to be 
	two vertices
	at distance at least $\gamma \log  n$
	connected by a path in $B$; by the EDC property this only occurs with probability at most $n^{-4}$
	for large $\gamma$. Iterating this procedure for $\mathcal{E}_5,\mathcal{E}_9,\dots$ we get
	\begin{align}
	\label{eq:ssm:rec-event}
	\pi^{(1,\eta)} (\mathcal{E}_1,\mathcal{E}_5,\mathcal{E}_9\dots ,\mathcal{E}_\ell) 
	&\le \pi^{(1,\eta)} (\mathcal{E}_1',\mathcal{E}_5',\mathcal{E}_9'\dots ,\mathcal{E}_\ell') + \frac{\ell}{n^{4}}.
	\end{align}
	Let $Q = \bigcup_{i=0}^{(\ell-1)/4} Q_{4i+1}$. Monotonicity implies that
	\begin{align*}
	\pi^{(1,\eta)} (\mathcal{E}_1',\mathcal{E}_5',\mathcal{E}_9'\dots ,\mathcal{E}_\ell')
	&\le \pi^1 (\mathcal{E}_1',\mathcal{E}_5',\mathcal{E}_9'\dots ,\mathcal{E}_\ell' \mid E^c(Q) = 1)  = \prod_{i = 0}^{\frac{\ell-1}{4}}  \pi^1 (\mathcal{E}_{4i+1}' \mid E^c(Q)=1),
	\end{align*}
	where for the last equality we use that the events $\mathcal{E}_1',\mathcal{E}_5',\dots ,\mathcal{E}_\ell'$ are independent under the wired boundary condition when also conditioning on $\{E^c(Q)=1\}$ .
	We claim that there exists a constant $\rho \in (0,1)$ (independent of $n$) such that for all $i=0,\dots,\frac{\ell-1}{4}$
	\begin{equation}
	\label{eq:local:connection}
	\pi^1 (\mathcal{E}_{4i+1}' \mid E^c(Q)=1) \le 1 - \rho\,.
	\end{equation}
	To see this, let $j=4i+1$ and note that for for large enough $D>0$, by the EDC property and a union bound imply that there is no connection between any pair of vertices $(u,v)$ with $u\in S_j$ and $v\in S_{j+1}$ and $d(u,v)\geq D$ with probability $\Omega(1)$. At the same time, by forcing an adjacent $O(D)$ edges to be closed at a cost of $e^{-O(D)}$, we see that with $\Omega(1)$ probability, in fact no other pairs $(u,v)$ with $d(u,v)\leq D$ are connected either.
	Thus, (\ref{eq:local:connection}) holds for a suitable $\rho \in (0,1)$ and so
	\begin{equation}
	\label{eq:ssm:final-event}
	\pi^{(1,\eta)} (\mathcal{E}_1',\mathcal{E}_5',\mathcal{E}_9'\dots ,\mathcal{E}_\ell') \le (1-\rho)^{\frac{\ell+3}{4}} \le (1-\rho)^{\frac{t-1}{4}} \le \frac{1}{9n^3}\,,
	\end{equation}
	where the last inequality holds for sufficiently large $c_0$ since $t \ge  \frac{c_0}{8(\gamma+\alpha)} \log n $.
	
	Putting (\ref{eq:ssm:final-event}), (\ref{eq:ssm:rec-event}), (\ref{eq:ssm:no-rec-event}), (\ref{eq:ssm:removing-events}) and (\ref{eq:ssm:main-conditioning}) together we get
	$$
	\pi^{(1,\eta)} \left(\, \{e\} \stackrel{\eta}{\longleftrightarrow} \b_\west B  \,\right) \le \frac{2}{9n^{3}} + \frac{\ell}{n^4} \le \frac{1}{3n^3}\,,
	$$
	since $\ell = O(\log n)$. Analogously, we get 
	$
	\pi^{(1,\eta)} (\{e\} \stackrel{\eta}{\longleftrightarrow} \b_\east B  ) \le \frac{1}{3n^{3}},
	$ 
	and thus
	$$
	\pi^{(1,\eta)} \left(\, \{e\} \stackrel{\eta}\longleftrightarrow\b B  \,\right) \le \frac{1}{n^{3}}\,.
	$$
	
	Finally for the third case, suppose without loss of generality that $B$ 
	intersects $\bt\L_n$ and $\bl\L_n$, but not $\partial_\east \L_n$ or $\partial_\south \L_n$.
	A union bound implies that
	\begin{equation}
	\label{eq:ssm:third-case}
	\pi^{(1,\eta)} \left(\, \{e\} \stackrel{\eta}\longleftrightarrow \b B \setminus \b \Lambda  \,\right) \le   \pi^{(1,\eta)} \left(\, \{e\} \stackrel{\eta}{\longleftrightarrow}  \b_\south B  \,\right) +  \pi^{(1,\eta)} \left(\, \{e\} \stackrel{\eta}{\longleftrightarrow}\b_\east B \,\right),
	\end{equation}
	and each term in the right-hand side of (\ref{eq:ssm:third-case}) can bounded in the same way as $\pi^{(1,\xi)} (\, \{e\} \stackrel{\eta}{\longleftrightarrow} \b_\west B  \,)$ in the second case; thus, the result follows.
\end{proof}

\section{Slow mixing under worst-case boundary conditions}
\label{section:lb:intro}

In this section we show that there are (non-realizable) boundary conditions 
for the graph $(\L_n,E(\L_n))$ 
for which 
the FK-dynamics 
requires exponentially many steps to converge to stationarity.
In particular, we prove Theorem \ref{thm:lb:intro} from the introduction.

Theorem \ref{thm:lb:intro} is a corollary of a more general theorem we establish. This general theorem
enables the transferring of mixing time lower bounds for the FK-dynamics on arbitrary graphs 
to mixing time lower bounds for the FK-dynamics on $\L_n$, for suitably chosen boundary conditions.
The high level idea is that any  graph $G$ with fewer than $\frac n4$ edges can be ``embedded'' into a subset $L$ of the
boundary $\bt\L_n$ of $\L_n$ as a boundary condition we shall denote $\xi(G)$.
When $p$ is sufficiently small, the effect of the configuration on $\L_n\setminus L$ becomes negligible, and so the mixing time of the FK-dynamics on $\L_n$ with boundary condition $\xi(G)$ is primarily dictated by its restriction to the embedded graph~$G$.

We show first how to embed any graph $G=(V_G,E_G)$  into  a subset $L\subset \partial_\north \L_n$. For $m\leq \lfloor n/4\rfloor$, let
$$
L= L(m) = \{\llb 4i, 4i+1 \rrb:i=0,...,m-1\}\times \{n\} \subset \partial_\north\L_n
$$ 
with edge set $E(L)$ consisting of all edges in $E(\L_n)$ connecting vertices in $L$. 
\begin{definition}
	\label{def:embedding}
	Let $G=(V_G,E_G)$ be a graph with $|E_G| = m$ for $m\leq \lfloor n/4\rfloor$
	and let $L$ be
	as above.  
	We say a function $\phi:L \rightarrow V_G$
	is an embedding of $G$ into $(L,E(L))$ if	
	for every $\{u,v\} \in E_G$
	there exists a unique pair $x \in \phi^{-1}(u) \subseteq L$ and $y \in \phi^{-1}(v)  \subseteq L$,
	where $\phi^{-1}(u)$ and $\phi^{-1}(v)$
	denotes	the pre-image sets for $u$ and $v$ respectively,
	such that $\{x,y\} \in E(\L_{n})$.
\end{definition}

\noindent
Notice that every graph $G$ on $m \le \lfloor n/4\rfloor$ edges can be embedded into $L$ by  identifying each edge in $E_G$ with an edge in $E(L)$.

\begin{fact}\label{fact:encoding-bc}
	For every graph $G=(V_G,E_G)$ with $m\leq \lfloor \frac n4\rfloor$ edges, there exists an embedding
	of $G$ into~$(L,E(L))$.
\end{fact}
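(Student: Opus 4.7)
The plan is to construct the embedding $\phi$ explicitly by building a bijection between the $m$ edges of $E(L)$ and the $m$ edges of $E_G$. First I would fix an arbitrary enumeration $e_0,e_1,\ldots,e_{m-1}$ of $E_G$ with $e_i = \{u_i, v_i\}$ (the labeling of endpoints within each edge is chosen arbitrarily), and then define
\[
\phi(4i, n) := u_i, \qquad \phi(4i+1, n) := v_i, \qquad i = 0, 1, \ldots, m-1.
\]
This is well-defined on all of $L$ because, by construction, $L = \bigcup_{i=0}^{m-1}\{(4i,n),(4i+1,n)\}$. Note $\phi$ is not required to be injective; preimage sets $\phi^{-1}(u)$ simply collect all $(4i, n)$ or $(4i+1,n)$ landing on $u$, which has size equal to the degree of $u$ in $G$. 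Since $m \le \lfloor n/4 \rfloor$, the maximum $x$-coordinate used is $4(m-1)+1 \le n-3$, so $L \subset \partial_\north \L_n$ as required.

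Next I would verify the embedding property from Definition~\ref{def:embedding}. Existence is immediate: given $\{u,v\} \in E_G$, say $\{u,v\} = e_i$, the pair $x = (4i,n), y = (4i+1,n)$ satisfies $x \in \phi^{-1}(u)$, $y \in \phi^{-1}(v)$, and $\{x,y\} \in E(\L_n)$. The heart of the argument is uniqueness, and here is where the factor of $4$ in the definition of $L$ plays its role: the only nearest-neighbor edges of $\Z^2$ with both endpoints in $L$ are the $m$ edges $\{(4i,n),(4i+1,n)\}$, since any two vertices in $L$ coming from different index blocks differ by at least $3$ in the $x$-coordinate and hence cannot be adjacent in $\Z^2$. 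Consequently, any candidate pair $(x,y)$ with $x \in \phi^{-1}(u)$, $y \in \phi^{-1}(v)$, and $\{x,y\} \in E(\L_n)$ is forced to have the form $x=(4j,n)$, $y=(4j+1,n)$ for a common $j$; this in turn forces $\{u,v\} = \{u_j, v_j\} = e_j$, and since the enumeration of $E_G$ has distinct edges, $j = i$ is the unique such index.

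There is no real obstacle in this fact; it is a purely combinatorial unpacking of the definition. The only substantive observation is the spacing choice in $L$: the gap of $3$ between consecutive blocks is exactly what prevents ``accidental'' adjacencies in $E(\L_n)$ between vertices belonging to different pairs $\{(4i,n),(4i+1,n)\}$, and it is precisely this absence of extra adjacencies that promotes the natural bijection $E(L)\leftrightarrow E_G$ into an embedding in the sense of Definition~\ref{def:embedding}.
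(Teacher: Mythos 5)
Your proof is correct and takes essentially the same approach as the paper, which simply states without proof that every graph on $m \le \lfloor n/4\rfloor$ edges can be embedded by identifying each edge of $E_G$ with an edge of $E(L)$. You supply the details the paper omits, in particular the observation that the gap of $3$ between consecutive blocks $\{(4i,n),(4i+1,n)\}$ rules out spurious adjacencies and hence gives the uniqueness clause of Definition~\ref{def:embedding}.
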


\noindent
Now let
$\xi(G)$ be the boundary condition on $\b \L_n$ defined by the partition:
$$
\{\{v\}: v \in \b\L_n\setminus L\} \cup \{\phi^{-1}(v): v \in V_G\}\,.
$$
In words, $\xi(G)$ is the boundary condition that is free everywhere except in the vertices of $L$ and where all the vertices in $L$ that are mapped by $\phi$ to the same vertex of $G$ are wired in $\xi(G)$.
We are now ready to state our main comparison result from which Theorem~\ref{thm:lb:intro} follows straightforwardly.

\begin{theorem}\label{thm:lb:general}
	Let $G=(V_G,E_G)$ be a graph and suppose there exist
	$q > 2$ and 
	$p = \lambda {|E_G|}^{-\alpha}$ with $\lambda >0$, $\alpha > 1/3$ 
	such that 
	$
	\gap(G) \le \exp(-\Omega(|V_G|)).
	$
	Then, as long as $n \ge 4 |E_G| \ge \varepsilon n$ for some $\varepsilon > 0$, with the same choice of $p$ and $q$, we have
	$$
	\gap(\L_{n}^{\xi(G)}) \le  {\e}^{-\Omega(|V_G|)}\,.
	$$
\end{theorem}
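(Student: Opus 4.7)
The strategy is to transfer a bottleneck witnessing the slow mixing of the FK-dynamics on $G$ to a bottleneck in $\L_n^{\xi(G)}$ and apply Cheeger's inequality. The guiding intuition is that for $p = \lambda |E_G|^{-\alpha}$ with $\alpha > 1/3$, the bulk configuration $\tau := \omega|_{E(\L_n)\setminus E(L)}$ is so sparse that with high probability it fails to connect distinct super-vertices $\phi^{-1}(v)$ through open paths; conditional on this, the FK-dynamics restricted to $E(L)$ coincides with the FK-dynamics on $G$.

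Concretely, Cheeger's inequality applied to $\gap(G) \leq \exp(-\Omega(|V_G|))$ produces a set $A_G \subset \{0,1\}^{E_G}$ with $\pi_G(A_G) \leq 1/2$ and $Q_G(A_G, A_G^c) \leq \pi_G(A_G)\exp(-\Omega(|V_G|))$. Letting $\omega_G \in \{0,1\}^{E_G}$ denote the $G$-configuration obtained from $\omega|_{E(L)}$ via the edge-bijection induced by $\phi$, I would lift this to $\tilde A = \{\omega \in \{0,1\}^{E(\L_n)} : \omega_G \in A_G\}$ and introduce the event
$$
\mathcal{E}' = \bigcap_{v \neq v' \in V_G}\bigl\{\text{no open $\tau$-path from $\phi^{-1}(v)$ to $\phi^{-1}(v')$}\bigr\}.
$$
On $\mathcal{E}'$, every component of $\omega \cup \xi(G)$ either lies inside a single super-vertex cluster (read off from $\omega_G$) or is disjoint from $L$, so the component count decomposes as $c(\omega;\xi(G)) = c_G(\omega_G) + F(\tau)$ with $F$ depending only on $\tau$. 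This forces $\mu := \pi^{\xi(G)}_{\L_n,p,q}$ to factorize when conditioned on $\mathcal{E}'$, with $\omega_G$-marginal equal to $\pi_G$. Moreover, the standard stochastic domination $\mu \preceq \mathrm{Bernoulli}(p)$ (valid for $q \geq 1$), combined with a union bound over paths of length $k \geq 3$ between distinct super-vertices (the geometry of $L$ forces the shortest bridging path to have length $3$), yields $\mu(\mathcal{E}'^c) = O(|E_G|(4p)^3) = O(|E_G|^{1-3\alpha}) = o(1)$, crucially using $\alpha > 1/3$.

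The flux $Q(\tilde A, \tilde A^c)$ only receives contributions from transitions that flip an edge $e \in E(L)$, since updates elsewhere preserve $\omega_G$. On $\mathcal{E}'$, the central observation is that $e$ is a cut-edge in the augmented graph on $\L_n$ if and only if the corresponding $f = \phi(e) \in E_G$ is a cut-edge in $G$; hence the heat-bath update at $e$ has exactly the probabilities of the update at $f$ under $\pi_G$. Summing gives
$$
Q(\tilde A, \tilde A^c)\big|_{\mathcal{E}'} \leq \frac{|E_G|}{|E(\L_n)|}\, Q_G(A_G, A_G^c)\,,
$$
and combining with a matching bound for the $\mathcal{E}'^c$-contribution together with $\mu(\tilde A) \geq \pi_G(A_G)\mu(\mathcal{E}')$, Cheeger's inequality then produces $\gap(\L_n^{\xi(G)}) \leq 2Q(\tilde A,\tilde A^c)/\mu(\tilde A) \leq \exp(-\Omega(|V_G|))$.

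The main obstacle is controlling the $\mathcal{E}'^c$-contribution at the required exponential rate. The naive trivial bound $Q|_{\mathcal{E}'^c} \leq (|E(L)|/|E(\L_n)|)\,\mu(\mathcal{E}'^c) = O(n^{-3\alpha})$ only decays polynomially in $n$, and is insufficient whenever $|V_G| = \omega(\log n)$, which is the regime of interest (in the mean-field application to Theorem~\ref{thm:lb:intro}, $|V_G| \sim |E_G|^{1/2}$). A finer analysis must exploit that even when a few super-vertices are bridged through $\tau$, the effective random-cluster measure on $E(L)$ is only a small perturbation of $\pi_G$, so the bottleneck $A_G$ remains intact and the crossing rate stays exponentially small in $|V_G|$; this robustness argument, together with the quantitative bound $\mu(\mathcal{E}'^c) = O(|E_G|^{1-3\alpha})$ granted by $\alpha > 1/3$, is where the delicate part of the proof lies.
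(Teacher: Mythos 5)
Your high-level picture is right: you transfer the bottleneck $A_G$ for the $G$-dynamics to a bottleneck for $\L_n^{\xi(G)}$ by exploiting the rarity of bridges between super-vertices at $p = \lambda|E_G|^{-\alpha}$. However, as you yourself diagnose in the final paragraph, the argument has a genuine gap, and it is not merely a technicality to be filled in: the $\mathcal{E}'^c$-contribution to the flux is bounded by $O(|E_G|^{1-3\alpha}) = O(n^{\Theta(1-3\alpha)})$, which is only polynomial in $n$, while the target is $\exp(-\Omega(|V_G|))$. In the intended application (Theorem~\ref{thm:lb:intro} via the mean-field complete graph $K_\ell$ with $\ell = \Theta(n^\alpha)$) one has $|V_G| = \Theta(n^\alpha) = \omega(\log n)$, so the polynomial bound is strictly too weak. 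A bottleneck set $\tilde A$ defined purely through $\omega_G \in A_G$, with a binary dichotomy ``no bridges / some bridges,'' cannot close this gap.

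The paper's fix differs from yours in three linked ways. First, it replaces the all-or-nothing event $\mathcal{E}'$ by a graded notion: $\mathcal{R}^{\xi(G)}(M)$ is the set of configurations on $E^c(L)$ that bridge at most $M$ boundary components, and the bottleneck set is $A_M = \{\omega : \omega(E(L)) \in S_\star,\ \omega(E^c(L)) \in \mathcal{R}^{\xi(G)}(M)\}$ with $M = \delta|V_G|$. Second, to control the distortion of the $E(L)$-marginal when there are up to $M$ bridges, it invokes the boundary-condition comparison bound (Lemma~\ref{lemma:simple-rc-bound}), which shows the conditional law of $\omega(E(L))$ is within a multiplicative factor $q^{O(M)}$ of $\pi_G$; since $M = \delta|V_G|$ with $\delta$ small, this factor is absorbed by the $e^{-\Omega(|V_G|)}$ conductance bound inherited from $S_\star$. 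Third, it proves a sharp tail bound (Lemma~\ref{lemma:lb:gadget-tail:sketch}): exceeding $M$ bridges has probability $e^{-\Omega(M\log n)}$, exponentially small in $|V_G|$, rather than $o(1)$. This is exactly the ``finer analysis'' you gesture at but do not carry out. A further structural difference: the paper first passes from the FK-dynamics to the modified heat-bath (MHB) dynamics, which resamples all of $E^c(L)$ in a single block move; this makes transitions across $\partial \mathcal{R}^{\xi(G)}(M)$ easy to control via the tail bound, whereas with the single-edge FK-dynamics one would need a separate argument for the single-edge flips on $E^c(L)$ that cross that boundary. So the proposal correctly identifies the strategy and the obstruction, but is missing the quantitative machinery (the graded bottleneck, the $q^{O(M)}$ measure comparison, the $e^{-\Omega(M\log n)}$ tail estimate, and the MHB reduction) that the paper uses to make the argument close.
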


\begin{proof}[Proof of Theorem~\ref{thm:lb:intro}]
	It was established in \cite{GLP} that
	for every $q > 2$ and every $\ell$ sufficiently large, there exists an interval
	$(\lambda_s(q), \lambda_S(q))$
	such that if $p' = \lambda'/\ell$ with $\lambda' \in (\lambda_s(q), \lambda_S(q))$, then the spectral gap at parameters $(p',q)$ satisfies
	$$
	\gap(K_\ell) \leq \exp(-\Omega(\ell))\,,
	$$ 
	where $K_\ell$ denotes the complete graph on $\ell$ vertices.
	Therefore, for a fixed $p = \lambda n^{-\alpha}$ there exists a choice of $\ell = \Theta(n^{\alpha})$ such that  at parameters $(p,q)$, $\gap(K_\ell) \leq \exp(-\Omega(\ell))$.
	Since the number of edges in $K_\ell$ is $\Theta(n^{2\alpha})$, the result follows from Theorem~\ref{thm:lb:general} and (\ref{eq:prelim:gap}).
\end{proof}

\subsection{Main comparison inequality: proof of Theorem~\ref{thm:lb:general}}

We now turn to the proof of Theorem~\ref{thm:lb:general}.
A standard tool for bounding spectral gaps is construction of bottleneck sets with small conductance; see Section~\ref{subsection:prelim:dynamics}.
  It will be easier to do so for the following \emph{modified heat-bath} (MHB) dynamics, allowing us to isolate moves on $E(L)$, where we have embedded $G$, from those in $E^c(L)=E(\L_n) \setminus E(L)$.

\begin{definition}
	\label{def:MHB}
	Given an FK configuration $X_{t}$, one step of the MHB chain is given by:
	\begin{enumerate}
		\item Pick $e \in E(\L_n)$ uniformly at random;
		\item If both endpoints of $e$ lie in $L$, then perform a heat-bath update on $e$. That is, replace the configuration in $e$ with a sample from $\pi_{\L_n,p,q}^\xi(\cdot \mid X_t(E(\L_n) \setminus \{e\}))$;
		\item Otherwise, replace the configuration in $E(\L_n) \setminus E(L)$ with a sample from $\pi_{\L_n,p,q}^\xi(\cdot \mid X_t(E(L)))$.
	\end{enumerate}
\end{definition} 

\noindent
The MHB chain is clearly reversible with respect to $\pi_{\L_n,p,q}^\xi$.

Let $\gap_{\MHB}(\L_n^{\xi})$ denote the spectral gap of the MHB dynamics on $\L_n$ with 
boundary condition $\xi$ and
parameters $p$ and $q$. 
The following comparison inequality allows us to focus on finding upper bounds for the spectral gap of the MHB dynamics; its proof is deferred to Section~\ref{subsec:proof-auxiliary}.

\begin{lemma}
	\label{lemma:gap-comparison:sketch}
	For all $p \in (0,1)$, $q > 0$, $n \in \N$ and boundary condition $\xi$ for $\L_n$, we have
	$$
	\gap (\L_n^\xi)\leq \gap_{\MHB}(\L_n^\xi) \,.
	$$
\end{lemma}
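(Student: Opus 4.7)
The plan is to prove the inequality via a termwise comparison of the Dirichlet forms of the two chains, invoking the standard variational characterization
\[
\gap(P) \;=\; \inf_{f:\Var_\pi(f)>0} \frac{\mathcal D_P(f)}{\Var_\pi(f)}, \qquad \mathcal D_P(f) \;:=\; \tfrac12 \sum_{\omega,\omega'} \pi(\omega)\,P(\omega,\omega')\,(f(\omega)-f(\omega'))^2,
\]
with $\pi = \pi_{\L_n,p,q}^\xi$. Since $\gap(P)$ is monotone in $\mathcal D_P$, it suffices to show that $\mathcal D_{\mathrm{FK}}(f) \le \mathcal D_{\MHB}(f)$ for every $f$.

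Both dynamics are mixtures, over a uniformly chosen edge $e \in E(\L_n)$, of heat-bath block updates. A standard calculation (using that each heat-bath rule is reversible with respect to $\pi$) identifies the Dirichlet form of the heat-bath update on an edge-set $A \subseteq E(\L_n)$ with the expected conditional variance
\[
\mathcal V_A(f) \;:=\; \E_\pi\!\left[\,\Var_{\pi(\cdot\,\mid\, \omega(E(\L_n)\setminus A))}(f)\,\right].
\]
Decomposing each chain's transitions over the choice of $e$, and noting that step~3 of MHB is triggered precisely when the sampled edge lies in $E^c(L)$, I obtain
\begin{align*}
\mathcal D_{\mathrm{FK}}(f) &= \frac{1}{|E(\L_n)|}\sum_{e \in E(\L_n)} \mathcal V_{\{e\}}(f),\\
\mathcal D_{\MHB}(f) &= \frac{1}{|E(\L_n)|}\left[\sum_{e \in E(L)} \mathcal V_{\{e\}}(f) \;+\; \sum_{e \in E^c(L)} \mathcal V_{E^c(L)}(f)\right].
\end{align*}
The contributions from $e \in E(L)$ coincide, so the desired inequality reduces to showing $\mathcal V_{\{e\}}(f) \le \mathcal V_{E^c(L)}(f)$ for every $e \in E^c(L)$.

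This is the classical block-monotonicity of heat-bath Dirichlet forms. For $e \in E^c(L)$, we have $E(L) \subseteq E(\L_n)\setminus\{e\}$, so the $\sigma$-algebra generated by $\omega(E(\L_n)\setminus\{e\})$ refines the one generated by $\omega(E(L))$. The law of total variance, applied conditionally on $\omega(E(L))$, gives
\[
\Var_\pi\!\left(f \mid \omega(E(L))\right) \;=\; \E_\pi\!\left[\Var_\pi(f \mid \omega(E(\L_n)\setminus\{e\})) \mid \omega(E(L))\right] \;+\; \Var_\pi\!\left(\E_\pi[f \mid \omega(E(\L_n)\setminus\{e\})] \mid \omega(E(L))\right),
\]
and taking expectations and discarding the nonnegative second term yields $\mathcal V_{E^c(L)}(f) \ge \mathcal V_{\{e\}}(f)$, as needed.

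The argument is essentially bookkeeping: the only step with any content is the block-monotonicity comparison above, which is immediate from the law of total variance. The two remaining points that require care are the identification of single-edge and block heat-bath Dirichlet forms with the functionals $\mathcal V_A(f)$, and the correct weighting $|E^c(L)|/|E(\L_n)|$ of the block-update contribution in $\mathcal D_{\MHB}$, which arises because all of the $|E^c(L)|$ choices of edge $e \in E^c(L)$ trigger the same block heat-bath on $E^c(L)$ rather than $|E^c(L)|$ distinct single-edge updates.
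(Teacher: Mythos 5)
Your proof is correct and follows essentially the same route as the paper's: both reduce to the pointwise Dirichlet form comparison $\mathcal D_{\mathrm{FK}}(f) \le \mathcal D_{\MHB}(f)$, and your law-of-total-variance step is the probabilistic restatement of the paper's operator identity $P_A = P_e\,P_A\,P_e$ (for $e\in A$) followed by the contraction bound $\inner{P_e f}{P_A P_e f}{\pi}\le \inner{P_e f}{P_e f}{\pi}$. One small point worth making explicit: the paper's $\gap$ is the \emph{absolute} spectral gap, so to pass from the Dirichlet form inequality to the stated gap inequality one also uses that $\PMHB$ is positive semidefinite (an average of conditional-expectation projections), which the paper states explicitly and your argument uses implicitly when invoking the variational characterization.
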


With this in hand, we are now ready to prove Theorem~\ref{thm:lb:general}.

\begin{proof}[Proof of Theorem~\ref{thm:lb:general}] 

Recall from~\eqref{eq:prelim:conductance} that since, by assumption, the FK-dynamics on $G$ has $\gap (G)\leq \exp(-\Omega(|V_G|))$, there must exist  $S_\star \subset \Omega_G$ (the set of FK configurations on $G$) with $\pi_G(S_\star)\leq \frac 12$ such that 
\begin{align}\label{eq:s-star-conductance:sketch}
\Phi(S_\star) = \frac {Q_G(S_\star, S_\star^c)}{\pi_G(S_\star)} \leq {\e}^{-\Omega(|V_G|)}\,. 
\end{align}
Here $Q_G$ is the edge measure \eqref{eq:prelim:conductance-def} of the FK-dynamics on $G$
and $\pi_G = \pi_{G,p,q}$ denotes the random-cluster measure on $G$.
We will construct from this set $S_\star$, a set $A_\star \subset \Omega$, such that :
	\begin{align}
	\label{eq:lb:conductance}
	\Phi(A_\star) = \frac{\QMHB(A_\star, A_\star^c)} {\pi^{\xi(G)}({A_\star})} \leq {\e}^{-\Omega(|V_G|)}\,, \qquad \mbox{and}\qquad \Phi(A_\star^c) =\frac{\QMHB(A_\star, A_\star^c)} {\pi^{\xi(G)}({A_\star^c})} \leq {\e}^{-\Omega(|V_G|)}\,,
	\end{align}
where $\QMHB$ denotes the edge measure \eqref{eq:prelim:conductance-def} of the MHB dynamics on $\L_n^{\xi(G)}$ and $\pi^{\xi(G)} = \pi^{\xi(G)}_{\L_n,p,q}$. This implies Theorem~\ref{thm:lb:general} by combining it with~\eqref{eq:prelim:gap} and~\eqref{eq:prelim:conductance}.

Let $\{\xi_1,\dots,\xi_k\}$ be the partition of $L$ induced by $\xi(G)$.
For an FK configuration $\omega$ on $E^c(L)$,
we say that $\xi_i \stackrel{\omega}\longleftrightarrow \xi_j$
if there is an open path in $\omega$ from a vertex in $\xi_i$ to a vertex in $\xi_j$.
Let $\mathcal{S}^{\xi(G)}(\omega)$ be the set
$$
\mathcal{S}^{\xi(G)}(\omega) = \{\xi_i \in \xi(G):\, \xi_i \stackrel{\omega}{\longleftrightarrow} \xi_j\text{  for some } j \neq i,~j \in \{1,\dots,k\}\}\,,
$$
i.e., those $\xi_i$ that are connected to some $\xi_j$ in $\omega$. For $M \ge 0$, let
\begin{equation*}
	\label{eq:def:r}
	\mathcal{R}^{\xi(G)}(M) = \{\omega \in \{0,1\}^{E^c(L)}: |\mathcal{S}^{\xi(G)}(\omega)| \leq  M \}\,.
\end{equation*}
In words, $\mathcal{R}^{\xi(G)}(M)$ is the set of FK configurations on 
$E(\L_n)\setminus E(L)$ that connect at most $M$ elements of the partition $\{\xi_1,\dots,\xi_k\}$ of the vertex set $L$.

Observe that any configuration $\theta$ on $E(L)$ corresponds to a configuration on $E_G$.
Namely, if $\theta(\{u,v\}) = 1$, then the edge $\{\phi(u),\phi(v)\}$ is open in the configuration on $G$, where 
$\phi$ is the embedding of $G$ into $L$.
With a slight abuse of notation we may use $\theta$ also
for the corresponding configuration on
$\Omega_G$.
With this convention, let
\begin{align}\label{eq:A-M:sketch}
	A_{M} = \{\omega \in \Omega: \omega(E(L)) \in S_\star \,,\, \omega(E^c(L)) \in \mathcal{R}^{\xi(G)}(M)\}\,.
\end{align}  

	We show that if $M=\delta |V_G|$ for some $\delta > 0$ sufficiently small, and $n$ is taken to be large enough, we can take $A_\star = A_M$. This will follow from the following two claims. 
		
	\begin{claim}   
		\label{claim:am-prob-facts:sketch} The following are true of $S_\star$ and $A_M$ defined above:  
		\begin{enumerate}
			\item[(i)] $\pi^{\xi(G)}(A_M) \ge q^{-M}({1-{\e}^{-\Omega (M)}}) \pi_G(S_\star)$\,;
			\item[(ii)] $\pi^{\xi(G)} (A_M^c) \ge {\e}^{-O(M)}$\,.
		\end{enumerate}
	\end{claim}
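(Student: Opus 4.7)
The plan is to condition on $\eta := \omega(E^c(L))$ to separate the two marginals, identify the conditional law of $\omega(E(L))$ in terms of a random-cluster measure on a contracted copy of $G$, and then combine a measure-comparison step (producing the factor $q^{-M}$) with a Peierls-type concentration estimate for the event $\{\eta \in \mathcal{R}^{\xi(G)}(M)\}$ (producing the factor $1 - e^{-\Omega(M)}$). Part~(ii) will then follow by an essentially identical argument applied to $S_\star^c$.

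\emph{Conditional measure and measure comparison.} By the domain Markov property of the random-cluster model, the conditional law of $\omega(E(L))$ given $\omega(E^c(L)) = \eta$ is proportional to $p^{|\theta|}(1-p)^{|E(L)|-|\theta|}\,q^{c(\theta;\mathcal{P}_\eta)}$, where $\mathcal{P}_\eta$ is the partition of $L$ obtained by refining $\xi(G)|_L$ with the connections that $\eta$ makes among vertices of $L$. Via the embedding $\phi\colon L \to V_G$, this conditional distribution coincides with $\pi_{G_\eta,p,q}$, where $G_\eta$ is obtained from $G$ by identifying $u,v \in V_G$ whenever $\phi^{-1}(u) \stackrel{\eta}{\longleftrightarrow} \phi^{-1}(v)$. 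For $\eta \in \mathcal{R}^{\xi(G)}(M)$, at most $M$ vertices of $V_G$ are so identified, hence $0 \ge c_{G_\eta}(\theta) - c_G(\theta) \ge -M$ for every $\theta$. A direct computation (or Lemma~\ref{lemma:simple-rc-bound}) then yields $\pi_{G_\eta}(\theta) \ge q^{-M}\pi_G(\theta)$ pointwise, and summing over $\theta \in S_\star$ gives $\pi_{G_\eta}(S_\star) \ge q^{-M}\pi_G(S_\star)$.

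\emph{Peierls concentration.} The goal is to show $\pi_{\L_n,p,q}^{\xi(G)}(\omega(E^c(L)) \notin \mathcal{R}^{\xi(G)}(M)) \le e^{-\Omega(M)}$. Since $q \ge 1$, the marginal of $\omega(E^c(L))$ is stochastically dominated, uniformly in $\omega(E(L))$, by Bernoulli$(p)$ percolation. The crucial geometric feature of the construction is the spacing of $4$ between consecutive endpoint pairs in $L$: every open path in $E^c(L)$ joining distinct classes $\xi_i, \xi_j$ must traverse at least three edges. Since $p = \lambda m^{-\alpha}$ with $\alpha > 1/3$, the expected number of minimal cross-class paths is $O(mp^3) = O(m^{1 - 3\alpha}) = o(1)$; by associating to each $\xi_i \in \mathcal{S}^{\xi(G)}(\omega)$ the first three edges of a cross-class witness rooted at $\xi_i$, and noting that distinct classes yield essentially edge-disjoint witnesses, a standard BK/Peierls counting argument upgrades this first-moment bound to the claimed exponential concentration. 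Combining with the tower property proves~(i):
\[
\pi^{\xi(G)}(A_M) = \sum_{\eta \in \mathcal{R}^{\xi(G)}(M)} \pi^{\xi(G)}(\omega(E^c(L)) = \eta)\,\pi_{G_\eta}(S_\star) \ge q^{-M}(1 - e^{-\Omega(M)})\pi_G(S_\star).
\]
Part~(ii) then follows from the inclusion $A_M^c \supseteq \{\omega(E(L)) \in S_\star^c,\;\omega(E^c(L)) \in \mathcal{R}^{\xi(G)}(M)\}$, running the same argument with $S_\star^c$ in place of $S_\star$, and using $\pi_G(S_\star^c) \ge 1/2$ to conclude $\pi^{\xi(G)}(A_M^c) \ge q^{-M}(1 - e^{-\Omega(M)})/2 = e^{-O(M)}$.

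The main obstacle will be the Peierls concentration step: first-moment bounds alone fall short of the exponential tail $e^{-\Omega(M)}$ required. What makes the exponential estimate feasible is the interplay between the $4$-spacing in $L$ (forcing every cross-class path to have length at least three) and the assumption $\alpha > 1/3$ (which places us sufficiently deep in the subcritical regime), together ensuring that $M$ distinct classes in $\mathcal{S}^{\xi(G)}$ require $\Omega(M)$ essentially disjoint triples of open edges, each costing a factor $O(p^3)$.
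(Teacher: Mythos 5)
Your proposal is correct and matches the paper's proof: condition on $\omega(E^c(L))$, compare the resulting conditional law of $\omega(E(L))$ to $\pi_G$ to extract the $q^{-M}$ factor, and apply a concentration estimate for the event $\{\omega(E^c(L)) \in \mathcal{R}^{\xi(G)}(M)\}$, then handle part~(ii) by the same reasoning with $S_\star^c$ and $\pi_G(S_\star^c)\ge 1/2$. The paper simply packages your ``Peierls concentration'' step as a separate lemma (Lemma~\ref{lemma:lb:gadget-tail:sketch}), proved via stochastic domination by Bernoulli$(p)$ percolation and a Chernoff--Hoeffding bound exploiting exactly the $4$-spacing and $\alpha>1/3$ you identify.
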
	
	
	\begin{claim}
		\label{claim:am-edge-messaure:sketch} The modified heat-bath dynamics satisfies
		$$
		\QMHB(A_M,A_M^c) \le \pi^{\xi(G)}(A_M)  {\e}^{-\Omega(M \log n)} + \frac{q^{2M+1}}{p} Q_G(S_\star,S_\star^c)\,.
		$$
	\end{claim}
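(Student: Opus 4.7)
The plan is to split the edge measure $\QMHB(A_M,A_M^c)$ according to the two types of transitions of the MHB chain. Recall that in one step, MHB either (I) performs a single-edge heat-bath update on some $e\in E(L)$, or (II) resamples the entire configuration on $E^c(L)$ given $X_t(E(L))$. Correspondingly, for $\omega\in A_M$, $\omega'\in A_M^c$ with $P_{\MHB}(\omega,\omega')>0$ we must be in one of the following cases:  (I) $\omega(E^c(L))=\omega'(E^c(L))\in\mathcal R^{\xi(G)}(M)$ and $\omega(E(L))\in S_\star$, $\omega'(E(L))\in S_\star^c$ differ on a single edge $e\in E(L)$; or (II) $\omega(E(L))=\omega'(E(L))\in S_\star$, $\omega(E^c(L))\in \mathcal R^{\xi(G)}(M)$, $\omega'(E^c(L))\notin\mathcal R^{\xi(G)}(M)$. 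Write $\QMHB(A_M,A_M^c)=Q_{\rm ext}+Q_{\rm int}$ accordingly; I will bound $Q_{\rm ext}$ by the first term on the right-hand side and $Q_{\rm int}$ by the second.

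\textbf{Bounding $Q_{\rm ext}$.} For $\omega\in A_M$, summing $P_{\MHB}(\omega,\omega')$ over the Type-(II) destinations $\omega'\in A_M^c$ gives exactly $\tfrac{|E^c(L)|}{|E(\L_n)|}\cdot\pi^{\xi(G)}\!\big(E^c(L)\notin\mathcal R^{\xi(G)}(M)\mid \omega(E(L))\big)$. It therefore suffices to show that, uniformly over $\omega(E(L))$,
\[
\pi^{\xi(G)}\!\big(|\mathcal S^{\xi(G)}(\omega')|>M\mid \omega(E(L))\big)\;\le\; e^{-\Omega(M\log n)}.
\]
For this I will use the pointwise heat-bath inequality $\pi(e=1\mid\omega_{E\setminus\{e\}})\le p$, valid for $q\ge1$, which iterates to $\pi(F\subseteq\omega)\le p^{|F|}$ for any edge set $F$. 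By the placement of $L\subset\b_\north\L_n$, the graph distance in $E^c(L)$ between any two distinct $\xi_i,\xi_j$ is at least $3$, so any configuration witnessing $|\mathcal S^{\xi(G)}|>M$ contains a spanning forest (of the induced connection graph on the $\xi_i$'s) made of at least $M/2$ disjoint open paths, totaling at least $3M/2$ open edges in $E^c(L)$. Enumerating witnesses by their shape and using $p=\lambda|E_G|^{-\alpha}=O(n^{-\alpha})$ yields a bound of the form $(Cn^{1-3\alpha})^{M/2}$ up to polynomial combinatorial factors, which is $e^{-\Omega(M\log n)}$ precisely because $\alpha>1/3$. Summing over $\omega\in A_M$ then gives $Q_{\rm ext}\le \pi^{\xi(G)}(A_M)\,e^{-\Omega(M\log n)}$.

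\textbf{Bounding $Q_{\rm int}$ by comparison to $Q_G$.} Using the embedding $\phi$, identify $\omega(E(L))$ with a configuration $\theta$ on $E_G$; in particular $|E(L)|=|E_G|$. For any fixed $\omega^c\in\mathcal R^{\xi(G)}(M)$, the boundary condition on $E(L)$ obtained by combining $\xi(G)$ with the wirings induced by $\omega^c$ corresponds to a partition $\rho(\omega^c)$ of $V_G$ that is a coarsening of the trivial partition by at most $M$ merges. Hence the component count $c_\rho(\theta)$ in $G$ under $\rho(\omega^c)$ satisfies $c(\theta)-M\le c_\rho(\theta)\le c(\theta)$, and comparing the partition functions of $\pi^{\xi(G)}(\,\cdot\mid\omega^c)$ and $\pi_G$ gives $Z_G/Z_{\rho}\in[1,q^M]$, so $\pi^{\xi(G)}(\theta\mid\omega^c)\in [q^{-M},q^M]\pi_G(\theta)$ uniformly in $\theta$. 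Plugging this into the heat-bath identity $\pi(\theta)P(\theta,\theta')=\tfrac{1}{|E|}\cdot\tfrac{\pi(\theta)\pi(\theta')}{\pi(\theta)+\pi(\theta')}$ yields the pointwise comparison
\[
\pi^{\xi(G)}(\omega)P_{\MHB}(\omega,\omega')\;\le\;\tfrac{|E_G|}{|E(\L_n)|}\,q^{O(M)}\,\pi^{\xi(G)}(\omega^c)\,\pi_G(\theta)P_G(\theta,\theta')
\]
for every pair $(\omega,\omega')$ differing on an edge in $E(L)$. Summing over $\theta\in S_\star$, $\theta'\in S_\star^c$ and then over $\omega^c\in\mathcal R^{\xi(G)}(M)$, and using the crude estimate $|E_G|/|E(\L_n)|\le 1/(pn)\le 1/p$ together with absorbing constants into $q^{2M+1}$, gives $Q_{\rm int}\le \tfrac{q^{2M+1}}{p}Q_G(S_\star,S_\star^c)$, as required.

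\textbf{Main obstacle.} The delicate step is the Type-(II) bound: the combinatorial enumeration of minimal witnesses for $|\mathcal S^{\xi(G)}|>M$ must be tight enough that the bound $e^{-\Omega(M\log n)}$ holds under only $\alpha>1/3$. Both the minimum path length $3$ between distinct $\xi_i$'s and the planarity-induced structure of the witnessing forest are essential; using a naive union bound over all $3M/2$-edge subsets of $E^c(L)$ instead of over witnesses rooted at $L$ would give a worse threshold and not match the required exponent.
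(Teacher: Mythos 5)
Your proposal follows the same overall decomposition as the paper's proof (into the single-edge $E(L)$ moves and the full $E^c(L)$ resampling moves) and uses the same key ingredients, so it is essentially the same argument; but there are two points worth flagging. For $Q_{\rm ext}$, the paper invokes a separately-proved tail estimate (Lemma~\ref{lemma:lb:gadget-tail:sketch}): it dominates the random-cluster marginal on $E^c(L)$ by $p$-percolation, partitions the indicators $Z_j$ into four classes of mutually independent variables, and applies a Chernoff bound. You propose a direct union bound over ``witness'' subgraphs instead. The underlying ingredients (percolation domination, $\alpha>1/3$, and the distance-$3$ separation of the $\xi_i$'s in $(\L_n,E^c(L))$) are the same, but your stated count of $\ge 3M/2$ distinct open edges does not follow as written: the $\ge M/2$ paths from the spanning-forest argument can share edges, so a correct accounting via a Steiner-tree lower bound (any tree spanning $k$ terminals pairwise at distance $\ge 3$ has $\ge 3(k-1)/2$ edges) gives only $\ge 3M/4$ distinct edges — which still yields $\e^{-\Omega(M\log n)}$, so the conclusion stands, but the constant and the justification need tightening. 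For $Q_{\rm int}$, the paper writes $P_{\MHB}(\omega,\omega') \le (p^+/p^-)(|E_G|/|E(\L_n)|)\,P_G(\theta,\theta')$ and then applies Lemma~\ref{lemma:simple-rc-bound} to get $\pi^{\xi(G)}(\theta\mid\omega^c)\le q^{2M}\pi_G(\theta)$; you instead use the heat-bath identity together with a direct partition-function comparison giving the (tighter) two-sided bound $\pi^{\xi(G)}(\theta\mid\omega^c)\in[q^{-M},q^M]\pi_G(\theta)$, which on plugging in yields $q^{O(M)}/p$ rather than the claim's exact $q^{2M+1}/p$; this is immaterial for Theorem~\ref{thm:lb:general}, where only $\e^{O(M)}/p$ is used.
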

	
	Dividing the bound from Claim \ref{claim:am-edge-messaure:sketch} by $\pi^{\xi(G)}(A_M)$ and using the bounds from Claim \ref{claim:am-prob-facts:sketch}, we see that
	\begin{align*}
		\frac {\QMHB(A_M, A_M^c)}{\pi^{\xi(G)}(A_M)} 
		\le {\e}^{-\Omega(M \log n)} + \frac{2q^{3M+1}}{p} \frac{Q_G(S_\star,S_\star^c)}{ \pi_G(S_\star)} 
		\le  {\e}^{-\Omega(M \log n)}  + {\e}^{O(M)}{\e}^{-\Omega(|V_G|)}\,.
	\end{align*}
	for sufficiently large $M$, where the last inequality follows from (\ref{eq:s-star-conductance:sketch}) and the facts that $M = \delta |V_G|$ and $p \ge \lambda {|V_G|}^{-2\alpha}$. Similarly, we get
	\begin{align*}
		\frac {\QMHB(A_M, A_M^c)}{\pi^{\xi(G)}(A_M^c)} \le {\e}^{-\Omega(M \log n)}  + {\e}^{O(M)}{\e}^{-\Omega(|V_G|)}\,.
	\end{align*}
	Then, since $M = \delta |V_G|$, for some $\delta>0$ sufficiently small we obtain~\eqref{eq:lb:conductance}. 
\end{proof}

\subsection{Proof of  auxiliary facts}\label{subsec:proof-auxiliary}
In this section we provide the proofs of Lemma~\ref{lemma:gap-comparison:sketch}, and Claims~\ref{claim:am-prob-facts:sketch} and~\ref{claim:am-edge-messaure:sketch}.

\begin{proof}[Proof of Lemma~\ref{lemma:gap-comparison:sketch}]
	For any $B \subset E(\L_n)$, let $P_B$ be the transition matrix corresponding to a heat-bath update on the entire set $B$.
	For $e \in E(\L_n)$, we use $P_e$ for $P_{\{e\}}$.
	Let $\PHB$ and $\PMHB$ be the transition matrices for the FK-dynamics and the MHB dynamics on $\L_n^\xi$, respectively. Let $A = E(\L_n) \setminus E(L)$. Then, 
	$$
	\PMHB = \frac{1}{|E(\L_n)|}\bigg(\sum_{e \in E(L)} P_e + \sum_{e \in A} P_A\bigg).
	$$
	For ease of notation, set $\pi = \pi_{\L_n,p,q}^\xi$. Then,
	for any $f,g \in \R^{|\Omega|}$, where 
	$\Omega$ denotes the set of FK configurations on $\L_n$, 
	let 
	$$
	\inner{f}{g}{\pi} = \sum_{\omega \in \Omega} f(\omega)g(\omega) \pi(\omega)\,.
	$$
	If we endow $\R^{|\Omega|}$ with the inner product $\inner{\cdot}{\cdot}{\pi}$, we obtain a Hilbert space denoted $L_2(\pi) = (\R^{|\Omega|},\inner{\cdot}{\cdot}{\pi})$.
	Recall, that if a matrix $P$ is reversible with respect to $\pi$, it defines a \textit{self-adjoint} operator from $L_2(\pi)$ to $L_2(\pi)$ via matrix vector multiplication, and thus
	$\inner{f}{Pg}{\pi} = \inner{P^*f}{g}{\pi} = \inner{Pf}{g}{\pi}$.
	
	Now, the matrix $\PMHB$ is positive semidefinite, since it is an average of positive semidefinite matrices. Hence, it is a standard fact (see, e.g., \cite{LP})
	that if	
	$\inner{f}{\PMHB f}{\pi} \le \inner{f}{\PHB f}{\pi}$ 
	for all $f \in \R^{|\Omega|}$, then 
	$
	\gap_{\MHB}(\L_n^\xi) \ge \gap(\L_n^\xi).
	$
	To show this, note that
	for $e \in A$, we have $P_A = P_e P_A P_e$. Thus, for all $f \in \R^{|\Omega|}$,
	\begin{align*}
	\inner{f}{P_Af}{\pi} = \inner{f}{P_e P_A P_e f}{\pi} = \inner{P_e^*f}{P_A P_e f}{\pi} &= \inner{P_e f}{P_A P_e f}{\pi} \\
	&\le \inner{P_e f}{P_e f}{\pi} = \inner{f}{P_e^2f}{\pi}  = \inner{f}{P_ef}{\pi}\,,
	\end{align*}
	where we used that $P_e = P_e^*$, $P_e = P_e^2$ and that $\inner{f}{P f}{\pi} \le \inner{f}{f}{\pi}$  
	for every $f \in \R^{|\Omega|}$ and every matrix $P$ reversible with respect to $\pi$.
	Then,
	$$
	\inner{f}{\PMHB f}{\pi} \le \frac{1}{|E(\L_n)|} \sum_{e \in E(\L_n)} \inner{f}{P_e f}{\pi} = \inner{f}{\PHB f}{\pi},
	$$
	and the result follows.
\end{proof}

Recall the notation of Lemma~\ref{thm:lb:general}.
 In order to compare the marginal distribution in $L$ to $\pi_G$, we need to bound the number of connections in $E^c(L)$ between different boundary components of $\xi(G)$ restricted to $L$: this will show that typical FK configurations on $E^c(L)$ do not have much influence
on the connectivities amongst $L$. This bound follows from the fact that $p=O(n^{-\alpha})$ for some $\alpha > 1/3$ and the (approximate) independence of connections between $\xi_i$ and $\xi_j$.
For the reminder of this section we set $\xi=\xi(G)$ for ease of notation.
 Claims~\ref{claim:am-prob-facts:sketch}--\ref{claim:am-edge-messaure:sketch} will then be seen as consequences of the following lemma.

\begin{lemma}
	\label{lemma:lb:gadget-tail:sketch}
	Let $q\geq 1$ and let $p =  \lambda n^{-\alpha}$ for $\lambda>0$ and $\alpha > 1/3$.
	Let $\xi$ be any boundary condition on $\bt\L_n$
	and let $\eta$ be any FK configuration on $E(L)$. 
	For every $M\ge 1$,
	\begin{align*}
		\pi_{\Lambda_n}^{\xi} \big(\mathcal{R}^{\xi}(M) \mid \eta \big)  \ge 1- \exp\left[-\Omega \left(M \log [M n^{3\alpha -1}]\right)\right]\,.
	\end{align*}
\end{lemma}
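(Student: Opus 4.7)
My plan is a three-step reduction to a Peierls-type enumeration under Bernoulli percolation.

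First, I reduce to Bernoulli$(p)$ percolation. Conditional on $\eta$ and the boundary condition $\xi$, the marginal law of $\omega$ on $E^c(L):=E(\Lambda_n)\setminus E(L)$ is itself a random-cluster measure on the subgraph induced by $E^c(L)$ with boundary connections absorbing $\xi$ and $\eta$. Since $q\ge 1$, its single-edge conditional probabilities are each at most $p$, so it is stochastically dominated by independent Bernoulli$(p)$ percolation, which I denote by $\mu_p$. Because $\{|\mathcal{S}^\xi(\omega)|>M\}$ depends only on $\omega|_{E^c(L)}$ and is monotone increasing, it suffices to establish the bound under $\mu_p$.

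Second, I establish the combinatorial statement that whenever $|\mathcal{S}^\xi(\omega)|\ge M$, the configuration $\omega$ supports at least $\lceil M/4\rceil$ pairwise edge-disjoint open paths in $E^c(L)$, each connecting two vertices of $L$ lying in distinct $\xi$-classes. To produce them, consider the ``useful'' connected components of $\omega|_{E^c(L)}$, namely those meeting vertices from at least two distinct $\xi_i$'s. For each useful cluster $C$ containing representatives of $N_C\ge 2$ distinct $\xi$-classes, pick one $L$-vertex per class and apply the classical $T$-join theorem to the resulting marked set (dropping one vertex if needed to make its cardinality even). This yields $\lfloor N_C/2\rfloor$ edge-disjoint paths within $C$, each pairing two distinct $\xi$-classes. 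Paths from different clusters are automatically edge-disjoint, and since each of the at most $M/2$ useful clusters contributes $N_C\ge 2$ non-isolated classes, summing gives at least $(M-M/2)/2 = M/4$ such paths.

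Third, a factorial-moment estimate concludes. Let $Z_s$ denote the number of unordered $s$-element collections of pairwise edge-disjoint open self-avoiding paths in $\omega|_{E^c(L)}$ connecting distinct $\xi$-classes. The combinatorial step gives $\{|\mathcal{S}^\xi|\ge M\}\subseteq\{Z_{\lceil M/4\rceil}\ge 1\}$, so by Markov's inequality and an unordered-to-ordered overcounting estimate,
\[
\mu_p(|\mathcal{S}^\xi|\ge M) \;\le\; \E_{\mu_p}\!\bigl[Z_{\lceil M/4\rceil}\bigr] \;\le\; \frac{1}{\lceil M/4\rceil!}\Bigl(\sum_P p^{|P|}\Bigr)^{\!\lceil M/4\rceil},
\]
where the inner sum ranges over self-avoiding walks $P$ in $E^c(L)$ with endpoints in distinct $\xi$-classes. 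Each such $P$ has length $\ge 3$ (any two vertices of $L$ are at distance $\ge 3$ in $E^c(L)$), and combining the elementary bound $\#\{\text{SAWs of length }\ell\text{ from a fixed vertex}\}\le 4\cdot 3^{\ell-1}$ with a summation over starting vertex $x\in L$ and lengths $\ell\ge 3$ yields $\sum_P p^{|P|} \le C|L|p^3 = O(n^{1-3\alpha})$. Applying Stirling's bound $s!\ge(s/e)^s$ then gives
\[
\mu_p(|\mathcal{S}^\xi|\ge M) \;\le\; \Bigl(\frac{Cn^{1-3\alpha}}{M}\Bigr)^{\!M/4} \;=\; \exp\bigl(-\Omega\bigl(M\log(Mn^{3\alpha-1})\bigr)\bigr),
\]
which is exactly the bound asserted by the lemma. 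The main obstacle is the combinatorial step: a naive application of the van den Berg--Kesten inequality to the events $\{\xi_i\text{ is non-isolated}\}$ fails because multiple such events may be jointly witnessed by a single open cluster with no edge-disjoint witnesses. The $T$-join theorem is the right tool to systematically extract $\Omega(M)$ edge-disjoint witnessing paths from within each useful cluster, after which the tail estimate is a routine factorial-moment enumeration.
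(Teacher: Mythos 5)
Your proof is correct but takes a genuinely different route from the paper's. The paper first dominates $|\mathcal{S}^\xi(\omega)|$ by the simpler random variable $Y$ counting $L$-vertices connected to some other $L$-vertex, then further by $Z$ counting $L$-vertices connected to something at distance~3; it then exploits the explicit $4$-periodic spacing of $L$ to split $Z$ into four sums $\hat Z_0,\dots,\hat Z_3$ whose individual summands depend on pairwise disjoint edge sets, are hence i.i.d.\ under Bernoulli$(p)$ percolation, and are handled by Chernoff--Hoeffding. Your argument instead extracts $\Omega(M)$ pairwise edge-disjoint open SAWs from $\{|\mathcal{S}^\xi|\ge M\}$ via a $T$-join decomposition within each useful cluster, and then runs a Peierls-style factorial-moment enumeration. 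Both give the same tail $\exp(-\Omega(M\log(Mn^{3\alpha-1})))$. The paper's route is arguably more elementary (only Chernoff is used), but it leans on the specific geometry of the spaced set $L$; your route needs only the minimum $E^c(L)$-distance~$3$ between $L$-vertices, so it is more robust to how $G$ is embedded. One small inaccuracy in your write-up: the number of useful clusters need not be at most $M/2$ (a class can meet several clusters), so the sentence ``each of the at most $M/2$ useful clusters\dots'' is not literally justified. The bound you want still follows: if $k$ is the number of useful clusters and $N_C\ge 2$ their class-counts, then $\sum_C N_C\ge M$ (every class meets a cluster), $k\le\tfrac12\sum_C N_C$, and
\[
\sum_C\Big\lfloor\tfrac{N_C}{2}\Big\rfloor\;\ge\;\tfrac12\Big(\sum_C N_C-k\Big)\;\ge\;\tfrac14\sum_C N_C\;\ge\;\tfrac{M}{4}\,,
\]
which is exactly what you use downstream.
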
 

\begin{proof}
	Let $Y$ be the random variable for the number 
	of vertices of $L$ 
	connected to at least one other vertex of $L$ 
	in an FK configuration on $E^c(L)$ 
	sampled from the distribution $\pi^{\xi,\eta}(\cdot)  = \pi_{\Lambda_n,p,q}^{\xi} (\cdot \mid \eta )$. It is sufficient to show 
	that
	$$
	\pi^{\xi,\eta}(Y \ge M) \le \exp\big({-\Omega \left(M \log [M n^{3\alpha -1}]\right)}\big).
	$$
	By classical comparison inequalities (see e.g.,~\cite{Grimmett}), when $q \ge 1$, no matter the boundary conditions $\xi,\eta$, the random-cluster measure on $E^c(L)$ is stochastically dominated by the independent bond percolation distribution on $E^c(L)$ with the same parameter $p$, which we denote by $\nu = \nu_{\L_n,p}$. (Recall that $\nu$ is the distribution on $\Omega$ that results from adding every edge in $E(\L_n)$ independently with probability $p$.)
	Hence, if $X$ is defined as $Y$ but for $\nu$ on $E^c(L)$, we get
	$$
	\pi^{\xi,\eta}(Y \ge M) \le \nu(X \ge M). 
	$$
	
	Consider the subgraph $\hat{\L} = (\L_n,E^c(L))$.
	Let $Z$ be the total number of vertices in $L$ that are connected in a configuration sampled from $\nu$ to another vertex at distance $3$ in $\hat{\L}$. 
	Then, since the distance between any two vertices in $L$ in $\hat \Lambda$ is at least 3, we see that $X \le Z$ and
	$$
	\nu(X \ge M) \le \nu (Z \ge M). 
	$$
	Thus, it suffices to establish a tail bound for $Z$. Enumerate the vertices of $L$ as $v_1,\dots,v_{2m}$ and let $Z_j$ 
	be the indicator random variable for the event that $v_j$ is connected to another vertex at distance $3$ in $\hat{\L}$.
	For $r=0,1,2,3$, split up $Z = \sum_r \hat Z_r$, where 
	$$
	\hat{Z}_r = \sum_{i \ge 0} Z_{4i+r+1}\,.
	$$
	We claim that there is some suitable $c>0$ such that under $\nu$, each $\hat{Z}_r$ is stochastically dominated by the binomial random variable $S \sim \textrm{Bin}(\lceil n/4\rceil,cn^{-3\alpha})$. This is because the random variables $Z_{4i+r+1}$ are jointly dominated by independent 
	Bernoulli random variables, $\textrm{Ber}(cn^{-3\alpha})$, for a suitable $c > 0$,
	since for every $k$ the events $\{Z_k = 1\}$ and $\{Z_{k+4} = 1\}$
	depend on disjoint sets of edges. The fact that the success probability of each one is at most $cn^{-3\alpha}$ follows from the fact that there are at most 16 choices of three adjacent edges from a vertex in $\bt\L_n$, and $ p  = \lambda n^{-\alpha}$.
	Hence, by Chernoff--Hoeffding inequality, for every $\delta>0$,
	$$
	\nu(\hat{Z}_r \ge \E_{\nu}[S]+ \delta \lceil n/4\rceil) \le  \exp\left[ - \frac n4 D\Big( cn^{-3\alpha}+\delta \,\| \,cn^{-3\alpha}\Big)\right].
	$$
	where $D(a \|  b)$ is the relative entropy between the Bernoulli random variables $\mbox{Ber}(a)$ and $\mbox{Ber}(b)$:
	\begin{align*}
		D(a\|b) = a\log (a/b) +(1-a)\log ((1-a)/(1-b))\,.
	\end{align*}
	Since $ \E_{\nu}[S] = O(n^{1-3\alpha})$,  $\Var(S) = O(n^{1-3\alpha})$, $\alpha>\frac 13$ and $M \ge 1$, it follows that for every $M \geq 1$,
	$$
	\nu(\hat{Z}_r \ge M/4) \le {\e}^{-\Omega \left(M \log [M n^{3\alpha -1}]\right)}\,.
	$$
	We have $Z = \hat{Z}_1 + \hat{Z}_2 + \hat{Z}_3 + \hat{Z}_4$, and so a union bound implies the matching bound for $\nu(Z\geq M)$. 
\end{proof}

Now recall the definitions of the set $S_\star$ and $A_M$ from~\eqref{eq:A-M:sketch}.

\begin{proof}[Proof of  Claim \ref{claim:am-prob-facts:sketch}]
	For part (i), observe that if $\omega$ is sampled from $\pi^\xi$, then 
	\begin{align*}
		\pi^\xi(A_M) &  = \pi^\xi \big(\omega (L) \in S_\star \mid  \omega(E^c(L)) \in \mathcal{R}^\xi(M)\big)  \pi^\xi \big(\omega(E^c(L)) \in \mathcal{R}^\xi(M)\big)\,. 
	\end{align*}
	By Lemma \ref{lemma:lb:gadget-tail:sketch},
	$$
	\pi^{\xi}(\omega(E^c(L)) \in \mathcal{R}^\xi(M)) \ge 1-{\e}^{-\Omega (M)}\,.
	$$
	Moreover, since 
	$$
	\pi^\xi(\omega (L) \in S_\star \mid  \omega(E^c(L) = 0)) =  \pi_G(S_\star)\,,
	$$
	it follows from Lemma \ref{lemma:simple-rc-bound} that
	$$
	\pi^{\xi}(\omega (L) \in S_\star \mid  \omega(E^c(L)) \in \mathcal{R}^\xi(M)) \ge q^{-M} {\pi_G(S_\star)}\,,
	$$
	and thus, 
	$$\pi^\xi(A_M)  \geq q^{-M}({1-{\e}^{-\Omega (M)}}) \pi_G(S_\star).$$ 
	Similarly for part (ii), we have 
	\begin{align*}
		\pi^\xi(A_M^c) 
		&\ge  \pi^\xi \big(\omega (L) \not\in S_\star \mid  \omega(E^c(L)) \in \mathcal{R}^\xi(M)\big) 	\pi^\xi(\omega(E^c(L)) \in \mathcal{R}^\xi(M)) \\
		&\ge  q^{-M} (1-{\e}^{-\Omega(M)}) {\pi_G(S_\star^c)}
	\end{align*}
	which is at least $e^{-O(M)}$ since $\pi_G(S_\star) \leq \frac 12$.
\end{proof}

\begin{proof}[Proof of Claim \ref{claim:am-edge-messaure:sketch}]
	Let $\PMHB$ be the transition matrix for the MHB dynamics and for ease of notation set $B = E^c(L)$. We have%
	\begin{align}
		\QMHB(A_M,A_M^c) 
		& \leq \sum_{\omega \in A_M} \sum_{\substack{\omega' \in \Omega: \\ \omega'(B) \notin \mathcal{R}^\xi(M)}} \pi^\xi(\omega) \PMHB(\omega, \omega') + \sum_{\omega \in A_M} \sum_{\substack{\omega' \in \Omega: \\ \omega'(L) \notin S_\star}} \pi^\xi(\omega) \PMHB(\omega, \omega')\,. \label{eq:edge-measure-decomp}
	\end{align}
	For the first term in (\ref{eq:edge-measure-decomp}), observe by definition of MHB dynamics, for every $\omega \in A_M$
	$$
	\sum_{\substack{\omega' \in \Omega: \\ \omega'(B) \notin \mathcal{R}^\xi(M)}} \PMHB(\omega,\omega') \le \sum_{\substack{\omega' \in \Omega: \\ \omega'(B) \notin \mathcal{R}^\xi(M)}} \pi^\xi\big(\omega'(B) \mid \omega(L)\big) \le {\e}^{-\Omega(M \log n)},
	$$
	where the last inequality follows from Lemma~\ref{lemma:lb:gadget-tail:sketch}.
	Hence,
	$$
	\sum_{\omega \in A_M} \sum_{\substack{\omega' \in \Omega: \\ \omega'(B) \notin \mathcal{R}^\xi(M)}} \pi ^\xi(\omega) \PMHB(\omega, \omega') \le \pi^{\xi}(A_M)  {\e}^{-\Omega(M \log n)}.
	$$
	
	For the second term in (\ref{eq:edge-measure-decomp}), 
	observe that $\omega \neq \omega'$
	and that $\omega$ and $\omega'$ can differ in at most one edge $e$; otherwise  $\PMHB(\omega, \omega') = 0$. 
	Thus, setting 
	$$
	p^+(q) = \max\left\{p,\frac{q(1-p)}{q(1-p)+p}\right\},\quad p^-(q) = \min\left\{1-p,\frac{p}{q(1-p)+p}\right\},
	$$
	we obtain
	\begin{align*}
		\PMHB(\omega, \omega') &= \frac{1}{|E(\L_n)|} \pi\big(\omega'(e)\mid \omega(E(\L_n) \setminus \{e\})\big) \le \frac{p^+(q)}{|E(\L_n)|} \le  \frac{p^+(q)|E_G|}{p^-(q)|E(\L_n)|} P_G(\omega (L), \omega' (L))\,.
	\end{align*}
	Then, since $|E_G|\leq |E(\L_n)|$ and $p^+(q)/p^-(q) \le q/p$,
	\begin{align*}
		\sum_{\omega \in A_M} \sum_{\substack{\omega' \in \Omega: \\ \omega'(L) \notin S_\star}} \pi^\xi (\omega) \PMHB(\omega, \omega') 
		&\le 	\frac{q}{p}\sum_{\omega \in A_M} \sum_{\substack{\omega' \in \Omega: \\ \omega'(L) \notin S_\star}} \pi^\xi (\omega) P_G(\omega (L), \omega' (L)) \notag\\
		&\le \frac{q}{p}\sum_{\theta \in \mathcal{R}^\xi(M)} \pi^\xi(\theta) \sum_{\omega_1  \in S_\star}\sum_{\omega_2\not\in S_\star} \pi^\xi(\omega_1 \mid \theta) P_G(\omega_1, \omega_2) \\ 
		& \le \frac{q}{p} \pi^\xi(\mathcal{R}^\xi(M)) \sum_{\omega_1  \in S_\star}\sum_{\omega_2\not\in S_\star} \max_{\theta \in \mathcal{R}^\xi(M)} \pi^\xi(\omega_1 \mid \theta) P_G(\omega_1, \omega_2),
	\end{align*}
	where $\omega_1,\omega_2$ 
	are FK configurations on $E(L)$ 
	and $\theta$ is an FK configuration on $B$.
	Lemma~\ref{lemma:simple-rc-bound} implies 	$$\max_{\theta \in \mathcal{R}^\xi(M)} \pi^\xi(\omega_1 \mid \theta) \le q^{2M} \pi_G(\omega_1),$$
	and so
	\begin{align*}
		\sum_{\omega \in A_M} \sum_{\substack{\omega' \in \Omega: \\ \omega'(L) \notin S_\star}} \pi^\xi(\omega) \PMHB(\omega, \omega') 
		&\le \frac{q^{2M+1}}{p} \sum_{\omega_1  \in S_\star}\sum_{\omega_2\not\in S_\star} \pi_G(\omega_1) P_G(\omega_1, \omega_2) = \frac{q^{2M+1}}{p} Q_G(S_\star,S_\star^c).
	\end{align*}
	Combining these two bounds, we get
	\begin{equation*}
	\QMHB(A_M, A_M^c) \le \pi^{\xi}(A_M)  {\e}^{-\Omega(M \log n)} + \frac{q^{2M+1}}{p} Q_G(S_\star,S_\star^c)\,. \hfill \qedhere
	\end{equation*}
\end{proof}

\end{document}